\documentclass[11pt,a4paper]{article}

\usepackage[T1]{fontenc}
\usepackage{lmodern}
\usepackage{microtype}
\usepackage{mathrsfs,mathtools,amsthm,amssymb,bm}
\usepackage{aliascnt}
\allowdisplaybreaks[2]
\usepackage{graphicx,xcolor,tikz}
\usetikzlibrary{calc,positioning,fit}
\usepackage[left=25mm,right=25mm,top=25mm,bottom=25mm]{geometry}
\usepackage[hang,flushmargin]{footmisc}

\usepackage[inline]{enumitem}
\usepackage[square,numbers,sort&compress]{natbib}
\usepackage{algorithm}
\usepackage[noend]{algpseudocode}
\usepackage[hidelinks,
  pdftitle={Extensions of Erdos's 1962 theorem on non-Hamiltonian graphs},
  pdfauthor={Xu Liu, Bo Ning, and Tao Wang},
  pdfsubject={Extremal Hamiltonian graph theory and feasible graph parameters},
  pdfkeywords={Hamilton cycle, Hamilton path, Hamilton-connected graph, feasible parameter, spectral radius, Kelmans operation}]{hyperref}
\usepackage[capitalise,noabbrev]{cleveref}

\newtheorem{theorem}{Theorem}[section]
\newaliascnt{lemma}{theorem}
\newtheorem{lemma}[lemma]{Lemma}
\aliascntresetthe{lemma}
\newaliascnt{corollary}{theorem}
\newtheorem{corollary}[corollary]{Corollary}
\aliascntresetthe{corollary}
\newtheorem{claim}{Claim}

\theoremstyle{definition}
\newtheorem{definition}{Definition}
\newtheorem{remark}{Remark}
\newtheorem{problem}{Problem}

\crefname{claim}{Claim}{Claims}
\crefname{lemma}{Lemma}{Lemmas}
\crefname{corollary}{Corollary}{Corollaries}
\crefname{problem}{Problem}{Problems}
\crefname{conjecture}{Conjecture}{Conjectures}

\makeatletter
\renewcommand*{\theHALG@line}{\thealgorithm.\arabic{ALG@line}}
\makeatother

\DeclareMathOperator{\ex}{\mathsf{ex}}
\DeclareMathOperator{\EX}{\mathsf{EX}}
\DeclareMathOperator{\spex}{\mathsf{spex}}
\DeclareMathOperator{\SPEX}{\mathsf{SPEX}}
\newcommand{\HC}{\mathsf{HC}}
\newcommand{\cl}{\operatorname{cl}}

\begin{document}

\title{Extensions of Erd\H{o}s's 1962 theorem on non-Hamiltonian graphs}
\author{
Xu Liu\thanks{College of Computer Science, Nankai University, Tianjin 300350, P.~R. China.}
\and
Bo Ning\thanks{Corresponding author. College of Computer Science, Nankai University, Tianjin 300350, P.~R. China. \nolinkurl{Email:bo.ning@nankai.edu.cn}. Partially supported by the National Natural Science Foundation of China (No. 12371350).~\href{https://orcid.org/0000-0002-9622-5567}{ORCID: 0000-0002-9622-5567}.}
\and
Tao Wang\thanks{Center for Applied Mathematics, Henan University, Kaifeng 475004, P.~R. China. \nolinkurl{Email:wangtao@henu.edu.cn}.\href{https://orcid.org/0000-0001-9732-1617}{ORCID: 0000-0001-9732-1617}.}}
\date{}
\maketitle

\begin{abstract}
For a positive integer $k$, a graph property $\mathcal{H}$, and a graph parameter $\mathcal{P}$, let $\operatorname{ex}_{\mathcal{P}}(n, \mathcal{H}; \delta \geq k)$ denote the maximum value of $\mathcal{P}$ over all $n$-vertex graphs with minimum degree at least $k$ that do not possess the property $\mathcal{H}$. The corresponding extremal families are denoted by $\operatorname{EX}_{\mathcal{P}}(n, \mathcal{H}; \delta \geq k)$. For two disjoint graphs $H_1$ and $H_2$, let $H_1 \cup H_2$ denote their disjoint union, and let $H_1 \vee H_2$ denote their join.

In 1962, Erd\H{o}s established a classical theorem on the maximum number of edges in a non-Hamiltonian graph with prescribed order and minimum degree. Motivated by recent work on feasible graph parameters in \cite{ALNS2025}, we prove several extensions of Erd\H{o}s's 1962 theorem on non-Hamiltonian graphs: 
\begin{enumerate}[label = (\roman*)]
\item For $k \ge 1$, $n \geq 2k + 1$ and a feasible parameter $\mathcal{P}$, we have
$\ex_{\mathcal{P}}(n, C_n; \delta \geq k) = \max\{\mathcal{P}(K_s \vee (sK_1 \cup K_{n-2s})): k \leq s \leq \lfloor\tfrac{n-1}{2}\rfloor\}$,
and
$\EX_{\mathcal{P}}(n, C_n; \delta \geq k) \subseteq \{K_s \vee (sK_1 \cup K_{n-2s}): k \leq s \leq \lfloor\tfrac{n-1}{2}\rfloor\}$.

\item For $k \ge 1$, $n \geq 2k + 2$ and a feasible parameter $\mathcal{P}$, we have
$
\ex_{\mathcal{P}}(n, P_n; \delta \ge k) = \max\{\mathcal{P}(K_{s-1} \vee (sK_1 \cup K_{n-2s+1})): k+1 \leq s \leq \lfloor\tfrac{n}{2}\rfloor\},
$
and
$
\EX_{\mathcal{P}}(n, P_n; \delta \ge k) \subseteq \{K_{s-1} \vee (sK_1 \cup K_{n-2s+1}): k+1 \leq s \leq \lfloor\tfrac{n}{2}\rfloor\}.
$
\end{enumerate}
The first result gives a common generalization of the extremal theorem due to Erd\H{o}s and its spectral analogues. As direct applications, we obtain complete solutions to open problems raised in the literature since 2016, thereby improving nearly all related prior results in this direction. Our proof technique differs somewhat from those in \cite{LN2016,N2016}. 
We also prove an analogous theorem for the Hamiltonian-connected graphs and obtain a result which extends a theorem of F\"{u}redi, Kostochka, and Luo \cite{FKL2018} on Hamilton cycles. Compared with \cite{ALNS2025}, our approach overcomes the difficulty that the Kelmans operation may change the minimum degree of a graph.
\end{abstract}

%-----------------------------------------------------------------------
\section{Introduction}

A \emph{Hamilton cycle} in a graph is a spanning cycle, that is, a cycle that passes through every vertex exactly once. The Hamilton cycle problem asks whether a given graph contains a Hamilton cycle. The subject has a classical connection with the Four Color Problem: Tait observed that the Four Color Theorem would follow if every 3-connected cubic planar graph were Hamiltonian, an assertion now known as the Tait conjecture; see, for example, \cite[Chapter~10]{BM2008}. It is also computationally fundamental, since the directed and undirected Hamilton circuit problems appear in Karp's classical list of 21 NP-complete problems \cite{Karp1972}. Degree conditions are among the central tools in the study of Hamiltonicity. One of the classical cornerstones of this type is Dirac’s theorem \cite{D1952}, which states that every graph $G$ of order $n\ge3$ with $\delta(G)\ge n/2$ contains a Hamilton cycle.

In the extremal setting, Ore's theorem \cite{O1960} on Hamilton cycles implies that $\operatorname{ex}(n,C_n)=\binom{n-1}{2}+1$. By incorporating the minimum degree as a new parameter, Erd\H{o}s \cite{E1962} proved the following fundamental result in 1962.

\begin{theorem}[Erd\H{o}s \cite{E1962}]\label{thm:erdos1962}
Let $k$ and $n$ be positive integers with $1\le k\le \lfloor(n-1)/2\rfloor$, and let $G$ be a graph of order $n$ with $\delta(G)\ge k$. If
\[
e(G)>\max\left\{\binom{n-k}{2}+k^2,
\binom{n-\lfloor(n-1)/2\rfloor}{2}+\left\lfloor\frac{n-1}{2}\right\rfloor^2\right\},
\]
then $G$ contains a Hamilton cycle.
\end{theorem}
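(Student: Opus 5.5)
We prove the contrapositive with the Bondy--Chv\'atal closure together with the degree-sequence criterion of P\'osa/Chv\'atal. Let $G$ be a non-Hamiltonian graph of order $n$ with $\delta(G)\ge k$. Let $H=\cl(G)$ be its closure, obtained by repeatedly joining nonadjacent vertices whose degree sum is at least $n$. Then $H$ is non-Hamiltonian, $\delta(H)\ge k$, and $e(H)\ge e(G)$. So it suffices to bound $e(H)$. Moreover, in $H$ any two nonadjacent vertices have degree sum at most $n-1$.

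\textbf{Step 1 (a P\'osa-type obstruction).} Since $H$ is closed and not complete ($K_n$ is Hamiltonian), Chv\'atal's degree-sequence theorem applies. Let $d_1\le\cdots\le d_n$ be the degree sequence of $H$. Chv\'atal's theorem gives an $s<n/2$ with $d_s\le s$ and $d_{n-s}<n-s$. Since $\delta(H)\ge k$, we get $k\le s\le\lfloor(n-1)/2\rfloor$.

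\textbf{Step 2 (structure from the closure).} Let $S$ be a set of $s$ vertices of degree at most $s$. In the closed graph $H$, the vertices of degree at least $n-s$ form a clique, and every such vertex is adjacent to every vertex of degree at least $s$. Rather than tracking this structure exactly, I would bound edges by degree counting. Write
\[
e(H)=\sum_{v\in S}\deg(v)+e(H-S)\le s\cdot s+\binom{n-s}{2},
\]
where the first term counts edges incident to $S$ (with edges inside $S$ counted at most twice, which only helps), and $e(H-S)\le\binom{n-s}{2}$ trivially. This gives
\[
e(G)\le e(H)\le f(s):=\binom{n-s}{2}+s^2 .
\]
This is the only place where care is needed. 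If $\sum_{v\in S}\deg(v)$ counts edges inside $S$ twice, the inequality $e(H)\le \sum_{v\in S}\deg(v)+e(H-S)$ still holds, since it overcounts. So the bound is valid without using the full Chv\'atal structure; the conditions $d_s\le s$ and $s\ge k$ are all that matter.

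\textbf{Step 3 (convexity).} Extending $f(s)=\binom{n-s}{2}+s^2$ to real $s$, it is a quadratic in $s$ with positive leading coefficient $3/2$, hence convex. Its maximum over the integer interval $[k,\lfloor(n-1)/2\rfloor]$ is therefore attained at an endpoint. This gives $e(G)\le\max\{f(k),f(\lfloor(n-1)/2\rfloor)\}$, contradicting the hypothesis of Theorem~\ref{thm:erdos1962}.

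The main obstacle is Step 1: obtaining an $s$ with $d_s\le s$ and $s\ge k$ from non-Hamiltonicity. That is exactly Chv\'atal's theorem, which I would invoke as a known result. If a self-contained proof is preferred, I would instead take a longest path in the closure and run P\'osa's rotation argument. Steps 2 and 3 are routine.
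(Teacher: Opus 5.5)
Your proof is correct, and it has the same skeleton as the paper's. Both start from a P\'osa-type set $S$ of $s$ vertices of degree at most $s$ with $k\le s\le\lfloor(n-1)/2\rfloor$; this is exactly the paper's \cref{thm:posa}, which you extract from Chv\'atal's theorem. Both then bound $e(G)\le\binom{n-s}{2}+s^2=e(H_{n,s})$, and finish by discrete convexity (the case $r=2$ of \eqref{eq:convexity}, where the second difference is $3$).

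The middle step is where you differ. The paper gets Erd\H{o}s's theorem as the case $\mathcal P=e$ of \cref{ex-P-CP}(i), with the bound supplied by \cref{lem:compression}. That lemma runs \cref{alg:independent,alg:concentrate}: Kelmans operations that make $S$ independent and concentrate all neighbourhoods of $S$ in the first $s$ vertices of $T$. It then adds edges to reach $H_{n,s}$.

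You count directly instead: every edge either meets $S$ or lies in $H-S$, so $e\le\sum_{v\in S}d(v)+\binom{n-s}{2}\le s^2+\binom{n-s}{2}$. For the edge count this is all you need, since Kelmans operations preserve $e$ anyway. The compression machinery is needed only for parameters such as $\rho$, $q$ and $\rho_\alpha$, where no such count is available. In the paper it also drives the characterization of the extremal graphs, which your argument does not give, but the statement does not ask for it.

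Two small remarks. First, the displayed ``$=$'' in Step~2 should be ``$\le$'', since the exact identity subtracts $e(H[S])$; you note this yourself. Second, passing to the closure is unnecessary. Chv\'atal's or P\'osa's theorem applies directly to $G$, with $n\ge 3$ guaranteed by $1\le k\le\lfloor(n-1)/2\rfloor$, and the same count bounds $e(G)$.
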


A simplified version of Erd\H{o}s's 1962 theorem appears as an exercise in West's textbook \cite[Exercise~7.2.28]{W2001}: if $G$ is a graph of order $n$ with $\delta(G)\ge k$, $n\ge6k$, and $e(G)>\binom{n-k}{2}+k^2$, then $G$ contains a Hamilton cycle. In 2016, Li and Ning \cite[Lemma~2]{LN2016} studied an extremal problem in spectral graph theory and proved a stability version of Erd\H{o}s's theorem. More precisely, they proved that, for $k\ge1$ and $n\ge6k+5$, if $G$ is a graph of order $n$ with $\delta(G)\ge k$ and $e(G)>\binom{n-k-1}{2}+(k+1)^2$, then $G$ is Hamiltonian unless $G$ is a spanning subgraph of $K_k\vee(kK_1\cup K_{n-2k})$ or $K_1\vee(K_k\cup K_{n-k-1})$. Here, $H_1\cup H_2$ denotes the disjoint union of vertex-disjoint graphs $H_1$ and $H_2$, and $H_1\vee H_2$ denotes their join. A similar stability result was obtained independently by F\"uredi, Kostochka, and Luo \cite{FKL2017}.

A clique version of Erd\H{o}s's 1962 theorem was obtained by F\"uredi, Kostochka, and Luo \cite[Theorem~6]{FKL2018}. We say that a graph is \emph{$\ell$-Hamiltonian} if every linear forest $F\subseteq G$ with $e(F)=\ell$ is contained in a Hamilton cycle. For this class of graphs, F\"uredi, Kostochka, and Luo \cite{FKL2019} proved an analogue of Erd\H{o}s's theorem, together with a stability result for the number of cliques in non-$\ell$-Hamiltonian graphs. Erd\H{o}s-type theorems have also been investigated for special graph families. In particular, Li, Ning, and Peng \cite[Theorem~1.3]{LNP2018} proved a sharp edge condition for Hamiltonicity in 2-connected claw-free graphs with prescribed order and minimum degree, and characterized the exceptional graph at the threshold. We refer to \cite{FKV2016,FKL2017,FKLV2018,FKL2018,LN2023,MY2024} for further stability results concerning Hamiltonian properties, paths, and cycles.

Before stating our main results, we introduce the concept of a \emph{feasible parameter}, whose original definition is only on connected graphs. Ai, Lei, Ning, and Shi \cite{ALNS2025} proved that the number of edges, the spectral radius, and the signless Laplacian spectral radius are feasible parameters for connected graphs.

For $v\in V(G)$, let $N_G(v)$ denote the neighborhood of $v$. The graph operation below goes back to Kelmans \cite{Kelmans1981}. Following the orientation and notation of \cite{ALNS2025}, we let $x,y$ be distinct vertices of $G$, $W=N_G(x)\setminus N_G[y]$, and define
\[
G[x\to y]=G-\{xw:w\in W\}+\{yw:w\in W\},
\tag{1.1}\label{eq:kelmans}
\]
where $N_G[y]=N_G(y)\cup\{y\}$. We call $G[x\to y]$ the graph obtained from $G$ by the \emph{Kelmans operation from $x$ to $y$}. Thus, every private neighbor of $x$ with respect to $y$ is transferred from $x$ to $y$ (after the operation).

\begin{definition}[Ai, Lei, Ning, and Shi \cite{ALNS2025}]\label{def:connected-feasible}
A graph parameter $\mathcal P$ is \emph{feasible on connected graphs} if the following conditions hold for every connected graph $G$:
\begin{enumerate}[label=(P\arabic*)]
\item $\mathcal P(G+xy)>\mathcal P(G)$ for every nonedge $xy$ of $G$;
\item $\mathcal P(G[x\to y])\ge\mathcal P(G)$ for every ordered pair of distinct vertices $x,y\in V(G)$.
\end{enumerate}
\end{definition}

One first contribution of this paper is to extend the previous definition to arbitrary graphs, including disconnected graphs. If $F\subseteq\binom{V(G)}2\setminus E(G)$, then $G+F$ denotes the graph with vertex set $V(G)$ and edge set $E(G)\cup F$.

\begin{definition}\label{def:feasible}
A graph parameter $\mathcal P$ is \emph{feasible} if the following conditions hold for every graph $G$.
\begin{enumerate}[label=(P\arabic*)]
\item $\mathcal P(G+F)>\mathcal P(G)$ for every nonempty set
$F\subseteq\binom{V(G)}2\setminus E(G)$ such that $G+F$ is connected;
\item $\mathcal P(G[x\to y])\ge\mathcal P(G)$ for every ordered pair of distinct vertices $x,y\in V(G)$.
\end{enumerate}
\end{definition}

We shall use two graph operations: addition of non-edges and the Kelmans operation in \eqref{eq:kelmans}. The following two results of Ai, Lei, Ning, and Shi illustrate the feasible-parameter method. The first substantially generalizes Kopylov's theorem \cite{Kopylov1977}, while a consequence of the second resolves an open problem of Nikiforov (see history notes in \cite{ALNS2025}).

\begin{theorem}[Ai, Lei, Ning, and Shi \cite{ALNS2025}]\label{thm:ALNS-cycle}
Let $n\ge k\ge5$, $t=\lfloor(k-1)/2\rfloor$, and $\mathcal P$ be a graph parameter feasible on connected graphs. If $G$ maximizes $\mathcal P$ among all 2-connected graphs of order $n$ containing no cycle of length at least $k$, then
$G\in\left\{K_s\vee\bigl((n-k+s)K_1\cup K_{k-2s}\bigr):2\le s\le t\right\}.$
\end{theorem}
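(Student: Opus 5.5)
We follow the standard route for Kopylov-type theorems, adapted to feasible parameters. Let $G$ maximize $\mathcal P$ among 2-connected $n$-vertex graphs with circumference less than $k$ (so no cycle of length at least $k$).

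\textbf{Step 1: edge-maximality.} By (P1), adding any non-edge to a connected graph strictly increases $\mathcal P$. Adding an edge preserves 2-connectivity, so $G$ must be edge-maximal: for every non-edge $xy$, the graph $G+xy$ has a cycle of length at least $k$. Hence $G$ contains a path of length at least $k-1$ between any two non-adjacent vertices.

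\textbf{Step 2: closure and Kelmans shifting.} Among maximizers we choose $G$ minimizing a potential such as $\sum_v \deg(v)^2$ (or maximizing it), and then apply Kelmans operations $G[x\to y]$. By (P2) these do not decrease $\mathcal P$. It is known that a Kelmans operation does not increase the circumference. Two things must be checked: that 2-connectivity survives, or else that we only shift between vertices where it does, and that by maximality the shifted graph again has circumference less than $k$. Repeated shifting makes the neighborhoods nested, i.e.\ $G$ becomes a threshold-like graph. We would then check that a 2-connected threshold graph with circumference less than $k$ and maximal edges has the form $K_s\vee\bigl((n-k+s)K_1\cup K_{k-2s}\bigr)$. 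The core is a set $S$ of $s$ dominating vertices, the remaining vertices split into independent low-degree vertices of degree exactly $s$ and a clique $K_{k-2s}$, and the longest cycle has length $2s+(k-2s)-1$. The key sub-step uses Kopylov's structure: take a longest path, apply P\'osa rotations, and use the closure with threshold $s$.

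\textbf{Step 3: returning to the original maximizer.} Since the Kelmans steps give only $\ge$, we get $\mathcal P(G)\le\mathcal P(G')$ with $G'$ in the family. We then argue that $G$ itself is in the family. Either the potential-function choice forces $G=G'$, or we argue that if $G\ne G'$ then $G$ is a proper spanning subgraph of a family member $H$ with $\mathcal P(H)>\mathcal P(G)$ by (P1). For the latter, we show the Kelmans-shifted structure pulls back. Finally, $s$ ranges over $2\le s\le t$: we need $s\ge2$ for 2-connectivity and $k-2s\ge1$.

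\textbf{Main obstacle.} We expect the hardest part to be Step 2, namely showing that Kelmans operations preserve 2-connectivity, or circumventing this. Our first attempt is to restrict to operations $G[x\to y]$ with $x,y$ both in the high-degree core, or with $N(x)\setminus N[y]$ small. We would also use that a shift failing 2-connectivity creates a cut vertex, which contradicts maximality via a cycle-length count.
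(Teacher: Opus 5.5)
Step 1 is fine, but Step 2 rests on the claim that a Kelmans operation does not increase the circumference. That claim is false, even for the edge-maximal 2-connected graphs that Step 1 produces.

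Let $G=K_2\vee 3K_2$, with dominating vertices $c_1,c_2$ and matching edges $xx'$, $yy'$, $zz'$, and take $k=7\le n=8$. Every cycle meets at most two components of $G-\{c_1,c_2\}$, so $G$ has no cycle of length at least $7$. Adding any non-edge creates a Hamilton cycle, so $G$ is edge-maximal. In $G[x\to y]$ we have $W=\{x'\}$, the graph is still 2-connected, and $c_1x'yy'c_2zz'c_1$ is a $7$-cycle.

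So neither of your safeguards (small $W$, preserving 2-connectivity) helps. The real obstacle is the circumference, not 2-connectivity. Maximality of $G$ cannot help either: once $G[x\to y]$ leaves the class, $\mathcal P(G[x\to y])\ge\mathcal P(G)$ contradicts nothing. Moreover, $G[x\to y]$ isolates $x$ when $x\not\sim y$ and $N(x)\cap N(y)=\varnothing$. After that, (P2), which is assumed here only for connected graphs, is no longer available.

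The paper only quotes this theorem from Ai, Lei, Ning, and Shi. The feasible-parameter method it uses for its own analogues works the other way round: intermediate graphs never need to lie in the class. First you extract a structural certificate from $G$ itself. In this paper that is P\'{o}sa's low-degree set (\cref{thm:posa}); for long cycles one needs Kopylov-type structural information about edge-maximal 2-connected graphs without long cycles. Then you apply a prescribed sequence of Kelmans operations and edge additions (\cref{alg:independent,alg:concentrate}, \cref{lem:compression}). This sequence ends at a spanning subgraph of one explicit $K_s\vee((n-k+s)K_1\cup K_{k-2s})$. Only that endpoint has to be checked to be 2-connected with circumference $k-1$.

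Your Step 3 leaves the equality part unproved. The theorem concerns every maximizer. Selecting a maximizer that optimizes $\sum_v d(v)^2$ shows at best that some maximizer lies in the family. And ``the Kelmans-shifted structure pulls back'' is exactly what must be shown, since Kelmans operations only give $\ge$.

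In the paper's framework this is a separate rigidity argument. Let $H$ be the target member; then $\mathcal P(G)\le\mathcal P(\Gamma^*)\le\mathcal P(H)\le\mathcal P(G)$, and strictness in (P1) forces $\Gamma^*=H$. Degree bookkeeping follows: each source vertex of the first procedure strictly loses degree (\cref{lem:algorithm-one}). Together with an edge count, this shows that no edge was moved and that the complement of the low-degree set is a clique (\cref{lem:equality-rigidity}). Finally, a closure computation (\cref{lem:closure-completion}) excludes a low-degree set whose neighborhood is larger than in $H$, since that would produce the forbidden structure. Without a concrete certificate for Step 2 and such a rigidity step for Step 3, the proposal does not yet constitute a proof.
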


\begin{theorem}[Ai, Lei, Ning, and Shi \cite{ALNS2025}]\label{thm:ALNS-path}
Let $n\ge k\ge4$, $t=\lfloor k/2\rfloor-1$, and $\mathcal P$ be a graph parameter feasible on connected graphs. If $G$ maximizes $\mathcal P$ among all connected graphs of order $n$ containing no $P_k$, then
$G\in\left\{K_s\vee\bigl((n-k+s+1)K_1\cup K_{k-2s-1}\bigr):1\le s\le t\right\}.$
\end{theorem}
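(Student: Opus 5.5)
We prove \cref{thm:ALNS-path} by taking an extremal graph and simplifying it with the two monotone operations of feasibility until it has the claimed shape. Let $G$ maximize $\mathcal P$ among connected $n$-vertex $P_k$-free graphs.

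\emph{Step 1 (edge-maximality).} By (P1), adding any nonedge to $G$ keeps it connected and strictly increases $\mathcal P$. Hence every nonedge $xy$ satisfies that $G+xy$ contains a $P_k$, so $G$ is saturated: every nonedge is the middle edge of some $P_k$ in $G+xy$.

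\emph{Step 2 (Kelmans shifting).} We use the standard fact, going back to Kelmans, that the operation in \eqref{eq:kelmans} does not create long paths. More precisely, if $G[x\to y]$ contains a path on $k$ vertices, then so does $G$, because circumference and longest-path length cannot increase under the shift. The operation also preserves connectivity, since $y$ inherits the moved neighbours and $x$ stays attached to $y$ or to common neighbours. One checks the connected case directly.

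By (P2), $\mathcal P(G[x\to y])\ge\mathcal P(G)$, so $G[x\to y]$ is again a maximizer. Among all maximizers we choose $G$ to maximize a potential, say $\sum_v d(v)^2$, which strictly increases under any nontrivial shift. Then every shift is trivial, so $G$ is a \emph{threshold graph}: for all $x,y$ we have $N(x)\setminus\{y\}\subseteq N[y]$ or the reverse. Thus the vertices are linearly ordered by neighbourhood inclusion, and the graph is determined by a clique $A$ of "dominant" vertices together with a nested independent-set structure.

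\emph{Step 3 (structure of threshold $P_k$-free graphs).} For a connected threshold graph we compute the longest path explicitly. Order the vertices $v_1,\dots,v_n$ by decreasing degree. Let $S=\{v_1,\dots,v_s\}$ be the universal vertices and $C$ the clique of vertices adjacent to everything except the independent tail. A longest path alternates between $S$ and the independent vertices and then sweeps the clique, so it has about $2s+|C\setminus S|+1$ vertices.

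By edge-maximality (Step 1), every nonedge added must create a $P_k$. This forces the graph to be exactly $K_s\vee\bigl((n-k+s+1)K_1\cup K_{k-2s-1}\bigr)$: the remaining clique has size exactly $k-2s-1$, and there are no intermediate nested levels, since any intermediate vertex could receive an extra edge without creating a $P_k$. The constraint $1\le s\le t$ comes from connectivity ($s\ge1$) and from $k-2s-1\ge1$.

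\emph{Main obstacle.} The hardest step is Step 2. We must verify that the Kelmans shift preserves both connectivity and $P_k$-freeness. We must also ensure that the choice among maximizers (via the potential) is compatible with the conclusion. The conclusion concerns every maximizer, and shifting only exhibits one good maximizer. To close this gap we argue in reverse: if $G$ is a maximizer and $G'=G[x\to y]$ is a nontrivial shift, then $G'$ is a maximizer of the stated form. We then check directly that $G$ must be isomorphic to $G'$, using (P1) to show that otherwise $G$ would admit an edge addition that stays $P_k$-free.
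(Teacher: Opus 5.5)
The paper quotes this theorem from Ai, Lei, Ning, and Shi and does not reprove it, so there is no in-paper proof to compare with. Judged on its own, your argument has a genuine gap, and it sits in the step you call standard.

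\textbf{Step~2.} You need $G[x\to y]$ to stay in the admissible class, and it need not.

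Connectivity fails whenever $x$ and $y$ are nonadjacent with no common neighbour: then $W=N_G(x)$, and $x$ is isolated in $G[x\to y]$.

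$P_k$-freeness fails even when the shifted graph stays connected. Take vertices $a_1,a_2,z,x,q,y,b$ and edges $a_1a_2$, $a_2z$, $zx$, $xq$, $zy$, $zb$, $yb$. Every path meets $z$ at most once, and $G-z$ has three components of order $2$. So every path has at most $5$ vertices, and $G$ is a connected $P_6$-free graph of order $7$. Here $W=N_G(x)\setminus N_G[y]=\{q\}$, so $G[x\to y]$ replaces $xq$ by $yq$. The result is still connected, since $xz$ survives, but it contains the path $a_1a_2zbyq$ on $6$ vertices.

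Consequently (P2) only compares $\mathcal P(G)$ with the value of a graph that may lie outside the class. You cannot conclude that $G[x\to y]$ is again a maximizer, and the potential argument that was supposed to produce a threshold maximizer collapses. One can check that the shift is harmless when $xy\in E(G)$. However, shifts along edges only force $N[x]\subseteq N[y]$ or $N[y]\subseteq N[x]$ for adjacent pairs, and that does not give a threshold graph: $K_1\vee 2K_2$ has this property.

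\textbf{The equality part and Step~3.} Even granting Step~2, the classification of \emph{all} maximizers is not proved. Because (P2) is non-strict, a maximizer need not be threshold. Your ``reverse'' argument would need a real rigidity statement: if an admissible $G$ is carried by a chain of shifts onto $K_s\vee((n-k+s+1)K_1\cup K_{k-2s-1})$, then $G$ already equals that graph. For a multi-step chain this again needs every intermediate graph to be admissible. Step~3 is also only sketched: the longest-path count in threshold graphs, and the claim that any intermediate nested level can take an extra edge without creating a $P_k$, are asserted rather than shown.

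\textbf{The missing idea.} The paper's method for its own analogous theorems is to never require intermediate graphs to be admissible. It first extracts a structural certificate from the forbidden property, namely a P\'{o}sa-type low-degree set (\cref{thm:posa}). It then applies only controlled shifts (\cref{alg:independent,alg:concentrate}) whose output is a spanning subgraph of a fixed extremal graph, and that graph is checked to be admissible directly. Then (P1) and (P2) alone bound $\mathcal P(G)$ (\cref{lem:compression}). Uniqueness comes from a separate degree-tracking and closure argument (\cref{lem:equality-rigidity,lem:closure-completion}). For the present theorem you would need the corresponding Kopylov-type certificate for connected $P_k$-free graphs, and that is where the real work lies.
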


\begin{figure}[t]
\centering
\begin{tikzpicture}[
  box/.style={draw,rounded corners,minimum width=24mm,minimum height=22mm,align=center},
  joinline/.style={thick},
  every node/.style={font=\small}]
\node[box] (A) at (0,0) {$K_s$\\[-1mm]$\bullet\ \bullet\ \bullet$};
\node[box] (B) at (4.1,1.35) {$sK_1$\\[-1mm]$\bullet\quad\bullet\quad\bullet$};
\node[box] (C) at (4.1,-1.35) {$K_{n-2s}$\\[-1mm]$\bullet\!\!\!\rule{7mm}{0.3pt}\!\!\!\bullet$};
\draw[joinline] (A.east) -- (B.west) node[midway,above,sloped]{all edges};
\draw[joinline] (A.east) -- (C.west) node[midway,below,sloped]{all edges};
\node[draw=none,fit=(B)(C),inner sep=3mm,label=right:$sK_1\cup K_{n-2s}$] {};
\end{tikzpicture}
\caption{The graph $K_s\vee(sK_1\cup K_{n-2s})$. No edges join $sK_1$ to $K_{n-2s}$.}
\label{fig:canonical-cycle}
\end{figure}

All graphs considered in this paper are finite and simple. Graph equality is defined up to isomorphism, and every graph parameter is assumed to be a real-valued isomorphism invariant. We follow the notation of Bondy and Murty \cite{BM2008}. For $S\subseteq V(G)$, let $G[S]$ be the subgraph induced by $S$, let $G-S=G[V(G)\setminus S]$, and let
$N_G(S)=\{v\in V(G)\setminus S:uv\in E(G)\text{ for some }u\in S\}$.
For disjoint sets $S,T\subseteq V(G)$, let $e_G(S,T)$ be the number of edges with one end in $S$ and the other in $T$. We write $C_n$ and $P_n$ for the cycle and the path of order $n$, respectively, and $K_0$ for the empty graph. A graph $G$ is \emph{Hamiltonian-connected} if every pair of distinct vertices of $G$ are joined by a Hamilton path; we write $\HC$ for this property.

For a graph $F$, a graph parameter $\mathcal P$, and an integer $k\ge0$, let $\ex_{\mathcal P}(n,F;\delta\ge k)$ denote the maximum value of $\mathcal P$ over all graphs of order $n$ and minimum degree at least $k$ that contain no copy of $F$. The corresponding extremal family is denoted by $\EX_{\mathcal P}(n,F;\delta\ge k)$. We define $\ex_{\mathcal P}(n,\HC;\delta\ge k)$ and $\EX_{\mathcal P}(n,\HC;\delta\ge k)$ analogously for non-Hamiltonian-connected graphs.

For the relevant integers $n$ and $s$, put
\begin{equation}\label{eq:canonical-graphs}\tag{1.2}
\begin{aligned}
H_{n,s}&=K_s\vee(sK_1\cup K_{n-2s}),\\
L_{n,s}&=K_{s-1}\vee(sK_1\cup K_{n-2s+1}),\\
R_{n,s}&=K_{s+1}\vee(sK_1\cup K_{n-2s-1}).
\end{aligned}
\end{equation}

We shall prove the following result, which generalizes \cref{thm:erdos1962} and is motivated by \cref{thm:ALNS-cycle,thm:ALNS-path}.

\begin{theorem}\label{ex-P-CP}
The following statements hold.
\begin{enumerate}[label=(\roman*)]
\item Let $\mathcal P$ be feasible, $k\ge1$, and $n\ge2k+1$. Then
\[
\ex_{\mathcal P}(n,C_n;\delta\ge k)
=\max_{k\le s\le\lfloor(n-1)/2\rfloor}\mathcal P(H_{n,s}),
\]
and
\[
\EX_{\mathcal P}(n,C_n;\delta\ge k)
=\left\{H_{n,s}:
\substack{k\le s\le\lfloor(n-1)/2\rfloor,
\mathcal P(H_{n,s})=\ex_{\mathcal P}(n,C_n;\delta\ge k)}
\right\}.
\]
\item Let $\mathcal P$ be feasible, $k\ge1$, and $n\ge2k+2$. Then
\[
\ex_{\mathcal P}(n,P_n;\delta\ge k)
=\max_{k+1\le s\le\lfloor n/2\rfloor}\mathcal P(L_{n,s}),
\]
and
\[
\EX_{\mathcal P}(n,P_n;\delta\ge k)
=\left\{L_{n,s}:
\substack{k+1\le s\le\lfloor n/2\rfloor,
\mathcal P(L_{n,s})=\ex_{\mathcal P}(n,P_n;\delta\ge k)}
\right\}.
\]
\end{enumerate}
\end{theorem}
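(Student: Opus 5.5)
We first check that the candidate graphs are admissible. For $k\le s\le\lfloor(n-1)/2\rfloor$, the graph $H_{n,s}$ has minimum degree $s\ge k$ (attained on the $sK_1$ part). It is not Hamiltonian, because deleting the $s$ vertices of $K_s$ leaves $s+1$ components. Similarly, $L_{n,s}$ has minimum degree $s-1\ge k$, and deleting $K_{s-1}$ leaves $s+1$ components, so $L_{n,s}$ has no Hamilton path. This gives the lower bounds for both parts.

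For the upper bound in (i), take $G\in\EX_{\mathcal P}(n,C_n;\delta\ge k)$. By (P1), $G$ is edge-maximal non-Hamiltonian: if adding some nonedge kept $G$ non-Hamiltonian, the new graph would still be connected and have $\delta\ge k$, and $\mathcal P$ would strictly increase. Note that $G$ is connected, since otherwise adding any set $F$ of edges that connects it gives a non-Hamiltonian graph (if $F$ creates a cut vertex or bridge) with larger $\mathcal P$. Even more simply, a disconnected maximal graph is $K_a\cup K_b$, and adding a single edge keeps it non-Hamiltonian. By Bondy--Chvátal closure and maximality, every pair of nonadjacent vertices $u,v$ satisfies $d(u)+d(v)\le n-1$. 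The standard Chvátal degree-sequence argument then yields an integer $s$ with $k\le\delta\le s\le\lfloor(n-1)/2\rfloor$ such that there are at least $s$ vertices of degree at most $s$ and at least $s$ vertices of degree at least $n-s$.

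We would prefer a structural route that uses the Kelmans operation, with the aim of showing $G\cong H_{n,s}$. Take a minimum-degree-type structure: let $S$ be a set with $c(G-S)>|S|$, which exists by the toughness-type obstruction. Then the aim is to show, using maximality together with Kelmans shifting, that $G$ is $K_s\vee(\text{components})$ and finally $H_{n,s}$. Concretely, the plan has two steps.
\begin{enumerate*}[label=(\alph*)]
\item Among the vertices of $G$, consider two with $N(x)\not\subseteq N[y]$ and $N(y)\not\subseteq N[x]$. Apply $G[x\to y]$. This does not decrease $\mathcal P$ by (P2), and shifted graphs of non-Hamiltonian graphs remain non-Hamiltonian, since the Kelmans operation preserves non-Hamiltonicity (a known fact, Kelmans).
\item Iterating gives a threshold graph $G'$ with $\mathcal P(G')\ge\mathcal P(G)$.
\end{enumerate*}
The problem is that the minimum degree may drop, since $x$ loses neighbors. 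To handle this, we choose $x,y$ with $d(x)\ge d(y)$ carefully, or we recover the degree by (P1). The new graph $G'$ is non-Hamiltonian, and we add edges to make it maximal. We only need $\delta\ge k$ at the end.

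The main step, and the main obstacle, is therefore controlling the minimum degree. Our first attempt is as follows. Fix the $s$ low-degree vertices $A$ (degree at most $s$) and the high-degree set. Restrict Kelmans operations to pairs $x,y$ that are both outside $A$, or both inside $A$ with identical degrees, so that the degrees of vertices of $A$ never fall below $\delta(G)\ge k$. Then show that in the resulting threshold-like graph, $A$ is independent and each vertex of $A$ is joined exactly to a common set $B$ of size $s$, with $V\setminus(A\cup B)$ a clique joined completely to $B$. Edge-maximality plus non-Hamiltonicity (since $c(G-B)\ge s+1$) then forces $G'=H_{n,s}$. Strictness of (P1) gives equality in the whole chain, so $G$ itself must already be $H_{n,s}$, because any missing edge would contradict maximality. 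This yields the characterization of the extremal family.

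For (ii), we reduce to (i). Since $G$ has no Hamilton path if and only if $G\vee K_1$ has no Hamilton cycle, we run the same argument, or directly repeat it with the Hamilton-path version of closure (the condition becomes $d(u)+d(v)\le n-2$). The analogue produces $s-1$ high-degree vertices and $s$ independent vertices of degree $s-1\ge k$, which gives $L_{n,s}$ with $k+1\le s\le\lfloor n/2\rfloor$. The minimum-degree bookkeeping is handled identically.
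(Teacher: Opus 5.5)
Your lower-bound check is correct, and so is the reduction to a connected, edge-maximal non-Hamiltonian maximizer. The upper bound and the characterization of the extremal family, however, rest on a shifting step that you only assert, and as you set it up it cannot work.

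\textbf{The restricted shifting cannot reach the claimed structure.} Every operation you allow (both ends outside $A$, or both inside $A$) replaces a moved edge $xw$ by $yw$ with $x,y$ on the same side. Hence $e(G[A])$ is invariant under all of them. You therefore never reach a graph in which $A$ is independent unless $A$ was independent to begin with. For example, take $G=K_1\vee(K_k\cup K_{n-k-1})$ with $k\ge2$. It is edge-maximal non-Hamiltonian with $\delta(G)=k$, and its low-degree set is a $k$-clique, so your procedure has no way to compare it with $H_{n,k}$. Also, an operation with $x,y\in A$ still lowers $d(x)$ by $|W|$, so it does not protect the degrees in $A$ either. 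The outcome ``$A$ independent, joined to a common $B$ of size $s$'' is essentially the content of the theorem, and it is left unproved.

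\textbf{The missing idea.} You never need the intermediate graphs to be admissible. Conditions (P1) and (P2) hold for \emph{all} graphs, so only the final comparison graph must be admissible. The paper proceeds as follows.
\begin{enumerate*}[label=(\alph*)]
\item Using \cref{thm:posa} on the original $G$, fix a set $S$ of $s$ vertices of degree at most $s$; this forces $s\ge\delta(G)\ge k$.
\item Delete every edge $u_iu_j$ inside $S$ by a \emph{cross} operation $G[u_i\to v_t]$, where $v_t\in T$ is a non-neighbour of $u_j$. Such a $v_t$ exists because $d(u_j)\le s\le n-s$, and degrees in $S$ only drop.
\item Use operations $v_j\to v_t$ inside $T$ to push all $S$-neighbourhoods into $\{v_1,\dots,v_s\}$.
\end{enumerate*}
The output is a spanning subgraph of $H_{n,s}$, which is admissible because $s$ was fixed from $G$. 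The loss of minimum degree along the way is harmless.

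\textbf{The equality step also has a gap.} Since (P2) is not strict, $\mathcal P(G)=\mathcal P(G')$ says nothing about $G$ versus its shifted image. So ``any missing edge would contradict maximality'' does not yield $G\cong H_{n,s}$. The paper needs two further arguments:
\begin{enumerate*}[label=(\alph*)]
\item the degree and edge-count rigidity of \cref{lem:equality-rigidity}, which shows no operation of \cref{alg:independent} occurred, $S$ is independent with all degrees equal to $s$, and $T$ is a clique;
\item the closure computation of \cref{lem:closure-completion}, which shows that if $|N(S)|\ge s+1$ then $\cl_n(G)=K_n$.
\end{enumerate*}

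\textbf{Part (ii).} It inherits the same gaps. In addition, passing to $G\vee K_1$ does not transport the value of $\mathcal P$. That device can serve the low-degree and closure lemmas, but it does not reduce the extremal problem for paths to the one for cycles.
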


\begin{remark}
Taking $\mathcal P=e$ (the number of edges) in \cref{ex-P-CP}(i), together with the convexity calculation in \cref{sec:applications}, recovers Erd\H{o}s's 1962 extremal theorem. Taking $\mathcal P=\rho$ or $\mathcal P=q$, where $\rho(G)$ and $q(G)$ are the spectral radius and the signless Laplacian spectral radius of $G$, respectively, gives the spectral formulas stated in \cref{sec:applications}. Thus, \cref{ex-P-CP} provides a common generalization for the edge and spectral versions.
\end{remark}

We shall also prove the following extremal theorem for Hamiltonian-connectedness. For the edge parameter, Ore \cite{Ore1963} determined the unrestricted maximum size of a non-Hamilton-connected graph. Ho, Lin, Tan, Hsu, and Hsu \cite{Ho2010} later obtained an Erd\H{o}s-type threshold under a prescribed minimum-degree lower bound, and Zhang \cite{Zhang2026} determined the maximum size and the extremal graphs for prescribed order and minimum degree.

\begin{theorem}\label{ex-P-HC}
Let $\mathcal P$ be feasible, $k\ge2$, and $n\ge2k$. Then
\[
\ex_{\mathcal P}(n,\HC;\delta\ge k)
=\max_{k-1\le s\le\lfloor n/2\rfloor-1}\mathcal P(R_{n,s}),
\]
and
\[
\EX_{\mathcal P}(n,\HC;\delta\ge k)
=\left\{R_{n,s}:
\substack{k-1\le s\le\lfloor n/2\rfloor-1,
\mathcal P(R_{n,s})=\ex_{\mathcal P}(n,\HC;\delta\ge k)}
\right\}.
\]
\end{theorem}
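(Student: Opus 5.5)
We follow the feasible-parameter scheme: take an extremal graph, saturate it, reduce it by Kelmans operations, and identify it as one of the $R_{n,s}$.

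\textbf{Setup and saturation.} Let $G\in\EX_{\mathcal P}(n,\HC;\delta\ge k)$. Each $R_{n,s}$ with $k-1\le s\le\lfloor n/2\rfloor-1$ has minimum degree $s+1\ge k$. It is not Hamiltonian-connected. Indeed, delete $s-1$ vertices of the $K_{s+1}$ and keep the other two, $a$ and $b$. What remains has $s+1$ components: the $s$ isolated vertices and $\{a,b\}\cup K_{n-2s-1}$ (or $\{a,b\}$ alone when $n-2s-1=0$). A Hamilton-connected graph has the property that $G-S$ has at most $|S|$ components for every $S$, so this rules it out. Hence the maximum is at least $\max_s\mathcal P(R_{n,s})$.

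By (P1), adding any nonedge to a connected graph strictly increases $\mathcal P$. If $G$ were disconnected, we could add edges to make it connected and keep it non-$\HC$, since there is a two-connected-component-preserving choice; more simply, add all edges inside components until $G$ is a disjoint union of cliques, then join by a single edge, which yields a cut vertex and so is still not $\HC$. So we may assume $G$ is connected and edge-maximal non-$\HC$: $G+e\in\HC$ for every nonedge $e$.

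Now use the Bondy--Chvátal closure for Hamilton-connectedness, with threshold $d(u)+d(v)\ge n+1$. It shows that every nonadjacent pair satisfies $d(u)+d(v)\le n$. This is exactly the Ore-type setting used for the edge version by Zhang and by Ho et al.

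\textbf{Degree sequence.} The key structural step is a Chvátal-type condition for Hamilton-connectedness (Lesniak / Berge). Since $G$ is not $\HC$, its degree sequence $d_1\le\dots\le d_n$ has some $s$ with $2\le s+1\le n/2$ such that $d_{s}\le s+1$ and $d_{n-s-1}\le n-s-1$. Here $s\ge k-1$ because $\delta\ge k$, and $s\le\lfloor n/2\rfloor-1$.

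With maximality, we aim to show:
\begin{itemize}
\item there is a set $A$ of $s$ vertices of degree at most $s+1$;
\item the set $B$ of the $s+1$ highest-degree vertices is complete to everything;
\item the remaining vertices form a clique.
\end{itemize}

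This will not follow from degree counting alone. Instead we use Kelmans operations. Choose $x\in A$ and $y\in B$, and apply $G[x\to y]$ repeatedly to move the neighbours of low-degree vertices onto high-degree vertices. By (P2) this does not decrease $\mathcal P$.

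\textbf{Main obstacle.} The difficulty is that Kelmans operations may destroy non-$\HC$-ness and may lower $\delta$. Our plan is to avoid invoking general preservation lemmas. Instead, we choose the operations so that the resulting graph is always a spanning subgraph of $R_{n,s}$ for the fixed $s$ found above. Each step only transfers edges toward $B$. At the end, every vertex of $A$ has its neighbourhood inside $B$, and $|N(x)|\ge k$ still holds because $|B|=s+1\ge k$ and $B$ is complete to $A$ after saturation.

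Concretely, we show the final graph $G'$ is a spanning subgraph of $R_{n,s}$ with $\mathcal P(G')\ge\mathcal P(G)$. Then (P1) gives $\mathcal P(R_{n,s})\ge\mathcal P(G')$, with strict inequality unless $G'=R_{n,s}$.

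For the equality case, trace back: if $\mathcal P(G)=\mathcal P(R_{n,s})$, no edge was added, so $G'=R_{n,s}$. It remains to show the Kelmans steps were trivial, so that $G\cong R_{n,s}$. For this we argue directly from edge-maximality: any nontrivial private neighbour of $x\in A$ outside $B$ would give, after adding a suitable edge, a non-$\HC$ proper supergraph, contradicting maximality.

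We expect this last tracing, together with verifying that the chosen $s$ from the degree condition makes the structure exactly $K_{s+1}\vee(sK_1\cup K_{n-2s-1})$, to be the hardest part.
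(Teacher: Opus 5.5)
Your outline (a low-degree set from a Chv\'atal-type condition, Kelmans compression into a spanning subgraph of $R_{n,s}$, then (P1)) is the paper's outline. However, the compression step as you specify it fails, and the equality case is not actually argued.

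Under (1.1), $G[x\to y]$ with $x\in A$, $y\in B$ moves the private neighbours of $x$ onto $y$. Operations of this kind only delete edges at vertices of $A$. So if $x$ has a neighbour $w\notin A\cup B$ adjacent to all of $B$ (typical when $V(G)\setminus A$ is a clique), the edge $xw$ is never removed and you never reach $N(x)\subseteq B$. Edges inside $A$ are not addressed at all. The source must instead be the outside endpoint: $G[w\to y]$ with $y\in B\setminus N(x)$ replaces $xw$ by $xy$. Such $y$ exists because $d(x)\le s+1=|B|$ and $x$ has a neighbour outside $B$. If every target lies in a fixed $(s+1)$-set $B$ disjoint from $A$, then no edge avoiding $B$ is ever created and no degree in $A$ ever increases. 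Hence this process terminates with $A$ independent and $N(A)\subseteq B$. The paper gets the same effect in two stages: Algorithm~1 makes $S$ independent, using the invariant $c_j\le n-s$ to find a target; Algorithm~2 performs moves $v_j\to v_t$ inside $T$ that preserve all degrees in $S$. Intermediate graphs need not be spanning subgraphs of $R_{n,s}$, have $\delta\ge k$, or be non-Hamilton-connected; only $R_{n,s}$ has to be admissible. So your claim that $|N(x)|\ge k$ persists is both unjustified and unneeded.

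Separately, your proof that $R_{n,s}$ is not Hamilton-connected is wrong. Deleting $s-1$ central vertices leaves a connected graph, since the two kept vertices are joined to everything. Delete the whole central $X\cong K_{s+1}$ instead; this leaves $s+1$ components, as $n-2s-1\ge1$. A Hamilton path with both ends in $X$ leaves at most $s$ segments, which cannot cover them.

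For equality, the conclusion $G'=R_{n,s}$ is right, but the tracing back is missing, and the mechanism you suggest points the wrong way. By edge-maximality every proper supergraph of $G$ is Hamilton-connected, so the contradiction has to show that $G$ itself would be Hamilton-connected. The missing idea is that Kelmans operations preserve the number of edges. So $e(G)=e(R_{n,s})=\binom{n-s}{2}+s(s+1)$, whereas $e(G)\le\binom{n-s}{2}+\sum_{x\in A}d_G(x)-e(G[A])\le\binom{n-s}{2}+s(s+1)-e(G[A])$. Hence $A$ is independent, every vertex of $A$ has degree exactly $s+1$, and $V(G)\setminus A$ is a clique; this is the paper's rigidity lemma. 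Now each $y\in N(A)$ has $d(y)\ge n-s$. So a nonadjacent pair $x\in A$, $y\in N(A)$ would have degree sum at least $n+1$, contradicting your own observation that nonadjacent pairs have degree sum at most $n$. (The paper instead shows $\cl_{n+1}(G)=K_n$ whenever $|N(A)|\ge s+2$.) Therefore $N(x)=N(A)$ for every $x\in A$, $|N(A)|=s+1$, and $G=R_{n,s}$.
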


In \cref{sec:applications}, we present a theorem of F\"uredi, Kostochka, and Luo that follows from our main results. We also give applications to spectral graph theory, including the cycle, path, and Hamiltonian-connected extremal formulas. In \cref{sec:proofs}, we first state the low-degree and closure results used in the argument, then establish the propositions for two Kelmans operations, and finally prove \cref{ex-P-CP,ex-P-HC} by a sequence of claims.

%-----------------------------------------------------------------------
\section{Applications}\label{sec:applications}

We first record the concept of weak-feasibility introduced in \cite{ALNS2025} for connected graphs.

\begin{definition}[Ai, Lei, Ning, and Shi \cite{ALNS2025}]\label{def:weak-connected}
A graph parameter $\mathcal P$ is \emph{weakly feasible on connected graphs} if, for every connected graph $G$, we have $\mathcal P(G+xy)\ge\mathcal P(G)$ for every non-edge $xy$ of $G$, and $\mathcal P(G[x\to y])\ge\mathcal P(G)$ for every ordered pair of distinct vertices $x,y\in V(G)$.
\end{definition}

For later use, we also extend this definition to arbitrary graphs.

\begin{definition}\label{def:weak-feasible}
A graph parameter $\mathcal P$ is \emph{weakly feasible} if the following conditions hold for every graph $G$.
\begin{enumerate}[label=(W\arabic*)]
\item $\mathcal P(G+F)\ge\mathcal P(G)$ for every set
$F\subseteq\binom{V(G)}2\setminus E(G)$ such that $G+F$ is connected;
\item $\mathcal P(G[x\to y])\ge\mathcal P(G)$ for every ordered pair of distinct vertices $x,y\in V(G)$.
\end{enumerate}
\end{definition}

The next lemma are basic for our results.

\begin{lemma}\label{lem:basic-feasible}
The parameters $e$, $\rho$, and $q$ are feasible in the sense of \cref{def:feasible}.
\end{lemma}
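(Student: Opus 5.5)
We verify (P1) and (P2) of \cref{def:feasible} for each of $e$, $\rho$ and $q$ separately. The edge count is immediate. If $F\ne\emptyset$ consists of non-edges, then $e(G+F)=e(G)+|F|>e(G)$, so (P1) holds even without the connectivity hypothesis. For (P2), the Kelmans operation deletes the $|W|$ edges $xw$ with $w\in W$ and adds the $|W|$ edges $yw$. These added edges are new because $w\notin N_G[y]$, and they are distinct because the $w$ are distinct. Hence $e(G[x\to y])=e(G)$.

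For $\rho$ and $q$ we use Perron--Frobenius theory on the nonnegative matrices $A(G)$ and $Q(G)=D(G)+A(G)$.

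For (P1), set $H=G+F$, which is connected by hypothesis. Then $A(H)$ and $Q(H)$ are irreducible, and they are entrywise $\ge A(G)$ and $Q(G)$ respectively, with strict inequality in at least one entry since $F\neq\emptyset$. For irreducible nonnegative matrices, the spectral radius is strictly monotone under entrywise increase. We therefore get $\rho(H)>\rho(G)$ and $q(H)>q(G)$. Here $G$ itself may be disconnected; that is harmless, because the strict comparison theorem only needs the larger matrix to be irreducible. Concretely, let $\mathbf x\ge0$ be a unit Perron vector of $G$. Then $\mathbf x^TA(H)\mathbf x\ge\rho(G)$. If equality held, $\mathbf x$ would be an eigenvector of $A(H)$ for its largest eigenvalue, hence positive, and then $\mathbf x^T(A(H)-A(G))\mathbf x>0$, a contradiction. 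The same argument works for $Q$, since $\mathbf x^TQ\mathbf x=\sum_{uv\in E}(x_u+x_v)^2$.

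For (P2), we use the Rayleigh quotient with a Perron vector $\mathbf x\ge0$ of $G$ (unit length), and reorder so that the relabelling favours the vertex with the larger entry. Since $G[x\to y]\cong G[y\to x]$ (swap the names $x$ and $y$), we may assume $x_y\ge x_x$, where $x_v$ denotes the entry of $\mathbf x$ at $v$.

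For $\rho$, we have
\[
\mathbf x^TA(G[x\to y])\mathbf x-\mathbf x^TA(G)\mathbf x=2\sum_{w\in W}x_w(x_y-x_x)\ge0,
\]
so $\rho(G[x\to y])\ge\rho(G)$.

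For $q$, only the edges $xw$ and $yw$ with $w\in W$ change, so
\[
\mathbf x^TQ(G[x\to y])\mathbf x-\mathbf x^TQ(G)\mathbf x=\sum_{w\in W}\bigl[(x_y+x_w)^2-(x_x+x_w)^2\bigr]\ge0.
\]
Hence $q(G[x\to y])\ge q(G)$.

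The isomorphism $G[x\to y]\cong G[y\to x]$ needs a short check, and we expect it to be the main obstacle. In $G[x\to y]$ the neighbourhood of $y$ becomes $N_G(y)\cup W$, and that of $x$ becomes $N_G(x)\cap N_G[y]$. The symmetric description holds for $G[y\to x]$, and with it the check reduces to a direct comparison of edge sets. The adjacency of $x$ and $y$ is preserved in both graphs, so the transposition $x\leftrightarrow y$ gives the isomorphism.
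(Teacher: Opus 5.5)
Your proposal is correct and follows the paper's proof essentially step for step. For (P1) you use the strict Perron--Frobenius comparison, which needs only $A(G+F)$ and $Q(G+F)$ to be irreducible; for (P2) you do the quadratic-form computation with a nonnegative unit Perron vector, after using $G[x\to y]\cong G[y\to x]$ to assume the entry at $y$ is at least the entry at $x$.

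One small fix is needed in your ``concretely'' sketch. The equality you should assume is $\rho(G+F)=\rho(G)$. That forces $\mathbf x^{\mathsf T}A(G+F)\mathbf x=\rho(G+F)$, and hence $\mathbf x$ is a positive eigenvector of $A(G+F)$. The weaker equality $\mathbf x^{\mathsf T}A(G+F)\mathbf x=\rho(G)$, which is what your sketch literally assumes, can hold without $\mathbf x$ being an eigenvector. For example, take $G=K_3\cup K_1$ with one edge added joining the two components.
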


\begin{proof}
The assertion for $e$ follows from $e(G+F)=e(G)+|F|$ and the fact that a Kelmans operation preserves the number of edges.

We next consider $\rho$. Let $F$ be a nonempty set of nonedges such that $G+F$ is connected. We have $A(G)\le A(G+F)$ entrywise and $A(G)\ne A(G+F)$, while $A(G+F)$ is irreducible. The comparison form of the Perron--Frobenius theorem (see, for example, \cite[Chapter~2]{Zhan2013}) gives $\rho(G+F)>\rho(G)$.

For the Kelmans operation parts, the proof is standard. We include the proof here for completeness. Let $H=G[x\to y]$ and put $W=N_G(x)\setminus N_G[y]$. The graphs $G[x\to y]$ and $G[y\to x]$ are isomorphic via the transposition of $x$ and $y$. Let $\bm z$ be a nonnegative unit eigenvector of $A(G)$ corresponding to $\rho(G)$. If $z_y\ge z_x$, then
\[
\bm z^{\mathsf T}A(H)\bm z-\bm z^{\mathsf T}A(G)\bm z
=2(z_y-z_x)\sum_{w\in W}z_w\ge0.
\tag{2.1}\label{eq:rho-kelmans}
\]
Hence $\rho(H)\ge\rho(G)$. If $z_y<z_x$, the same calculation applied to $G[y\to x]$, together with the isomorphism above, gives the same conclusion.

The proof for $q$ is analogous. Let $\bm z$ be a nonnegative unit eigenvector of $Q(G)$ corresponding to $q(G)$. Since
$\bm z^{\mathsf T}Q(G)\bm z=\sum_{uv\in E(G)}(z_u+z_v)^2$, if $z_y\ge z_x$, then
\[
\bm z^{\mathsf T}Q(H)\bm z-\bm z^{\mathsf T}Q(G)\bm z
=(z_y-z_x)\sum_{w\in W}(z_x+z_y+2z_w)\ge0.
\tag{2.2}\label{eq:q-kelmans}
\]
The opposite inequality between $z_x$ and $z_y$ is handled by interchanging $x$ and $y$. Moreover, if $G+F$ is connected, then $Q(G)\le Q(G+F)$ entrywise, $Q(G)\ne Q(G+F)$, and $Q(G+F)$ is irreducible. Thus, $q(G+F)>q(G)$ by the Perron--Frobenius theorem. We infer that $\rho$ and $q$ satisfy both conditions in \cref{def:feasible}.
\end{proof}

For an integer $r\ge2$, let $N_r(G)$ denote the number of copies of $K_r$ in $G$.

\begin{lemma}\label{lem:clique-weak}
For every $r\ge2$, the parameter $N_r$ is weakly feasible on arbitrary graphs.
\end{lemma}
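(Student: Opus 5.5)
Two things need checking: (W1), that adding non-edges never decreases $N_r$, and (W2), that a Kelmans operation never decreases $N_r$.

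\textbf{(W1).} This is immediate. Every copy of $K_r$ in $G$ is still a copy of $K_r$ in $G+F$, so $N_r(G+F)\ge N_r(G)$. No connectivity is needed, and the inequality need not be strict (for instance when $r\ge3$ and $F$ creates no new triangle), which is why $N_r$ is only weakly feasible.

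\textbf{(W2).} Let $H=G[x\to y]$ and $W=N_G(x)\setminus N_G[y]$. The plan is to build an injection from the $r$-cliques of $G$ into the $r$-cliques of $H$. First note which adjacencies change: only pairs $xw$ and $yw$ with $w\in W$. In particular, whether $x$ and $y$ are adjacent is the same in $G$ and $H$.

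Sort the $r$-cliques $Q$ of $G$ into four types.
\begin{enumerate}[label=(\alph*)]
\item Cliques avoiding $x$, and cliques containing $x$ but no vertex of $W$. These are untouched by the operation, and we map $Q\mapsto Q$.
\item Cliques containing both $x$ and $y$. Since $W\cap N_G(y)=\emptyset$, such a clique has no vertex of $W$, so it is already covered by (a).
\item Cliques containing $x$, not $y$, and at least one vertex of $W$. Map these to $Q'=(Q\setminus\{x\})\cup\{y\}$.
\end{enumerate}

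The main step is to check that $Q'$ is an $r$-clique of $H$ and that the map is injective.

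For the clique property, take $u\in Q\setminus\{x\}$ and split into two cases. If $u\in W$, then $yu\in E(H)$ by construction. If $u\notin W$, then $u\in N_G(x)\setminus W$, so $u\in N_G(y)$; since edges at $y$ are only added by the operation, $yu\in E(H)$. Edges among $Q\setminus\{x\}$ are unaffected. Hence $Q'$ is an $r$-clique of $H$.

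For injectivity, observe where the images land.
\begin{itemize}
\item Type (a) images are cliques of $H$ that are also cliques of $G$.
\item Type (c) images $Q'$ contain $y$ together with some $w\in W$. Since $yw\notin E(G)$, such a $Q'$ is not a clique of $G$, so it cannot coincide with a type (a) image.
\item Two distinct type (c) cliques $Q_1,Q_2$ have distinct images, because $Q$ is recovered from $Q'$ by replacing $y$ with $x$.
\end{itemize}
Therefore $N_r(H)\ge N_r(G)$.

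One last check is that the injection covers all cliques of $G$: every $r$-clique either avoids $x$, or contains $x$ and no vertex of $W$, or contains $x$ and some vertex of $W$ (in which case it avoids $y$ by (b)). This is exactly the classification above, so the lemma follows.
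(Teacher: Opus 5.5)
Your proof is correct and follows the same approach as the paper. For (W2) the paper uses the same injection: destroyed $r$-cliques (those containing $x$ and a vertex of $W$) are sent to the clique obtained by replacing $x$ with $y$, and all other cliques are fixed; you simply make the clique check and the injectivity (via the non-edge $yw$ of $G$) explicit, and the only blemish is cosmetic, since you announce four types but list three.
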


\begin{proof}
Adding edges cannot decrease $N_r(G)$. For the Kelmans operation, we use the injection from \cite{ALNS2025}. Let $H=G[x\to y]$ and $W=N_G(x)\setminus N_G[y]$. Every $r$-clique destroyed by the operation contains $x$ and at least one vertex of $W$. Replace $x$ by $y$ in the clique. Each remaining vertex of the clique is either a common neighbor of $x$ and $y$, or belongs to $W$ and becomes adjacent to $y$ in $H$. We therefore obtain an $r$-clique of $H$ that was not an $r$-clique of $G$. This correspondence is injective, and hence $N_r(H)\ge N_r(G)$.
\end{proof}

The strictness in Condition (P1) of \cref{def:feasible} is used only to classify equality graphs. Without strictness, the extremal values remain unchanged.

\begin{theorem}\label{thm:weak-extremal}
The following statements hold.
\begin{enumerate}[label=(\roman*)]
\item If $\mathcal P$ is weakly feasible, $d\ge1$, and $n\ge2d+1$, then
\[
\ex_{\mathcal P}(n,C_n;\delta\ge d)
=\max_{d\le s\le\lfloor(n-1)/2\rfloor}\mathcal P(H_{n,s}).
\]
\item If $\mathcal P$ is weakly feasible, $d\ge1$, and $n\ge2d+2$, then
\[
\ex_{\mathcal P}(n,P_n;\delta\ge d)
=\max_{d+1\le s\le\lfloor n/2\rfloor}\mathcal P(L_{n,s}).
\]
\end{enumerate}
\end{theorem}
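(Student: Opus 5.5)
The plan is to rerun the argument for \cref{ex-P-CP} with every strict inequality coming from (P1) replaced by the non-strict (W1). In that proof, strictness is used only to rule out extremal graphs other than $H_{n,s}$ (resp.\ $L_{n,s}$), never to produce the upper bound. Concretely, we prove both inequalities.

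\textbf{Lower bound.} We check that each $H_{n,s}$ with $d\le s\le\lfloor(n-1)/2\rfloor$ is admissible. It has order $n$, and its minimum degree is $s\ge d$, attained by the vertices of $sK_1$. It is non-Hamiltonian: deleting the $s$ vertices of $K_s$ leaves $s+1$ components when $n-2s\ge1$, which violates toughness. Similarly, each $L_{n,s}$ with $d+1\le s\le\lfloor n/2\rfloor$ has minimum degree $s-1\ge d$. It has no Hamilton path, because deleting $s-1$ vertices leaves $s+1$ components.

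\textbf{Upper bound, reduction.} Let $G$ be any non-Hamiltonian graph of order $n$ with $\delta(G)\ge d$. We transform $G$ into some $H_{n,s}$ with $s\ge d$ by operations that never decrease $\mathcal P$. The first step is to replace $G$ by a maximal non-Hamiltonian supergraph $G'$ on the same vertex set. Minimum degree only goes up, and since $\delta\ge1$ and maximality force $G'$ to be connected (indeed it is the Bondy--Chv\'atal closure-stable graph), (W1) gives $\mathcal P(G')\ge\mathcal P(G)$.

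\textbf{Upper bound, structure.} We then use the closure and low-degree results of \cref{sec:proofs}, together with the Kelmans-operation propositions established there. These propositions rely only on (P2)=(W2) and on monotonicity. The Chv\'atal-type degree sequence argument applied to $G'$ yields an index $s$ with $d\le s\le\lfloor(n-1)/2\rfloor$ and a set of $s$ vertices of degree at most $s$. Repeated Kelmans operations toward high-degree vertices, each preserving non-Hamiltonicity and $\delta\ge s\ge d$ as shown in those propositions, followed by edge additions, bring the graph to $H_{n,s}$. Each step weakly increases $\mathcal P$, so $\mathcal P(G)\le\mathcal P(H_{n,s})$. Part (ii) is handled identically: adding a universal vertex converts the Hamilton path question into the Hamilton cycle question, or equivalently we run the same chain of claims for $P_n$, ending at some $L_{n,s}$ with $s\ge d+1$.

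\textbf{Main obstacle.} The delicate point is checking that no claim in the proof of \cref{ex-P-CP} uses strictness of (P1) to \emph{exclude} a configuration needed for the upper bound, for instance when arguing that an extremal graph is edge-maximal. To handle this, I would phrase the whole argument not as ``take an extremal $G$'' but as an explicit transformation sequence from an arbitrary admissible $G$ to some $H_{n,s}$ or $L_{n,s}$. With that phrasing, only weak monotonicity is ever invoked. If the original proof does argue via extremal graphs, an alternative is a perturbation trick: apply \cref{ex-P-CP} to the feasible parameter $\mathcal P+\varepsilon e$ (feasible by \cref{lem:basic-feasible} and (W1),(W2)) and let $\varepsilon\to0$. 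This uses only finitely many graphs, so the limit is exact.
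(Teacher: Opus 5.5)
The skeleton of your upper bound for (i) is the paper's argument: take the P\'osa set $S$ of $s\le\lfloor(n-1)/2\rfloor$ vertices of degree at most $s$ (so $s\ge d$), apply \cref{lem:compression} with $b=s$, note $J_n(s,s)=H_{n,s}$, and use (W2) for the Kelmans steps and (W1) only for the final edge additions into the connected $H_{n,s}$. Your lower-bound check also matches the paper's.

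Two parts of the write-up are wrong, though. First, the Kelmans procedures do not preserve non-Hamiltonicity or $\delta\ge s\ge d$, and the paper proves nothing of the kind. By \cref{lem:algorithm-one}(iii), every source $u_i\in S$ strictly loses degree in \cref{alg:independent}, so intermediate graphs can have minimum degree below $d$. This is exactly the difficulty the abstract refers to. The argument needs no invariant along the way. The integer $s$, and with it the target $H_{n,s}$ and the bound $s\ge d$, is fixed from the original graph before any operation, and only the monotonicity of $\mathcal P$ is tracked. For the same reason, your preliminary passage to a maximal non-Hamiltonian supergraph is valid in (i) but unnecessary.

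Second, in (ii), ``handled identically'' fails at that preliminary step, because a maximal non-traceable graph need not be connected. For example, let $n\ge 2d+5$ and $G=K_{d+1}\cup(K_{n-d-1}-e)$. Then $\delta(G)=d$, and adding any edge between the two parts yields a traceable graph. So every non-traceable supergraph of $G$ is disconnected, and the step $\mathcal P(G)\le\mathcal P(G')$ is not covered by (W1). The fix is to drop the step: (W2) is assumed for all graphs, and the only edge addition needed is the last one, into the connected $L_{n,s}$.

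Likewise, the join $G\vee K_1$ can be used only to extract the low-degree set, as in \cref{lem:path-low-degree}. This gives $s\le\lfloor n/2\rfloor$ vertices of degree at most $s-1$ in $G$, hence $s\ge d+1$. You must then compress $G$ itself with $b=s-1\ge1$, landing on $J_n(s,s-1)=L_{n,s}$. Applying (i) to $G\vee K_1$ would bound $\mathcal P$ of an $(n+1)$-vertex graph and says nothing about $\mathcal P(G)$.

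Finally, your $\mathcal P+\varepsilon e$ fallback is a sound argument in isolation: that parameter is feasible, and the maximum is over finitely many graphs. Within this paper, however, it would be circular, since the value part of \cref{ex-P-CP} is itself deduced from the present theorem.
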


\begin{proof}
We first prove (i).

\begin{claim}\label{clm:weak-cycle-upper}
Every non-Hamiltonian graph $G$ of order $n$ with $\delta(G)\ge d$ satisfies
\[
\mathcal P(G)\le\max_{d\le s\le\lfloor(n-1)/2\rfloor}\mathcal P(H_{n,s}).
\]
\end{claim}

By \cref{thm:posa}, there is an integer $s$ with
$1\le s\le\lfloor(n-1)/2\rfloor$ for which $G$ contains a set $S=\{u_1,\dots,u_s\}$ such that $d_G(u_i)\le s$ for every $i$. Since $\delta(G)\ge d$, we have $s\ge d$. Put $T=V(G)\setminus S$. For $2\le j\le s$, we have
$|N_G(u_j)\cap(T\cup\{u_1,\dots,u_{j-1}\})|\le d_G(u_j)\le s\le n-s$.
Thus, \cref{lem:compression} applies with $b=s$. Since its construction uses only Kelmans operations and edge additions, conditions (W1) and (W2) give
$\mathcal P(G)\le\mathcal P(H_{n,s})$, which proves the claim.

\begin{claim}\label{clm:weak-cycle-lower}
Every graph $H_{n,s}$ with $d\le s\le\lfloor(n-1)/2\rfloor$ is admissible for part (i).
\end{claim}

The graph $H_{n,s}$ has minimum degree $s\ge d$. Deleting its central $K_s$ leaves $s+1$ components, whereas deleting $s$ vertices from a Hamilton cycle leaves at most $s$ path components. Thus, $H_{n,s}$ is non-Hamiltonian. Together with \cref{clm:weak-cycle-upper}, this proves (i).

We next prove (ii).

\begin{claim}\label{clm:weak-path-upper}
Every non-traceable graph $G$ of order $n$ with $\delta(G)\ge d$ satisfies
\[
\mathcal P(G)\le\max_{d+1\le s\le\lfloor n/2\rfloor}\mathcal P(L_{n,s}).
\]
\end{claim}

By \cref{lem:path-low-degree}, there is an integer $s$ with
$1\le s\le\lfloor n/2\rfloor$ for which $G$ contains a set $S=\{u_1,\dots,u_s\}$ such that $d_G(u_i)\le s-1$ for every $i$. Since $\delta(G)\ge d$, we have $s\ge d+1$. Put $T=V(G)\setminus S$. For $2\le j\le s$, we have
$|N_G(u_j)\cap(T\cup\{u_1,\dots,u_{j-1}\})|\le d_G(u_j)\le s-1\le n-s$.
Thus, \cref{lem:compression} applies with $b=s-1$, and conditions (W1) and (W2) give
$\mathcal P(G)\le\mathcal P(L_{n,s})$, which proves the claim.

\begin{claim}\label{clm:weak-path-lower}
Every graph $L_{n,s}$ with $d+1\le s\le\lfloor n/2\rfloor$ is admissible for part (ii).
\end{claim}

The graph $L_{n,s}$ has minimum degree $s-1\ge d$. Deleting its central $K_{s-1}$ leaves $s+1$ components, whereas deleting $s-1$ vertices from a spanning path leaves at most $s$ path components. We infer that $L_{n,s}$ is non-traceable. Together with \cref{clm:weak-path-upper}, this proves (ii).
\end{proof}

For fixed $r\ge2$, a direct count gives
\[
g_{n,r}(s):=N_r(H_{n,s})
=s\binom{s}{r-1}+\binom{n-s}{r}.
\tag{2.3}\label{eq:gr}
\]
Indeed, an $r$-clique either lies in the clique of order $n-s$, or contains one vertex of the independent set and $r-1$ vertices of the central $K_s$. With the convention $\binom ab=0$ for $b<0$ or $b>a$, we have
\[
g_{n,r}(s+2)-2g_{n,r}(s+1)+g_{n,r}(s)
=r\binom{s}{r-2}+(r-1)\binom{s}{r-3}
 +\binom{n-s-2}{r-2}\ge0.
\tag{2.4}\label{eq:convexity}
\]
Thus, $g_{n,r}$ is discretely convex, so its maximum on an integer interval is attained at an endpoint. We therefore recover the following theorem.

\begin{theorem}[F\"uredi, Kostochka, and Luo \cite{FKL2018}]\label{thm:FKL-cliques}
Let $n,d,r$ be integers with $1\le d\le\lfloor(n-1)/2\rfloor$ and $r\ge2$. If $G$ is a non-Hamiltonian graph of order $n$ with $\delta(G)\ge d$, then
\[
N_r(G)\le
\max\left\{
N_r(H_{n,d}),
N_r(H_{n,t})
\right\},
\]
where $t=\lfloor(n-1)/2\rfloor$.
\end{theorem}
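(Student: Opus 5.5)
The plan is to read \cref{thm:FKL-cliques} off \cref{thm:weak-extremal}(i) applied to $\mathcal P=N_r$, and then use discrete convexity to pass from a maximum over all admissible $s$ to the two endpoints $s=d$ and $s=t$.

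First, by \cref{lem:clique-weak}, $N_r$ is weakly feasible in the sense of \cref{def:weak-feasible}. The hypothesis $1\le d\le\lfloor(n-1)/2\rfloor$ gives $n\ge 2d+1$, so \cref{thm:weak-extremal}(i) applies with this $d$. Hence every non-Hamiltonian graph $G$ of order $n$ with $\delta(G)\ge d$ satisfies
\[
N_r(G)\le \max_{d\le s\le t} N_r(H_{n,s}) = \max_{d\le s\le t} g_{n,r}(s),
\]
where $g_{n,r}$ is given by \eqref{eq:gr}.

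Second, I would reduce this maximum to the endpoints. By \eqref{eq:convexity}, the second difference of $g_{n,r}$ is nonnegative whenever $s,s+1,s+2$ all lie in $[d,t]$. So the first differences $g_{n,r}(s+1)-g_{n,r}(s)$ are nondecreasing on the interval. A nondecreasing difference sequence means $g_{n,r}$ either first decreases and then increases, or is monotone. Consequently its maximum over $\{d,\dots,t\}$ is attained at $s=d$ or at $s=t$, which is the claimed bound. If $d=t$ there is nothing to prove.

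The main thing to check is \eqref{eq:convexity} itself, since the paper states it without derivation. I would verify it directly. Write $g_{n,r}(s)=s\binom{s}{r-1}+\binom{n-s}{r}$.

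For the second term, two applications of Pascal's rule give
\[
\Delta^2\tbinom{n-s}{r}=\tbinom{n-s-2}{r-2}\ge 0.
\]

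For the first term, use $s\binom{s}{r-1}=r\binom{s+1}{r}-\binom{s}{r-1}$, which follows from $(s+1)\binom{s}{r-1}=r\binom{s+1}{r}$. Then its second difference is
\[
r\tbinom{s+1}{r-2}-\tbinom{s}{r-3},
\]
and I would check that this equals $r\binom{s}{r-2}+(r-1)\binom{s}{r-3}$, which is nonnegative. Boundary cases with the convention $\binom ab=0$ need care when $s$ is small relative to $r$. Note that nonnegativity alone suffices for the argument, even if the exact closed form is off by a harmless term.
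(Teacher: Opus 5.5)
Your proposal is correct and follows the paper's own route: apply \cref{thm:weak-extremal}(i) to the weakly feasible parameter $N_r$ from \cref{lem:clique-weak}, then use the discrete convexity \eqref{eq:convexity} of $g_{n,r}$ to push the maximum to $s=d$ or $s=t$. Your verification of \eqref{eq:convexity} closes with no hedge needed: one Pascal step, $\binom{s+1}{r-2}=\binom{s}{r-2}+\binom{s}{r-3}$, turns $r\binom{s+1}{r-2}-\binom{s}{r-3}$ into exactly $r\binom{s}{r-2}+(r-1)\binom{s}{r-3}$, and every identity you use holds for all $s\ge0$ under the stated convention.
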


We next give spectral applications. Let $A(G)$ and $D(G)$ be the adjacency matrix and the diagonal degree matrix of $G$, respectively. The \emph{spectral radius} $\rho(G)$ is the largest eigenvalue of $A(G)$. The \emph{signless Laplacian spectral radius} $q(G)$ is the largest eigenvalue of $Q(G)=D(G)+A(G)$. For $\lambda\in \{\rho,q\}$, we use $\spex_{\lambda}$ for the corresponding extremal values, and $\SPEX_{\lambda}$ for the corresponding extremal families.

Motivated by Erd\H{o}s's theorem, Li and Ning \cite[Problems~1 and~2]{LN2016} formulated the corresponding extremal questions for $\rho$ and $q$. In the present notation, the corresponding graph problems, restricted to ranges in which the admissible classes are nonempty, are as follows.

\begin{problem}[\rm{cf. Li and Ning \cite[Problems 1 and 2]{LN2016}}]\label{prob:spectral-general}
For $k\ge1$, determine $\spex_\rho(n,C_n;\delta\ge k)$ and $\spex_q(n,C_n;\delta\ge k)$ when $n\ge2k+1$, and determine $\spex_\rho(n,P_n;\delta\ge k)$ and $\spex_q(n,P_n;\delta\ge k)$ when $n\ge2k+2$.
\end{problem}

If $n\leq 2k$, then $\delta(G)\geq k\geq n/2$, so $G$ is Hamiltonian by Dirac’s theorem. So
the lower bound $n\ge2k+2$ is left for the class in the path problem to be nonempty. Indeed, if $n=2k+1$ and $\delta(G)\ge k$, then $G\vee K_1$ has order $2k+2$ and minimum degree at least $k+1$. Dirac's theorem gives a Hamilton cycle in $G\vee K_1$, and deleting the added vertex gives a Hamilton path in $G$.

For the adjacency spectral Hamilton cycle problem, Fiedler and Nikiforov \cite{FN2010} settled the case $k=1$ and characterized the extremal graph as $H_{n,1}=K_1\vee(K_1\cup K_{n-2})$. Ning and Ge \cite{NG2015} established the case $k=2$ for $n\ge14$. Chen, Hou, and Qian \cite{CHQ2018} subsequently improved the order range and obtained corresponding signless Laplacian conditions. For general $k$, Li and Ning \cite{LN2016} obtained the following two results from a stability form of Erd\H{o}s's theorem.

\begin{theorem}[{Li and Ning \cite[Theorem~1.5]{LN2016}}]\label{thm:LN-rho}
The following statements hold.
\begin{enumerate}[label=(\roman*)]
\item If $k\ge1$ and
$n\ge\max\{6k+5,(k^2+6k+4)/2\}$, then
\[
\SPEX_\rho(n,C_n;\delta\ge k)=\{H_{n,k}\}.
\]
\item If $k\ge0$ and
$n\ge\max\{6k+10,(k^2+7k+8)/2\}$, then
\[
\SPEX_\rho(n,P_n;\delta\ge k)=\{L_{n,k+1}\}.
\]
\end{enumerate}
\end{theorem}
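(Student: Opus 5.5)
The plan is to derive both parts of \cref{thm:LN-rho} from \cref{ex-P-CP} with $\mathcal P=\rho$, which is feasible by \cref{lem:basic-feasible}. For part (i) we have $n\ge 6k+5\ge 2k+1$, so \cref{ex-P-CP}(i) gives
\[
\SPEX_\rho(n,C_n;\delta\ge k)=\{H_{n,s}: k\le s\le t,\ \rho(H_{n,s})=\max_{k\le j\le t}\rho(H_{n,j})\},
\]
where $t=\lfloor(n-1)/2\rfloor$. It therefore suffices to show that $\rho(H_{n,k})>\rho(H_{n,s})$ for every $k<s\le t$. For part (ii), \cref{ex-P-CP}(ii) (with $k\ge1$; the case $k=0$ reduces to $k=1$, since a graph with an isolated vertex is trivially non-traceable and is dominated via edge additions) reduces the claim to showing $\rho(L_{n,k+1})>\rho(L_{n,s})$ for $k+1<s\le\lfloor n/2\rfloor$.

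For the comparison, write $K_\ell=K_s$, the independent set $I$ of size $s$, and the clique $C$ of size $n-2s$. We get uniform lower and upper estimates.
\begin{itemize}
\item[] \textbf{Lower bound.} $H_{n,k}$ contains $K_{n-k}$, so $\rho(H_{n,k})>n-k-1$. Strictness holds because $H_{n,k}$ is connected and properly contains $K_{n-k}\cup kK_1$.
\item[] \textbf{Upper bound.} For $s\ge k+1$ we bound $\rho(H_{n,s})$ via the equitable quotient matrix
\[
\begin{pmatrix} s-1 & s & n-2s\\ s & 0 & 0\\ s & 0 & n-2s-1\end{pmatrix}.
\]
Its largest root $\rho_s$ satisfies the cubic $f_s(x)=0$. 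We then evaluate $f_s$ at $x=n-k-1$ and show $f_s(x)>0$ for all $x\ge n-k-1$, which gives $\rho_s<n-k-1$.
\end{itemize}

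A simpler route also works. Every vertex of $I$ has degree $s$, and every vertex of the rest has degree at most $n-s-1$. A standard bound, for instance Hong's $\rho\le\sqrt{2m-n+1}$ or the two-degree bound $\rho\le \max_{uv\in E}\sqrt{d_ud_v}$, gives
\[
\rho(H_{n,s})\le\max\{\sqrt{s(n-s-1)},\,n-s-1\}.
\]
For $s\ge k+1$ the term $n-s-1\le n-k-2$ is fine. The term $\sqrt{s(n-s-1)}$ is maximized at $s=t\approx n/2$, where it is about $n/2$. We need $n/2<n-k-1$, i.e.\ $n>2k+2$, which holds, though the exact constants must be checked. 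Edge-based bounds like Hong's may be too weak near $s=t$, where $e(H_{n,t})$ is large. The product-of-degrees bound is cleaner: on edges inside $K_s\cup C$ the product is at most $(n-1)(n-s-1)$. That is too weak, so I would fall back to the quotient-matrix cubic.

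The main obstacle is the middle range of $s$, where neither crude bound is below $n-k-1$. Here I would expand $f_s(n-k-1)$ as a polynomial in $s$ and $n$, and show positivity using $n\ge(k^2+6k+4)/2$. This hypothesis is exactly the kind of condition such a computation produces, which indicates the intended route. I would also check monotonicity of $f_s$ beyond $n-k-1$ by bounding $f_s'$. Part (ii) is the same computation with the quotient matrix of $L_{n,s}$, comparing against the lower bound $\rho(L_{n,k+1})>n-k-2$ from $K_{n-k-1}$, under $n\ge(k^2+7k+8)/2$.
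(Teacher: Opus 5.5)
\textbf{Verdict: your route is correct but different from the cited one.} The core computation is missing from the proposal but works out (supplied below), and your ``simpler route'' bound and $k=0$ reduction need fixing.

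\textbf{Comparison with the source.} The paper does not prove this statement; it quotes it from \cite{LN2016}. There it comes from the edge-stability form of Erd\H{o}s's theorem, which is where $n\ge 6k+5$ comes from. That is combined with a bound on $\rho$ in terms of $e(G)$ and $\delta(G)$, converting $\rho(G)>n-k-1$ into $e(G)>\binom{n-k-1}{2}+(k+1)^2$; this conversion is where $2n\ge k^2+6k+4$ enters. Part (ii) is analogous via $G\vee K_1$. Your route specializes \cref{ex-P-CP} to $\rho$ via \cref{lem:basic-feasible} and then compares spectral radii inside the finite family. It gives up Li--Ning's structural information (that $G$ lies in one of two specific graphs) in exchange for a one-variable comparison, and it needs weaker hypotheses. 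However, you stopped exactly where the proof lives, and your forecast of that step is off.

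\textbf{The missing computation.} With $c=n-2s$, your quotient matrix has characteristic polynomial $f_s(x)=x(x-s+1)(x-c+1)-s^2(x-c+1)-csx$. Substituting gives
\[
f_s(n-k-1)=(n-k-1)(n-k)(s-k)-s^2(2s-k).
\]
This is concave in $s$ on $[k+1,t]$, so there is no middle-range obstacle and only the endpoints matter.
\begin{itemize}
\item At $s=k+1$ you need $(n-k)(n-k-1)>(k+1)^2(k+2)$. This already follows from $n\ge(k^2+6k+4)/2$, and in fact needs only $n$ of order $k^{3/2}$.
\item At $s=t$ with $n=2t+1$, the value is $(2t-k)\bigl(t^2-(3k-1)t+k^2-k\bigr)$, which is positive for $t\ge 3k+2$, i.e.\ under $n\ge 6k+5$. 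For even $n$ the value is larger.
\end{itemize}
No derivative bound is needed to conclude $\rho(H_{n,s})<n-k-1$. For $c\ge2$, $f_s(0)=s^2(c-1)>0>-cs(c-1)=f_s(c-1)$; for $c=1$, $f_s=x\bigl(x^2-(s-1)x-s(s+1)\bigr)$. In both cases the two smaller roots are at most $c-1<n-k-1$.

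For paths, the same substitution at $x=n-k-2$ gives $(n-k-2)(n-k-1)(s-1-k)-s(s-1)(2s-2-k)$, handled identically. So the quadratic hypotheses are an artifact of the edge-counting route, not something your computation produces.

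\textbf{Two corrections.}
\begin{itemize}
\item Your ``simpler route'' is false, not merely weak. The vertices of the central $K_s$ have degree $n-1$, not at most $n-s-1$. For example, $\rho(H_{2t+1,t})=\frac{t-1+\sqrt{5t^2+2t+1}}{2}\approx 1.618t$, which exceeds your displayed bound $t$. Delete that paragraph.
\item For $k=0$, \cref{ex-P-CP}(ii) does not apply, and $L_{n,1}=K_1\cup K_{n-1}$ is disconnected. So you have neither (P1) strictness nor $\rho(L_{n,1})>n-2$. Argue directly instead. If $G$ has an isolated vertex $v$, then $\rho(G)=\rho(G-v)\le n-2$, with equality only for $G=K_1\cup K_{n-1}$. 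Otherwise $\delta(G)\ge 1$, and the $k=1$ case gives $\rho(G)\le\max_{s\ge 2}\rho(L_{n,s})$. The $k=0$ instance of the path computation shows this maximum is strictly below $n-2$.
\end{itemize}
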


\begin{theorem}[{Li and Ning \cite[Theorem~1.8]{LN2016}}]\label{thm:LN-q}
The following statements hold.
\begin{enumerate}[label=(\roman*)]
\item If $k\ge1$ and
$n\ge\max\{6k+5,(3k^2+5k+4)/2\}$, then
\[
\SPEX_q(n,C_n;\delta\ge k)=\{H_{n,k}\}.
\]
\item If $k\ge0$ and
$n\ge\max\{6k+10,(3k^2+9k+8)/2\}$, then
\[
\SPEX_q(n,P_n;\delta\ge k)=\{L_{n,k+1}\}.
\]
\end{enumerate}
\end{theorem}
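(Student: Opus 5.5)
We would derive this from \cref{ex-P-CP}, applied to the parameter $q$, which is feasible by \cref{lem:basic-feasible}. By \cref{ex-P-CP}(i), every graph in $\SPEX_q(n,C_n;\delta\ge k)$ is of the form $H_{n,s}$ with $k\le s\le t:=\lfloor(n-1)/2\rfloor$. So (i) reduces to a purely numerical statement:
\[
q(H_{n,k})>q(H_{n,s})\qquad\text{for every } k+1\le s\le t .
\]
Similarly, by \cref{ex-P-CP}(ii), part (ii) reduces to showing $q(L_{n,k+1})>q(L_{n,s})$ for $k+2\le s\le\lfloor n/2\rfloor$.

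\textbf{Lower bound.} We bound $q(H_{n,k})$ from below by a subgraph comparison. The graph $H_{n,k}$ contains $K_k\vee K_{n-2k}\cong K_{n-k}$ as a proper subgraph, and $H_{n,k}$ is connected. Perron--Frobenius, as in \cref{lem:basic-feasible}, then gives $q(H_{n,k})>2(n-k-1)$. For the path case, $L_{n,k+1}\supseteq K_{n-k-1}$ gives $q(L_{n,k+1})>2(n-k-2)$.

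\textbf{Upper bound.} For the competitors we would use the known edge bound for the signless Laplacian (Das):
\[
q(G)\le \frac{2e(G)}{n-1}+n-2 .
\]
It therefore suffices to show $2e(H_{n,s})\le (n-1)(n-2k)$ for $k+1\le s\le t$. We have $e(H_{n,s})=\binom{n-s}{2}+s^2$, which is a convex quadratic in $s$. Hence its maximum over $[k+1,t]$ is attained at an endpoint, and only two cases need checking.

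At $s=k+1$, a direct expansion gives
\[
(n-1)(n-2k)-2e(H_{n,k+1})=2n+2k-(k+1)(3k+4),
\]
which is positive exactly when $n>(3k^2+5k+4)/2$. This is precisely the hypothesis, which suggests that this is the intended route.

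At $s=t\approx n/2$, we have $e\approx 3n^2/8$ while $(n-1)(n-2k)\approx n^2-2kn$. The required inequality then follows comfortably from $n\ge 6k+5$. I would verify this with the exact floors, treating $n$ odd and $n$ even separately.

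Combining the two bounds gives $q(H_{n,s})\le 2(n-k-1)<q(H_{n,k})$ for all $s\ge k+1$, so $\SPEX_q(n,C_n;\delta\ge k)=\{H_{n,k}\}$.

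\textbf{Path case.} The path case runs identically. Here $e(L_{n,s})=\binom{s-1}{2}+(s-1)(n-s+1)+\binom{n-2s+1}{2}$, again convex in $s$. The endpoint check at $s=k+2$ against $(n-1)(n-2k-2)$ should reproduce the threshold $(3k^2+9k+8)/2$, and the other endpoint uses $n\ge 6k+10$.

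\textbf{Main obstacle.} The one genuine subtlety is $k=0$ in (ii), since \cref{ex-P-CP}(ii) assumes $k\ge1$. For $\delta\ge0$ I would rerun the argument of \cref{clm:weak-path-upper}: \cref{lem:path-low-degree} gives $s\ge1$, and \cref{lem:compression} gives $q(G)\le q(L_{n,s})$ for some $1\le s\le\lfloor n/2\rfloor$. For the equality case I would use strictness in (P1) where the graph becomes connected. When the graph stays disconnected, I would instead observe that a disconnected $G$ has $q(G)\le 2(n-2)<q(L_{n,1})=q(K_1\cup K_{n-1})$ unless $G\cong K_1\cup K_{n-1}$. Beyond this, the remaining work is checking the endpoint inequalities with floors, which I expect to be routine but fiddly.
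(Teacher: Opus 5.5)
The paper does not prove this statement; it is quoted from Li and Ning. Their argument uses a stability form of Erd\H{o}s's theorem, valid only for $n\ge 6k+5$, whose exceptional graphs are the spanning subgraphs of $H_{n,k}$ and of $K_1\vee(K_k\cup K_{n-k-1})$. They combine it with an edge bound for $q$ of the kind you use, which is where the quadratic threshold comes from. Your route replaces the stability lemma by \cref{ex-P-CP}, with $q$ feasible by \cref{lem:basic-feasible}. This reduces everything to the explicit family $\{H_{n,s}\}$ (resp.\ $\{L_{n,s}\}$). What remains is a finite computation:
\begin{itemize}
\item the strict subgraph bound $q(H_{n,k})>2(n-k-1)$;
\item the bound $q\le 2e/(n-1)+n-2$;
\item convexity of $e(H_{n,s})=\binom{n-s}{2}+s^2$ in $s$.
\end{itemize}
This is sound, and your computations at $s=k+1$ and $s=k+2$ are correct. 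At $s=k+1$ the difference $2n-3k^2-5k-4$ may equal $0$ under the hypothesis, not be positive; this is harmless since you only need $\le$. You gain by needing no stability theorem and no second exceptional graph. The cost is reliance on the Kelmans-compression machinery behind \cref{ex-P-CP}.

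The step that is wrong as written is the other endpoint. Exactly, for $t=\lfloor (n-1)/2\rfloor$ and $m=\lfloor n/2\rfloor$,
\[
(n-1)(n-2k)-2e(H_{n,t})=
\begin{cases} t(t+1-4k), & n=2t+1,\\ t(t+3-4k)-2k, & n=2t+2,\end{cases}
\]
\[
(n-1)(n-2k-2)-2e(L_{n,m})=
\begin{cases} m(m-4k-1), & n=2m+1,\\ m^2-(4k+3)m+2k+2, & n=2m.\end{cases}
\]
So you need $n\ge 8k-1$ in (i), and $n\ge 8k+6$ in (ii) for even $n$ and $k\ge1$. Your own heuristic says the same: $2e\approx 3n^2/4$ against $n^2-2kn$ already forces $n\gtrsim 8k$. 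This does not follow from $n\ge 6k+5$ or $n\ge 6k+10$. For $k=4$, $n=29$ the first quantity is $-14$, and for $k=3$, $n=28$ the second is $-6$.

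The argument survives only because the hypothesis is a maximum:
\begin{itemize}
\item $(3k-2)(k-3)\ge 0$ gives $(3k^2+5k+4)/2\ge 8k-1$ for $k\ge3$, while $6k+5\ge 8k-1$ for $k\le 3$;
\item $3k^2-7k-4>0$ gives $(3k^2+9k+8)/2\ge 8k+6$ for $k\ge3$, while $6k+10\ge 8k+6$ for $k\le2$.
\end{itemize}
You must use the quadratic term at this endpoint as well. As a byproduct, your route proves (i) under $n\ge\max\{8k-1,(3k^2+5k+4)/2\}$, slightly better than stated for $k=1,2$.

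In the case $k=0$, note that $q(K_1\cup K_{n-1})=2(n-2)$ exactly. So write $q(G)\le 2(n-2)=q(L_{n,1})$, with equality only if $G$ has a $K_{n-1}$ component. The lower bound is then no longer strict, so the competitors $L_{n,s}$, $s\ge2$, need strict inequality in the edge bound. This holds: at $k=0$ the endpoint quantities $2n-8$, $m(m-1)$ and $(m-1)(m-2)$ are positive for $n\ge 10$. Also, a connected non-traceable $G$ cannot give $s=1$ in \cref{lem:path-low-degree}, so \cref{lem:compression} applies there with $b=s-1\ge 1$.
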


\cref{thm:LN-rho,thm:LN-q} solve the corresponding problems under order assumptions that are quadratic in $k$. Nikiforov \cite[Theorem~1.4]{N2016} proved a related sufficient condition in a different, cubic order regime: if $k\ge2$, $n\ge k^3+k+4$, $\delta(G)\ge k$, and $\rho(G)\ge n-k-1$, then $G$ is Hamiltonian unless
$G=K_1\vee(K_{n-k-1}\cup K_k)$ or $G=H_{n,k}$. Further spectral conditions for Hamiltonicity can be found in \cite{LLD2019,LN2016,LNP2018,LLP2018,ZBWL2021}.

Motivated by the earlier spectral extremal and sufficient-condition results in \cite{WYL2019,ZBWL2021-2,XZW2022}, we formulate the Hamilton-connected counterpart.

\begin{problem}\label{prob:spectral-HC}
Let $k\geq 2$ and $n\geq 2k$. Among all non-Hamilton-connected graphs of order $n$ and minimum degree at least $k$, determine the maximum spectral radius and the maximum signless Laplacian spectral radius.
\end{problem}

By \cref{lem:basic-feasible,ex-P-CP}, the quantities in \cref{prob:spectral-general} are given by the following formulas.

\begin{theorem}\label{thm:spectral-cycle-path}
Let $k\ge1$ and let $\lambda\in\{\rho,q\}$.
\begin{enumerate}[label=(\roman*)]
\item If $n\ge2k+1$, then
\[
\spex_\lambda(n,C_n;\delta\ge k)
=\max_{k\le s\le\lfloor(n-1)/2\rfloor}\lambda(H_{n,s}),
\]
and
\[
\SPEX_\lambda(n,C_n;\delta\ge k)
=\left\{H_{n,s}:
\substack{k\le s\le\lfloor(n-1)/2\rfloor,
\lambda(H_{n,s})=\spex_\lambda(n,C_n;\delta\ge k)}
\right\}.
\]
\item If $n\ge2k+2$, then
\[
\spex_\lambda(n,P_n;\delta\ge k)
=\max_{k+1\le s\le\lfloor n/2\rfloor}\lambda(L_{n,s}),
\]
and
\[
\SPEX_\lambda(n,P_n;\delta\ge k)
=\left\{L_{n,s}:
\substack{k+1\le s\le\lfloor n/2\rfloor,
\lambda(L_{n,s})=\spex_\lambda(n,P_n;\delta\ge k)}
\right\}.
\]
\end{enumerate}
\end{theorem}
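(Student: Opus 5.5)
The statement is a direct specialization of \cref{ex-P-CP}, so the plan is to check that its hypotheses hold for $\lambda\in\{\rho,q\}$ and then rewrite its conclusions in spectral notation. By \cref{lem:basic-feasible}, both $\rho$ and $q$ are feasible in the sense of \cref{def:feasible}. That is, they satisfy the strict monotonicity (P1) under adding any nonempty set of nonedges that yields a connected graph, and the Kelmans monotonicity (P2). Both are real-valued isomorphism invariants, so they are admissible graph parameters.

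\textbf{Part (i).} We fix $k\ge1$ and $n\ge2k+1$ and apply \cref{ex-P-CP}(i) with $\mathcal P=\lambda$. By definition, $\spex_\lambda(n,C_n;\delta\ge k)=\ex_\lambda(n,C_n;\delta\ge k)$ and $\SPEX_\lambda(n,C_n;\delta\ge k)=\EX_\lambda(n,C_n;\delta\ge k)$. The value formula and the description of the extremal family then transfer verbatim.

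The range $k\le s\le\lfloor(n-1)/2\rfloor$ is nonempty because $n\ge2k+1$. By \cref{clm:weak-cycle-lower}, each $H_{n,s}$ in this range is non-Hamiltonian with minimum degree at least $k$. So the maximum is attained, and the extremal family is exactly the set of maximizing $H_{n,s}$.

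\textbf{Part (ii).} This is identical, using \cref{ex-P-CP}(ii) with $n\ge2k+2$, so that the range $k+1\le s\le\lfloor n/2\rfloor$ is nonempty. \Cref{clm:weak-path-lower} shows that each $L_{n,s}$ is admissible.

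\textbf{Main obstacle.} No real obstacle remains at this level; the substance sits in \cref{ex-P-CP} and \cref{lem:basic-feasible}. The only point needing care is that feasibility must hold for arbitrary, possibly disconnected, graphs, since intermediate graphs in the compression argument may be disconnected. This is exactly what \cref{lem:basic-feasible} guarantees:
\begin{itemize}
\item the Perron--Frobenius comparison in (P1) is applied only when $G+F$ is connected;
\item the Rayleigh-quotient argument in (P2) uses a nonnegative Perron vector, which exists for every graph.
\end{itemize}
So the hypotheses of \cref{ex-P-CP} are met without further restriction.
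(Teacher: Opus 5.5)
Your proposal is correct and follows the paper's argument exactly: the paper also obtains this theorem by combining the feasibility of $\rho$ and $q$ from \cref{lem:basic-feasible} with \cref{ex-P-CP}, taking $\mathcal P=\lambda$. Your extra checks (that the ranges of $s$ are nonempty, and that $H_{n,s}$ and $L_{n,s}$ are admissible) are already contained in \cref{ex-P-CP}, so they are harmless but not needed.
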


Similarly, \cref{lem:basic-feasible,ex-P-HC} solve \cref{prob:spectral-HC}.

\begin{theorem}\label{thm:spectral-HC}
Let $k\ge2$, $n\ge2k$, and $\lambda\in\{\rho,q\}$. Then
\[
\spex_\lambda(n,\HC;\delta\ge k)
=\max_{k-1\le s\le\lfloor n/2\rfloor-1}\lambda(R_{n,s}),
\]
and
\[
\SPEX_\lambda(n,\HC;\delta\ge k)
=\left\{R_{n,s}:
\substack{k-1\le s\le\lfloor n/2\rfloor-1,
\lambda(R_{n,s})=\spex_\lambda(n,\HC;\delta\ge k)}
\right\}.
\]
\end{theorem}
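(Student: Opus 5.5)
The statement is \cref{thm:spectral-HC}, and I would obtain it as a direct specialization of \cref{ex-P-HC}. The first step is to record that, by \cref{lem:basic-feasible}, both $\rho$ and $q$ are feasible in the sense of \cref{def:feasible}. That is, each satisfies (P1), strict increase under adding a nonempty set of nonedges that produces a connected graph, and (P2), monotonicity under the Kelmans operation $G[x\to y]$. Once this is in place, \cref{ex-P-HC} with $\mathcal P=\lambda$, $k\ge2$ and $n\ge2k$ yields the value of $\ex_\lambda(n,\HC;\delta\ge k)$ as $\max_{k-1\le s\le\lfloor n/2\rfloor-1}\lambda(R_{n,s})$. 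It also identifies the extremal family as exactly those $R_{n,s}$ in this range attaining the maximum.

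The second step is notational. By definition, $\spex_\lambda$ and $\SPEX_\lambda$ are precisely $\ex_\lambda$ and $\EX_\lambda$ for $\lambda\in\{\rho,q\}$, so the two displayed formulas follow verbatim. I would also check that the admissible class is nonempty, which makes the maximum well defined. The range $k-1\le s\le\lfloor n/2\rfloor-1$ is nonempty because $n\ge2k$ gives $\lfloor n/2\rfloor-1\ge k-1$. For each such $s$, the graph $R_{n,s}$ has minimum degree $s+1\ge k$. It is also non-Hamilton-connected: deleting the central $K_{s+1}$ leaves $s+1$ components, and this is incompatible with a Hamilton path between two vertices of the central clique.

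There is essentially no obstacle here beyond what \cref{ex-P-HC} and \cref{lem:basic-feasible} already provide. The only point needing care is that feasibility in \cref{def:feasible} must hold for arbitrary, possibly disconnected, graphs. This is already handled in \cref{lem:basic-feasible}: for (P1) it uses Perron--Frobenius comparison with the irreducible matrix of $G+F$, and for (P2) it uses the Rayleigh quotient identities \eqref{eq:rho-kelmans} and \eqref{eq:q-kelmans}. So I would simply cite these and conclude.
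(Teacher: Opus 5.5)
Your proposal is correct and matches the paper's proof, which obtains this theorem by combining \cref{lem:basic-feasible} (feasibility of $\rho$ and $q$ on arbitrary graphs) with \cref{ex-P-HC}. Your extra checks, that the range of $s$ is nonempty and that each $R_{n,s}$ has minimum degree $s+1\ge k$ and is not Hamilton-connected, repeat \cref{clm:HC-lower} and are harmless.
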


The cited spectral results are stated under order assumptions that are quadratic or cubic in the minimum-degree parameter. The present proofs instead use the graph-operation method of \cite{ALNS2025}. The two procedures in \cref{alg:independent,alg:concentrate} preserve the low-degree structure needed in the argument, although a general Kelmans operation may change the minimum degree. A related use of graph compression appears in the work of Ma and Ning \cite{MN2020}.

Nikiforov \cite{Nikiforov2017} introduced
$A_\alpha(G)=\alpha D(G)+(1-\alpha)A(G)$ for $0\le\alpha\le1$. Let $\rho_\alpha(G)$ denote the largest eigenvalue of $A_\alpha(G)$. We use $\spex_{\rho_\alpha}$ and $\SPEX_{\rho_\alpha}$ for the corresponding extremal value and extremal family.

\begin{lemma}\label{lem:Aalpha-feasible}
For every $\alpha$ with $0\le\alpha<1$, the parameter $\rho_\alpha$ is feasible.
\end{lemma}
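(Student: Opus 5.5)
The plan is to check the two conditions of \cref{def:feasible} for $\rho_\alpha$ directly, mirroring the proof of \cref{lem:basic-feasible}. The key identity is the quadratic form
\[
\bm z^{\mathsf T}A_\alpha(G)\bm z=\sum_{uv\in E(G)}\bigl(\alpha(z_u^2+z_v^2)+2(1-\alpha)z_uz_v\bigr),
\]
valid for every real vector $\bm z$. Since $A_\alpha(G)$ is a symmetric nonnegative matrix, $\rho_\alpha(G)$ equals the maximum of this form over unit vectors, and the maximum is attained at a nonnegative eigenvector.

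\textbf{Condition (P1).} Let $F$ be a nonempty set of nonedges such that $G+F$ is connected.
\begin{enumerate}[label=(\alph*)]
\item Each added edge $uv$ increases the diagonal entries at $u$ and $v$ by $\alpha\ge0$ and the off-diagonal entries by $1-\alpha>0$. Hence $A_\alpha(G)\le A_\alpha(G+F)$ entrywise, and the two matrices differ.
\item Because $\alpha<1$, the off-diagonal pattern of $A_\alpha(G+F)$ is exactly the adjacency pattern of the connected graph $G+F$. So $A_\alpha(G+F)$ is irreducible.
\item The comparison form of the Perron--Frobenius theorem, as cited in the proof of \cref{lem:basic-feasible}, then gives $\rho_\alpha(G+F)>\rho_\alpha(G)$.
\end{enumerate}
This is the only place where $\alpha<1$ is used. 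For $\alpha=1$ the matrix is diagonal and strictness fails.

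\textbf{Condition (P2).} Let $H=G[x\to y]$ and $W=N_G(x)\setminus N_G[y]$. Take a nonnegative unit eigenvector $\bm z$ of $A_\alpha(G)$ for $\rho_\alpha(G)$. As in \cref{lem:basic-feasible}, we may assume $z_y\ge z_x$: otherwise we work with $G[y\to x]$, which is isomorphic to $H$ via the transposition of $x$ and $y$.

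Passing from $G$ to $H$ replaces the edges $xw$ by the edges $yw$ for $w\in W$, and changes nothing else. Comparing the edge terms gives
\[
\bm z^{\mathsf T}A_\alpha(H)\bm z-\bm z^{\mathsf T}A_\alpha(G)\bm z
=\sum_{w\in W}\bigl(\alpha(z_y^2-z_x^2)+2(1-\alpha)z_w(z_y-z_x)\bigr)
=(z_y-z_x)\sum_{w\in W}\bigl(\alpha(z_x+z_y)+2(1-\alpha)z_w\bigr)\ge0.
\]
By the Rayleigh principle, $\rho_\alpha(H)\ge\bm z^{\mathsf T}A_\alpha(H)\bm z\ge\rho_\alpha(G)$. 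This argument needs no connectivity, so it holds for arbitrary graphs as \cref{def:feasible} requires.

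The main points needing care are the following. The Rayleigh maximum is attained by a nonnegative vector even when $G$ is disconnected; this follows by replacing any maximizer with its entrywise absolute value, which can only increase the form. Irreducibility in (P1) must be argued through the off-diagonal support, which is where $1-\alpha>0$ enters. Once these are in place, the lemma follows.
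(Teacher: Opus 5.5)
Your proposal is correct and is essentially the paper's proof. For (P1) you use the Perron--Frobenius comparison, with irreducibility of $A_\alpha(G+F)$ coming from $1-\alpha>0$; for (P2) you use the same Rayleigh-quotient identity the paper records as (2.5), together with the $x\leftrightarrow y$ isomorphism between $G[x\to y]$ and $G[y\to x]$. You also justify that a nonnegative maximizing eigenvector exists when $G$ is disconnected, a detail the paper leaves implicit.
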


\begin{proof}
Let $F$ be a nonempty set of non-edges such that $G+F$ is connected. We have
$A_\alpha(G)\le A_\alpha(G+F)$ entrywise and $A_\alpha(G)\ne A_\alpha(G+F)$, while $A_\alpha(G+F)$ is irreducible because $\alpha<1$. The Perron--Frobenius theorem gives
$\rho_\alpha(G+F)>\rho_\alpha(G)$.

Let $H=G[x\to y]$, put $W=N_G(x)\setminus N_G[y]$, and let $\bm z$ be a nonnegative unit eigenvector of $A_\alpha(G)$ corresponding to $\rho_\alpha(G)$. If $z_y\ge z_x$, then
\begin{align}
&\bm z^{\mathsf T}A_\alpha(H)\bm z-
 \bm z^{\mathsf T}A_\alpha(G)\bm z \notag\\
&\qquad=(z_y-z_x)\sum_{w\in W}
\bigl(\alpha(z_x+z_y)+2(1-\alpha)z_w\bigr)\ge0.
\tag{2.5}\label{eq:Aalpha-kelmans}
\end{align}
Thus, $\rho_\alpha(H)\ge\rho_\alpha(G)$. If $z_y<z_x$, we apply the same calculation to $G[y\to x]$ and use the isomorphism between $G[x\to y]$ and $G[y\to x]$. Therefore, $\rho_\alpha$ satisfies both conditions in \cref{def:feasible}.
\end{proof}

\begin{corollary}\label{cor:Aalpha}
Let $0\le\alpha<1$. The conclusions of \cref{thm:spectral-cycle-path,thm:spectral-HC} remain valid with $\lambda$ replaced by $\rho_\alpha$.
\end{corollary}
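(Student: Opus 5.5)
The statement to prove is \cref{cor:Aalpha}: for $0\le\alpha<1$, the conclusions of \cref{thm:spectral-cycle-path,thm:spectral-HC} hold with $\lambda$ replaced by $\rho_\alpha$. The plan is to get this by direct substitution into the general theorems, exactly as \cref{thm:spectral-cycle-path,thm:spectral-HC} were obtained from \cref{lem:basic-feasible}.

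\cref{lem:Aalpha-feasible} shows that $\rho_\alpha$ is feasible in the sense of \cref{def:feasible} for every $0\le\alpha<1$. The first step is to apply \cref{ex-P-CP}(i) with $\mathcal P=\rho_\alpha$, $k\ge1$ and $n\ge2k+1$. This gives
\[
\spex_{\rho_\alpha}(n,C_n;\delta\ge k)=\max_{k\le s\le\lfloor(n-1)/2\rfloor}\rho_\alpha(H_{n,s}),
\]
and shows that $\SPEX_{\rho_\alpha}(n,C_n;\delta\ge k)$ is exactly the set of maximizing $H_{n,s}$. The second step is to apply \cref{ex-P-CP}(ii) with $n\ge2k+2$; this yields the path statement with the graphs $L_{n,s}$ for $k+1\le s\le\lfloor n/2\rfloor$. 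The third step is to apply \cref{ex-P-HC} with $k\ge2$ and $n\ge2k$; this yields the Hamilton-connected statement with the graphs $R_{n,s}$ for $k-1\le s\le\lfloor n/2\rfloor-1$.

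The hypotheses on $k$ and $n$ in \cref{thm:spectral-cycle-path,thm:spectral-HC} are identical to those of \cref{ex-P-CP,ex-P-HC}. The notation $\spex_{\rho_\alpha}$ and $\SPEX_{\rho_\alpha}$ is by definition $\ex_{\mathcal P}$ and $\EX_{\mathcal P}$ with $\mathcal P=\rho_\alpha$. So the three conclusions transcribe verbatim.

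There is essentially no obstacle beyond what \cref{lem:Aalpha-feasible} already provides. The only point to check is why $\alpha=1$ is excluded. For $\alpha=1$, $A_\alpha=D(G)$, so $\rho_1$ is the maximum degree. This parameter fails the strict inequality in (P1) and the irreducibility argument behind it, and with it the classification of extremal families. The restriction $\alpha<1$ is precisely what makes $A_\alpha(G+F)$ irreducible when $G+F$ is connected.
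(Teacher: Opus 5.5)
Your proposal is correct and matches the paper's proof, which obtains the corollary by combining \cref{lem:Aalpha-feasible} with \cref{ex-P-CP,ex-P-HC}; the hypotheses on $k$ and $n$ in those theorems coincide with those of \cref{thm:spectral-cycle-path,thm:spectral-HC}. Your remark on why $\alpha=1$ is excluded also agrees with the paper's comment that $\rho_1$ is the maximum degree, which fails the strictness required in (P1).
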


\begin{proof}
This follows from \cref{lem:Aalpha-feasible,ex-P-CP,ex-P-HC}.
\end{proof}

The restriction $\alpha<1$ is necessary: when $\alpha=1$, the parameter is the maximum degree and need not increase strictly after the addition of an edge.

%-----------------------------------------------------------------------
\section{Proofs of the main theorems}\label{sec:proofs}

We first state the closure results used below. For an integer $t$, the \emph{$t$-closure} $\cl_t(G)$ is obtained by repeatedly joining two nonadjacent vertices whose current degree sum is at least $t$, until no such pair remains.

\begin{theorem}[Bondy--Chv\'{a}tal \cite{BC1976}; Wong \cite{Wong1997}]\label{thm:closure}
Let $G$ be a graph of order $n$.
\begin{enumerate}[label=(\roman*)]
\item The graph $G$ is Hamiltonian if and only if $\cl_n(G)$ is Hamiltonian.
\item The graph $G$ is Hamilton-connected if and only if $\cl_{n+1}(G)$ is Hamilton-connected.
\end{enumerate}
\end{theorem}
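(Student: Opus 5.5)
The plan is to prove each part by reducing it to a single closure step and then inducting along the sequence of edge additions that produces the closure. Since $G$ is a spanning subgraph of $\cl_t(G)$, and both Hamiltonicity and Hamilton-connectedness are preserved under adding edges, the "only if" directions are trivial. For the "if" directions it suffices to prove a one-edge statement, applied backwards along the closure sequence: if $u,v$ are nonadjacent in a graph $G$ with $d_G(u)+d_G(v)\ge t$ and $G+uv$ has the property, then so does $G$.

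\textbf{Part (i), the one-edge step.} Suppose $d_G(u)+d_G(v)\ge n$ and $G+uv$ is Hamiltonian. If some Hamilton cycle of $G+uv$ avoids $uv$, we are done. Otherwise removing $uv$ gives a Hamilton path $u=v_1v_2\cdots v_n=v$ in $G$. Set
\[
S=\{i:1\le i\le n-1,\ v_1v_{i+1}\in E(G)\},\qquad T=\{i:1\le i\le n-1,\ v_iv_n\in E(G)\}.
\]
Then $|S|=d_G(u)$ and $|T|=d_G(v)$, so $|S|+|T|\ge n$, while $S\cup T\subseteq\{1,\dots,n-1\}$. Hence there is some $i\in S\cap T$; note $i\ne1$ and $i\ne n-1$ because $uv\notin E(G)$. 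Then
\[
v_1v_2\cdots v_iv_nv_{n-1}\cdots v_{i+1}v_1
\]
is a Hamilton cycle of $G$. Induction over the closure sequence, in which every added pair has degree sum at least $n$ in the current graph, finishes (i).

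\textbf{Part (ii), reduction to (i).} The key device is the following. For distinct $x,y\in V(G)$, let $G_{xy}$ be the graph obtained from $G$ by adding a new vertex $w$ adjacent exactly to $x$ and $y$. This graph has order $n+1$. Moreover, $G$ has a Hamilton $x$–$y$ path if and only if $G_{xy}$ is Hamiltonian, since $w$ has degree $2$ and any Hamilton cycle must traverse $x w y$.

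Now let $u,v$ be nonadjacent in $G$ with $d_G(u)+d_G(v)\ge n+1$, and suppose $G+uv$ is Hamilton-connected. Fix $x\ne y$. Since $u,v\ne w$, we have $d_{G_{xy}}(u)+d_{G_{xy}}(v)\ge d_G(u)+d_G(v)\ge n+1=|V(G_{xy})|$. The graph $G_{xy}+uv=(G+uv)_{xy}$ is Hamiltonian, because $G+uv$ has a Hamilton $x$–$y$ path. The one-edge step of (i), applied to $G_{xy}$ of order $n+1$, shows that $G_{xy}$ is Hamiltonian. Hence $G$ has a Hamilton $x$–$y$ path. As $x,y$ were arbitrary, $G$ is Hamilton-connected. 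Inducting along the $(n+1)$-closure sequence then proves (ii).

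\textbf{Main obstacle.} The only delicate point is (ii): handling the Hamilton $x$–$y$ path directly when it uses $uv$ is messy. The auxiliary vertex $w$ avoids this, but one has to check that the degree-sum threshold transfers exactly. It does: the order rises by one, matching the threshold $n+1$, and the degrees of $u$ and $v$ do not decrease.
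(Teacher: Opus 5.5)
Your proof is correct. The paper does not prove this theorem: it quotes it from Bondy--Chv\'atal and Wong and uses it as a black box, so there is no internal argument to compare yours against.

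In part (i) you give the standard Ore-type crossing argument, and the index bookkeeping is right. Because $uv\notin E(G)$, you have $S\subseteq\{1,\dots,n-2\}$ and $T\subseteq\{2,\dots,n-1\}$. So $|S|+|T|\ge n>|S\cup T|$ forces a common index $i$, which is automatically different from $1$ and $n-1$. The backward induction works along any admissible sequence of edge additions, so you do not even need the order-independence of $\cl_t$.

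In part (ii) you avoid a direct crossing analysis of a Hamilton $x$--$y$ path through $uv$. Instead you transfer stability from Hamiltonicity using the gadget $G_{xy}$, and every check goes through:
\begin{itemize}
\item The degree-two vertex $w$ forces any Hamilton cycle to use $xw$ and $wy$.
\item $(G+uv)_{xy}=G_{xy}+uv$.
\item The order rises by exactly one, matching the threshold $n+1$.
\item The degrees of $u$ and $v$ can only grow, so the one-edge lemma of (i) applies verbatim.
\item The orders $n\le 2$ are vacuous, since no nonadjacent pair can reach the threshold.
\end{itemize}

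This device is the same in spirit as what the paper does for paths, where it applies part (i) to $G\vee K_1$ with threshold $n+1$. Your route makes (ii) a corollary of the one-edge lemma for (i), so the paper could be made self-contained at essentially no extra cost.
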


We also use the following low-degree form of P\'{o}sa's theorem.

\begin{theorem}[P\'{o}sa \cite{P1962}]\label{thm:posa}
If a graph $G$ of order $n\ge3$ is non-Hamiltonian, then there is an integer $s$ with $1\le s\le\lfloor(n-1)/2\rfloor$ for which $G$ contains a set $S$ of size $s$ such that $d_G(v)\le s$ for every $v\in S$.
\end{theorem}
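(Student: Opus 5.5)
The plan is to derive this low-degree form of P\'osa's theorem from the Bondy--Chv\'atal closure, \cref{thm:closure}(i), together with the observation that a complete graph on $n\ge3$ vertices is Hamiltonian. Let $G$ be non-Hamiltonian of order $n\ge3$ and set $G'=\cl_n(G)$. By \cref{thm:closure}(i), $G'$ is non-Hamiltonian, so in particular $G'\ne K_n$. By the definition of the closure, every pair of nonadjacent vertices $u,v$ of $G'$ satisfies $d_{G'}(u)+d_{G'}(v)\le n-1$.

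Order the vertices of $G'$ by degree, $d_1\le d_2\le\dots\le d_n$, where $d_i=d_{G'}(v_i)$. The key step is the classical Chv\'atal-type degree argument: I will show there is an index $s<n/2$ with $d_s\le s$. Suppose instead that $d_i>i$ for every $i<n/2$, that is, $d_i\ge i+1$. From this I want to deduce that $G'$ is Hamiltonian, which contradicts the previous paragraph. I would try to do this directly using only the closure property, rather than quoting Chv\'atal's theorem, which is not stated earlier in the paper.

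Among all nonadjacent pairs in $G'$, choose one, $u,v$ with $d(u)\le d(v)$, that maximizes $d(u)+d(v)\le n-1$. Put $s=d(u)$. If $s\ge n/2$, then every vertex has degree at least $n/2$ and degree sums over any nonadjacent pair exceed $n-1$, a contradiction; hence $s<n/2$. Each vertex $w$ nonadjacent to $v$ (other than $v$ itself) satisfies $d(w)\le d(u)=s$ by the maximality of the pair. There are $n-1-d(v)\ge n-1-(n-1-s)=s$ such vertices, counting $u$. This already yields a set $S$ of $s$ vertices, each of degree at most $s$ in $G'$, with $1\le s\le\lfloor(n-1)/2\rfloor$. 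The bound $s\ge1$ can be arranged because otherwise $G'$ has an isolated vertex, and any single vertex of degree $0\le1$ works with $s=1$; the bound $s\le\lfloor(n-1)/2\rfloor$ follows from $s<n/2$. This also makes the sequence argument of the previous paragraph unnecessary.

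Finally, since $G\subseteq G'$ on the same vertex set, $d_G(v)\le d_{G'}(v)\le s$ for every $v\in S$, which gives the conclusion for $G$. I expect the main obstacle to be the bookkeeping in the maximal-pair step. One must check that the vertices nonadjacent to $v$ really number at least $s$, using $d(v)\le n-1-s$. One must also handle the degenerate case $s=0$; there $d(u)=0$ means $u$ is isolated, and taking $S=\{u\}$ with $s=1$ suffices since $n\ge3$.
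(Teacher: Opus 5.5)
The paper gives no proof of this statement; it is quoted from P\'osa's 1962 paper and used as a black box. Your proof is therefore a different, self-contained route.

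You pass to $G'=\cl_n(G)$, which is non-Hamiltonian and hence not $K_n$ by \cref{thm:closure}(i). You take a nonadjacent pair $u,v$ in $G'$ of maximum degree sum with $d(u)\le d(v)$. By maximality, the $n-1-d(v)\ge d(u)=s$ non-neighbours of $v$ all have degree at most $s$. This is the standard textbook derivation of the P\'osa--Chv\'atal conditions from the closure, and it is correct. There is no circularity, since the closure lemma is proved independently of P\'osa's theorem.

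One sentence needs repair. From $s\ge n/2$ it does not follow that every vertex has degree at least $n/2$, because $s$ is only the degree of $u$. The bound you need is immediate anyway: $2s=2d(u)\le d(u)+d(v)\le n-1$, so $s\le\lfloor(n-1)/2\rfloor$.

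The rest is fine. Your treatment of $s=0$ (use $S=\{u\}$ with $s=1\le\lfloor(n-1)/2\rfloor$, valid since $n\ge3$) is correct, as is the transfer back to $G$ via $d_G\le d_{G'}$. The detour through the degree sequence in your second paragraph can simply be dropped.

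Compared with citing P\'osa, your argument costs a few lines but relies only on a tool the paper already uses in \cref{sec:proofs}. It also gives slightly more. The degree bound holds already in $\cl_n(G)$, not just in $G$. Adding $v$ itself yields $n-d(v)\ge s+1$ vertices of degree at most $n-1-s$ (using $s\le n-1-s$). So you recover the failure of Chv\'atal's degree-sequence condition, not only the weaker P\'osa-type statement asked for.
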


The next elementary relation between Hamilton paths and Hamilton cycles is standard; see, for example, \cite[Chapter~10]{BM2008}.

\begin{lemma}\label{lem:join-path-cycle}
Let $G$ be a graph wit order tat least two. Then $G$ has a Hamilton path if and only if $G\vee K_1$ has a Hamilton cycle.
\end{lemma}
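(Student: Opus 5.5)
We prove both implications directly, writing $H=G\vee K_1$ and $v$ for the added vertex, which is adjacent to every vertex of $G$. The hypothesis $|V(G)|\ge2$ guarantees that $H$ has order at least three, so a Hamilton cycle in $H$ is a genuine cycle and the degenerate cases do not arise.

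For the forward direction, let $P=x_1x_2\cdots x_m$ be a Hamilton path of $G$, where $m=|V(G)|\ge2$. Since $v$ is adjacent to both $x_1$ and $x_m$, the sequence $v x_1 x_2\cdots x_m v$ is a closed walk in $H$. It visits each of the $m+1$ vertices of $H$ exactly once, so it is a Hamilton cycle of $H$. The condition $m\ge2$ ensures $x_1\ne x_m$, so the cycle has length $m+1\ge3$ and uses no repeated edge.

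For the converse, let $C$ be a Hamilton cycle of $H$. The vertex $v$ lies on $C$ and has exactly two neighbours $a$ and $b$ on $C$, with $a\ne b$ since the length of $C$ is $m+1\ge3$. Deleting $v$ from $C$ leaves a path from $a$ to $b$. This path contains every vertex of $V(H)\setminus\{v\}=V(G)$, and each of its edges is an edge of $H$ not incident with $v$, hence an edge of $G$. So $C-v$ is a Hamilton path of $G$.

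No step here is a real obstacle. The only point needing care is the order condition in the statement. When $m=1$, the graph $K_1\vee K_1=K_2$ has no cycle at all, although $G=K_1$ trivially has a Hamilton path. The hypothesis $m\ge2$ is exactly what makes $H$ have order at least three, so that both directions go through.
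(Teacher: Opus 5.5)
Your proof is correct. The paper gives no proof of its own for this lemma and cites it as standard (Bondy and Murty, Chapter~10); your apex-vertex argument (close the Hamilton path through the added vertex $v$, and conversely delete $v$ from a Hamilton cycle), together with the observation that $K_1\vee K_1=K_2$ is why order at least two is needed, is exactly that standard argument.
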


\begin{lemma}\label{lem:path-low-degree}
If a graph $G$ of order $n\ge3$ has no Hamilton path, then there is an integer $s$ with $1\le s\le\lfloor n/2\rfloor$ for which $G$ contains a set $S$ of size $s$ such that $d_G(v)\le s-1$ for every $v\in S$.
\end{lemma}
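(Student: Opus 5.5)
We deduce \cref{lem:path-low-degree} from P\'osa's theorem (\cref{thm:posa}) by passing to $G' = G \vee K_1$. Let $G$ have order $n \ge 3$ and no Hamilton path, and let $z$ be the added vertex. By \cref{lem:join-path-cycle}, $G'$ has no Hamilton cycle. Since $G'$ has order $n+1 \ge 4 \ge 3$, \cref{thm:posa} applies to it. We obtain an integer $s$ with $1 \le s \le \lfloor n/2 \rfloor$ and a set $S' \subseteq V(G')$ of size $s$ such that $d_{G'}(v) \le s$ for every $v \in S'$; the upper bound is $\lfloor ((n+1)-1)/2 \rfloor = \lfloor n/2 \rfloor$, exactly the range required.

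The only point needing care is whether $z$ can lie in $S'$. In $G'$ the vertex $z$ has degree $n$, so $z \in S'$ would force $n \le s \le \lfloor n/2 \rfloor$. This is impossible for $n \ge 1$, so $S' \subseteq V(G)$.

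For each $v \in S'$ we have $d_G(v) = d_{G'}(v) - 1 \le s - 1$, since joining $z$ raises every degree of $G$ by exactly one. Taking $S = S'$ gives the required set of size $s$ with $1 \le s \le \lfloor n/2 \rfloor$. I expect no real obstacle: the only steps needing checking are the order hypothesis $n+1 \ge 3$ for \cref{thm:posa}, the exclusion of $z$ from $S'$, and the translation of the range for $s$.
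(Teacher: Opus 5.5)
Your proposal is correct and matches the paper's proof essentially verbatim: pass to $G\vee K_1$, apply \cref{thm:posa} using \cref{lem:join-path-cycle}, exclude the added vertex because its degree $n$ exceeds $\lfloor n/2\rfloor$, and subtract one from each degree. Your extra check that $G\vee K_1$ has order at least $3$ is a small point the paper leaves implicit.
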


\begin{proof}
Put $G^+=G\vee K_1$, and let $w$ be the added vertex. By \cref{lem:join-path-cycle}, the graph $G^+$ is non-Hamiltonian. Applying \cref{thm:posa} to $G^+$, we obtain an integer $s$ with $1\le s\le\lfloor n/2\rfloor$ and a set $S$ of $s$ vertices having degree at most $s$ in $G^+$. Since $d_{G^+}(w)=n>s$, we have $w\notin S$. Therefore, $S\subseteq V(G)$ and $d_G(v)=d_{G^+}(v)-1\le s-1$ for every $v\in S$.
\end{proof}

For Hamilton-connectedness, we use the following result without repeating its proof.

\begin{lemma}[{Ma and Ning \cite[Lemma~2.10]{MN2020}}]\label{lem:HC-low-degree}
Let $G$ be a non-Hamilton-connected graph of order $n$ with $\delta(G)\ge2$. Then there exists an integer $t$ with $2\le t\le\lfloor n/2\rfloor$ such that $G$ contains a set of $t-1$ vertices, each of degree at most $t$.
\end{lemma}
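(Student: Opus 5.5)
The plan is to reduce Hamilton-connectedness to Hamiltonicity of an auxiliary graph of order $n+1$, and then apply P\'osa's theorem (\cref{thm:posa}) to that graph. Since $G$ is not Hamilton-connected, there are distinct vertices $x,y\in V(G)$ with no Hamilton $x$--$y$ path in $G$. Let $G'$ be obtained from $G$ by adding a new vertex $w$ adjacent exactly to $x$ and $y$. Then $G'$ has order $n+1\ge3$.

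\textbf{Step 1: $G'$ is non-Hamiltonian.} Suppose $G'$ has a Hamilton cycle. Since $d_{G'}(w)=2$, the cycle must use both edges $wx$ and $wy$. Deleting $w$ from it then gives a Hamilton $x$--$y$ path in $G$, a contradiction. This holds whether or not $xy\in E(G)$.

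\textbf{Step 2: apply \cref{thm:posa}.} Applied to $G'$, it gives an integer $s$ with $1\le s\le\lfloor n/2\rfloor$ (note $\lfloor((n+1)-1)/2\rfloor=\lfloor n/2\rfloor$) and a set $S\subseteq V(G')$ with $|S|=s$ and $d_{G'}(v)\le s$ for all $v\in S$. We next rule out $s=1$. Every vertex of $G'$ has degree at least $2$: vertices of $G$ satisfy $d_{G'}(v)\ge d_G(v)\ge\delta(G)\ge2$, and $d_{G'}(w)=2$. A one-element set of vertices of degree at most $1$ is therefore impossible, so $s\ge2$. Put $t=s$, so $2\le t\le\lfloor n/2\rfloor$.

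\textbf{Step 3: extract $t-1$ low-degree vertices of $G$.}
\begin{itemize}
\item If $w\in S$, let $S_0=S\setminus\{w\}$.
\item If $w\notin S$, let $S_0$ be $S$ with any one vertex removed.
\end{itemize}
In either case $S_0\subseteq V(G)$ and $|S_0|=t-1$. For each $v\in S_0$ we have $d_G(v)\le d_{G'}(v)\le t$. Thus $S_0$ is the required set.

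The only point that needs care is the lower bound $t\ge2$ in Step 2. This is exactly where the hypothesis $\delta(G)\ge2$ is used, together with $d_{G'}(w)=2$. Everything else is a direct bookkeeping consequence of \cref{thm:posa}.
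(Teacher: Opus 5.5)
Your proof is correct and complete. The paper gives no argument for this lemma: it imports it from Ma and Ning \cite{MN2020} ``without repeating its proof''. So there is no in-paper argument to match, and what you supply is a self-contained derivation from \cref{thm:posa} alone.

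Your key device is a new vertex $w$ with $N_{G'}(w)=\{x,y\}$. Since $d_{G'}(w)=2$, both edges $wx,wy$ must lie on any Hamilton cycle of $G'$, so the missing Hamilton $x$--$y$ path becomes non-Hamiltonicity of a graph of order $n+1$. This exactly parallels the paper's own proof of \cref{lem:path-low-degree} via $G\vee K_1$.

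The bookkeeping is right:
\begin{enumerate}[label=(\roman*)]
\item the order shift turns P\'osa's bound $\lfloor((n+1)-1)/2\rfloor$ into $\lfloor n/2\rfloor$;
\item the hypothesis $\delta(G)\ge2$, together with $d_{G'}(w)=2$, is precisely what excludes $s=1$;
\item deleting $w$ from $S$ (or an arbitrary vertex if $w\notin S$) leaves $t-1$ vertices of $G$ with $d_G\le d_{G'}\le t$.
\end{enumerate}
Your argument also shows implicitly that $n\ge4$, since $2\le s\le\lfloor n/2\rfloor$. This is consistent with $K_3$ being Hamilton-connected.

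Compared with citing the lemma, your route gives self-containedness using only results the paper already states. It would let the paper replace the citation by a three-line proof. A Chv\'atal-type degree-sequence criterion for Hamilton-connectedness would be an alternative. It also controls the high end of the degree sequence, but that information is not used in the proof of \cref{ex-P-HC}.
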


We now give the two Kelmans-operation procedures. The first removes all edges inside a prescribed low-degree set. Its input condition is stated only in terms of vertices preceding $u_j$; the vertex $u_j$ itself is not included in the counting set.

\begin{algorithm}[H]
\caption{Making $S$ independent by Kelmans operations}\label{alg:independent}
\begin{algorithmic}[1]
\Require A graph $G$ with $V(G)=S\cup T$, where
$S=\{u_1,\dots,u_s\}$ and $T=\{v_1,\dots,v_{n-s}\}$, such that
$|N_G(u_j)\cap(T\cup\{u_1,\dots,u_{j-1}\})|\le n-s$ for $2\le j\le s$.
\Ensure A graph $\Gamma$ obtained from $G$ by Kelmans operations such that $S$ is independent.
\State $\Gamma\gets G$
\For{$i=1$ to $s-1$}
  \For{$j=i+1$ to $s$}
    \If{$u_iu_j\in E(\Gamma)$}
      \For{$t=1$ to $n-s$}
        \If{$u_jv_t\notin E(\Gamma)$}
          \State $\Gamma\gets\Gamma[u_i\to v_t]$
          \State \textbf{break}
        \EndIf
      \EndFor
    \EndIf
  \EndFor
\EndFor
\State \Return $\Gamma$
\end{algorithmic}
\end{algorithm}

\begin{lemma}\label{lem:algorithm-one}
\cref{alg:independent} is well defined and terminates. Its output $\Gamma$ has the following properties.
\begin{enumerate}[label=(\roman*)]
\item The set $S$ is independent in $\Gamma$.
\item For every $u_i\in S$, we have $d_\Gamma(u_i)\le d_G(u_i)$.
\item If $u_i$ is the source of at least one Kelmans operation, then $d_\Gamma(u_i)<d_G(u_i)$. Consequently, $d_\Gamma(u_i)=d_G(u_i)$ if and only if $u_i$ is never used as a source.
\end{enumerate}
\end{lemma}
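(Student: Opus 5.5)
The plan is to follow the effect of a single Kelmans operation $\Gamma[u_i\to v_t]$ as it is performed inside \cref{alg:independent}, and to isolate a few invariants from which well-definedness, termination and (i)--(iii) all follow. Termination is immediate, since the algorithm consists of three nested finite \textbf{for}-loops. The substance lies in well-definedness, namely that whenever $u_iu_j\in E(\Gamma)$ is tested, some $v_t\in T$ with $u_jv_t\notin E(\Gamma)$ exists. Once that is settled, note that $u_j\in W=N_\Gamma(u_i)\setminus N_\Gamma[v_t]$, because $u_j\neq v_t$, $u_j\sim u_i$ and $u_j\not\sim v_t$. Hence the operation deletes the edge $u_iu_j$.

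\textbf{Local effect of one operation.} In $\Gamma'=\Gamma[u_i\to v_t]$ with $v_t\in T$, only edges at $u_i$ and $v_t$ change. The vertex $u_i$ loses every edge to $W$ and gains nothing. The vertex $v_t$ gains the edges to $W$. Every other vertex $w$ either keeps its neighbourhood or, if $w\in W$, trades the neighbour $u_i$ for the neighbour $v_t$, so its degree is unchanged. Two consequences follow. First, every new edge has an end in $T$, so no edge inside $S$ is ever created. Second, no vertex of $S$ ever gains degree, and a source $u_i$ loses at least one edge (since $|W|\ge1$) and, never being the target, never regains any.

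\textbf{Key invariant for well-definedness.} Write $c_\Gamma(u_j)=|N_\Gamma(u_j)\cap(T\cup\{u_1,\dots,u_{j-1}\})|$. I will show that during outer iteration $i$, every $j>i$ satisfies $c_\Gamma(u_j)\le c_G(u_j)\le n-s$. Consider an operation with source $u_{i'}$ for some $i'\le i$, and let $u_j$ with $j>i\ge i'$ be affected. Then $u_j$ can only trade $u_{i'}$ for $v_t$. Since $i'<j$, the vertex $u_{i'}$ lies in $\{u_1,\dots,u_{j-1}\}$ and $v_t\in T$, so $c(u_j)$ is unchanged. It can never increase.

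The only worry is a vertex $u_{j'}$ with $j'<i'$, whose count could rise when it trades $u_{i'}$ for $v_t$. However, such indices are never again used as the second coordinate of a pair, because all later pairs $(i'',j)$ have $j>i''\ge i'$. This bookkeeping about which indices remain relevant is the step I expect to need the most care.

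Given the invariant, suppose that at step $(i,j)$ we have $u_iu_j\in E(\Gamma)$ and $u_j$ is adjacent to all $n-s$ vertices of $T$. Then $c_\Gamma(u_j)\ge n-s+1$, a contradiction. So the required $v_t$ exists.

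\textbf{Properties (i)--(iii).} For (i), after step $(i,j)$ the edge $u_iu_j$ is absent, whether deleted at that step or already missing. It never reappears, because no edge inside $S$ is created. Hence $S$ is independent in $\Gamma$. Property (ii) follows from the local effect: vertices of $S$ never gain degree. For (iii), a source loses at least the edge $u_iu_j$ and never regains degree, while a non-source vertex of $S$ only trades neighbours. Therefore $d_\Gamma(u_i)<d_G(u_i)$ exactly when $u_i$ is used as a source.
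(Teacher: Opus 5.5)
Your proposal is correct and follows essentially the same route as the paper: the paper also tracks $c_j(\Gamma)=|N_\Gamma(u_j)\cap(T\cup\{u_1,\dots,u_{j-1}\})|$ and shows it is unchanged by every operation performed before a pair $(i,j)$ is examined, since all earlier sources $u_h$ have $h<j$. It then derives (i)--(iii) from the same local analysis of a single operation $\Gamma[u_i\to v_t]$: every new edge meets $v_t\in T$, the source strictly loses degree, and other vertices of $S$ only trade neighbours. Your remark about indices $j'<i'$ is harmless but not needed, since the invariant is only invoked for $j>i$.
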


\begin{proof}
\begin{claim}\label{clm:algorithm-one-target}
Whenever the pair $(i,j)$ is examined and $u_iu_j\in E(\Gamma)$, there exists $v_t\in T$ such that $u_jv_t\notin E(\Gamma)$.
\end{claim}

Fix $j$. Until the pair $(j-1,j)$ has been examined, every operation already performed has source $u_h$ with $h<j$ and target in $T$. Put
$c_j(H)=|N_H(u_j)\cap(T\cup\{u_1,\dots,u_{j-1}\})|$.
If such an operation affects $u_j$, then $u_j$ loses the neighbor $u_h$ and gains the target vertex in $T$. Hence $c_j$ is unchanged. We infer that, when $(i,j)$ is examined,
$c_j(\Gamma)=c_j(G)\le n-s$. If $u_iu_j\in E(\Gamma)$ and every vertex of $T$ were adjacent to $u_j$, then $c_j(\Gamma)\ge|T|+1=n-s+1$, a contradiction. This proves the claim.

\begin{claim}\label{clm:algorithm-one-degree}
In each operation, the degree of the source vertex in $S$ strictly decreases, while the degree of every other vertex of $S$ is unchanged.
\end{claim}

Suppose that the operation is $\Gamma[u_i\to v_t]$, and let
$W=N_\Gamma(u_i)\setminus N_\Gamma[v_t]$. By the choice of $v_t$, we have $u_j\in W$, so $W\ne\varnothing$. The source $u_i$ loses all $|W|$ incident edges and gains no new incident edge. Thus, its degree strictly decreases. If $u_h\in S\setminus\{u_i\}$ belongs to $W$, then $u_h$ loses the edge $u_hu_i$ and gains the edge $u_hv_t$; otherwise its incident edges are unchanged. This proves the claim.

\begin{claim}\label{clm:algorithm-one-independent}
No operation creates an edge with both ends in $S$, and the edge $u_iu_j$ that triggers an operation is deleted.
\end{claim}

All new edges produced by the operation have the target $v_t\in T$ as one endpoint. Moreover, $u_j\in W$, so $u_iu_j$ is deleted. Thus, the claim follows.

The loops examine finitely many triples $(i,j,t)$, and the explicit break ends the search as soon as one operation has been performed for the current pair.
When the pair $(i,j)$ is processed, the edge $u_iu_j$
is either already absent or is deleted at that stage. By Claim \cref{clm:algorithm-one-target}, it is never recreated. Therefore, $S$ is independent in the output. Statements (ii) and (iii) follow from \cref{clm:algorithm-one-degree}.
\end{proof}

The second procedure concentrates all neighborhoods of $S$ in a fixed initial segment of $T$. It does not assert that $T$ becomes a clique.

\begin{algorithm}[H]
\caption{Concentrating the neighborhoods of $S$}\label{alg:concentrate}
\begin{algorithmic}[1]
\Require A graph $\Gamma$ with $V(\Gamma)=S\cup T$, where
$S=\{u_1,\dots,u_s\}$ is independent and
$T=\{v_1,\dots,v_{n-s}\}$. Put
$r=\max\{d_\Gamma(u_i):1\le i\le s\}$, relabel $S$ so that $d_\Gamma(u_1)=r$, and relabel $T$ so that
$N_\Gamma(u_1)=\{v_1,\dots,v_r\}$.
\Ensure A graph $\Gamma^*$ obtained from $\Gamma$ by Kelmans operations such that $S$ remains independent, every degree in $S$ is preserved, and
$N_{\Gamma^*}(u_i)\subseteq\{v_1,\dots,v_r\}$ for all $i$.
\State $\Gamma^*\gets\Gamma$
\For{$i=1$ to $s$}
  \For{$j=r+1$ to $n-s$}
    \If{$u_iv_j\in E(\Gamma^*)$}
      \For{$t=1$ to $r$}
        \If{$u_iv_t\notin E(\Gamma^*)$}
          \State $\Gamma^*\gets\Gamma^*[v_j\to v_t]$
          \State \textbf{break}
        \EndIf
      \EndFor
    \EndIf
  \EndFor
\EndFor
\State \Return $\Gamma^*$
\end{algorithmic}
\end{algorithm}

\begin{lemma}\label{lem:algorithm-two}
\cref{alg:concentrate} is well defined and terminates. Its output $\Gamma^*$ satisfies the following properties.
\begin{enumerate}[label=(\roman*)]
\item The set $S$ is independent in $\Gamma^*$.
\item We have $d_{\Gamma^*}(u_i)=d_\Gamma(u_i)$ for $1\le i\le s$.
\item We have $N_{\Gamma^*}(u_i)\subseteq\{v_1,\dots,v_r\}$ for $1\le i\le s$.
\end{enumerate}
\end{lemma}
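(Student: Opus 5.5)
The plan is to check invariants of the single operation performed inside the innermost loop, and then argue that the loop order forces conclusion (iii). Throughout, I take ``degree'' to mean degree in the current graph $\Gamma^*$.

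\emph{Step 1: what one operation does.} Suppose the algorithm performs $\Gamma^*\gets\Gamma^*[v_j\to v_t]$, where $j>r$, $t\le r$, $u_iv_j\in E(\Gamma^*)$ and $u_iv_t\notin E(\Gamma^*)$. Put $W=N_{\Gamma^*}(v_j)\setminus N_{\Gamma^*}[v_t]$; then $u_i\in W$. Every edge created has the form $v_tw$ with $w\in W$, so its endpoint $v_t$ lies in $T\cap\{v_1,\dots,v_r\}$.
\begin{itemize}
\item No edge with both ends in $S$ is created, so independence of $S$ is preserved.
\item A vertex $u_h\in S\cap W$ loses exactly the edge $u_hv_j$ and gains exactly $u_hv_t$, and $u_hv_t$ was previously absent by the definition of $W$. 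A vertex of $S\setminus W$ keeps all its incident edges.
\end{itemize}
Hence each operation preserves every degree in $S$. It also never creates an edge $u_hv_{j'}$ with $j'>r$, because the only new edges between $S$ and $T$ end at $v_t$ with $t\le r$. Finally, $u_iv_j$ itself is deleted, since $u_i\in W$.

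\emph{Step 2: well-definedness.} I must show that whenever $u_iv_j\in E(\Gamma^*)$ with $j>r$ is encountered, some $t\le r$ has $u_iv_t\notin E(\Gamma^*)$. By Step 1 and induction on the number of operations, $d_{\Gamma^*}(u_i)=d_\Gamma(u_i)\le r$ at every moment. Also $S$ stays independent, so $N_{\Gamma^*}(u_i)\subseteq T$. Since $u_i$ already has the neighbour $v_j$ outside $\{v_1,\dots,v_r\}$, it has at most $r-1$ neighbours inside that set. So a non-neighbour $v_t$ with $t\le r$ exists, and the inner search succeeds. Termination is immediate: the loops range over finitely many triples $(i,j,t)$, and the \textbf{break} ends the $t$-search after a single operation.

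\emph{Step 3: conclusions (i)--(iii).} Statements (i) and (ii) follow directly from Step 1 by induction over the sequence of operations. For (iii), take any pair $(i,j)$ with $j>r$. When it is examined, the edge $u_iv_j$ is either already absent or deleted by the operation performed at that point. By Step 1, no later operation ever creates an edge between $S$ and $\{v_{r+1},\dots,v_{n-s}\}$, so $u_iv_j$ stays absent until the end. Every pair $(i,j)$ with $1\le i\le s$ and $r<j\le n-s$ is examined, so in the output $N_{\Gamma^*}(u_i)\subseteq\{v_1,\dots,v_r\}$.

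The only delicate point is Step 2. The bound $d_{\Gamma^*}(u_i)\le r$ must hold in the current graph, not just in the input $\Gamma$. This is why degree preservation must be established first, as an invariant of every single operation, before existence of the target $v_t$ is argued.
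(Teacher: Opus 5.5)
Your proposal is correct and takes essentially the same route as the paper. The per-operation invariants you prove are exactly the paper's claims: degrees in $S$ are preserved, no edge is created inside $S$ or from $S$ to $\{v_{r+1},\dots,v_{n-s}\}$, and the triggering edge $u_iv_j$ is deleted. You then obtain the target $v_t$ from the preserved bound $d_{\Gamma^*}(u_i)\le r$, just as the paper does. Your explicit ordering (degree preservation as an inductive invariant before well-definedness) only tidies up the paper's forward reference; it is not a different argument.
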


\begin{proof}
\begin{claim}\label{clm:algorithm-two-target}
If $u_iv_j\in E(\Gamma^*)$ for some $j>r$, then $u_i$ has a nonneighbor in $\{v_1,\dots,v_r\}$.
\end{claim}

As shown in the next claim, all previous operations preserve the degree of $u_i$, and hence $d_{\Gamma^*}(u_i)=d_\Gamma(u_i)\le r$ at the relevant stage. If $u_i$ were adjacent to all of $v_1,\dots,v_r$ and also to $v_j$, then $d_{\Gamma^*}(u_i)\ge r+1$, a contradiction.

\begin{claim}\label{clm:algorithm-two-degree}
An operation $\Gamma^*[v_j\to v_t]$ preserves the degree of every vertex in $S$ and creates no edge inside $S$.
\end{claim}

Put $W=N_{\Gamma^*}(v_j)\setminus N_{\Gamma^*}[v_t]$. For $u_h\in S$, either $u_h\notin W$, in which case no incident edge of $u_h$ changes, or $u_h\in W$, in which case the edge $u_hv_j$ is replaced by $u_hv_t$. Thus, the degree of $u_h$ is preserved. Every new edge is incident with $v_t\in T$, so no edge inside $S$ is created.

\begin{claim}\label{clm:algorithm-two-progress}
The operation triggered by $u_iv_j$ deletes that edge and creates no new edge from $S$ to $\{v_{r+1},\dots,v_{n-s}\}$.
\end{claim}

The chosen vertex $v_t$ is a non-neighbor of $u_i$, so $u_i\in W$ and $u_iv_j$ is replaced by $u_iv_t$. Every new neighbor in $S$ created by the operation is joined to the target $v_t$ with $t\le r$. Every newly created edge incident with $S$ has $v_t$, where $t\leq r$, as its endpoint in $T$. This proves the claim.

By \cref{clm:algorithm-two-target}, the inner search succeeds whenever it is invoked, and the explicit break ensures that exactly one operation is performed for the current pair. The loops are finite. By \cref{clm:algorithm-two-degree}, $S$ remains independent and all degrees in $S$ are preserved. By \cref{clm:algorithm-two-progress}, every edge from $S$ to the terminal segment of $T$ is deleted when it is encountered and is never recreated. We obtain (i)--(iii).
\end{proof}

We combine the two procedures in the following lemma. For integers $n,s,b$ with $1\le b\le n-s$, let
\[
J_n(s,b)=K_b\vee\bigl(sK_1\cup K_{n-s-b}\bigr).
\tag{3.1}\label{eq:Jnsb}
\]

\begin{lemma}\label{lem:compression}
Let $G$ be a graph of order $n$ with a partition $V(G)=S\cup T$, where
$S=\{u_1,\dots,u_s\}$, $|T|=n-s$, and $d_G(u_i)\le b\le n-s$ for every $i$. Assume that the input condition of \cref{alg:independent} holds. Then there are graphs $\Gamma$ and $\Gamma^*$ such that
\begin{enumerate}[label=(\roman*)]
\item $\Gamma$ is obtained from $G$ by \cref{alg:independent};
\item $\Gamma^*$ is obtained from $\Gamma$ by \cref{alg:concentrate};
\item $\Gamma^*$ is a spanning subgraph of $J_n(s,b)$.
\end{enumerate}
Consequently, $\mathcal P(G)\le\mathcal P(J_n(s,b))$ for every feasible or weakly feasible parameter $\mathcal P$.
\end{lemma}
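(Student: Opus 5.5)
Items (i) and (ii) need no new argument: we run \cref{alg:independent} on $G$, which is legitimate because its input condition is assumed, and then run \cref{alg:concentrate} on the output $\Gamma$. \cref{lem:algorithm-one} says that \cref{alg:independent} terminates and that $S$ is independent in $\Gamma$, so $\Gamma$ meets the input requirement of \cref{alg:concentrate}. The substance is item (iii) and the final inequality.

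For (iii), we track the degrees in $S$. By \cref{lem:algorithm-one}(ii), $d_\Gamma(u_i)\le d_G(u_i)\le b$ for every $i$, so the number $r=\max_i d_\Gamma(u_i)$ used in \cref{alg:concentrate} satisfies $r\le b$. By \cref{lem:algorithm-two}, $S$ is independent in $\Gamma^*$ and $N_{\Gamma^*}(u_i)\subseteq\{v_1,\dots,v_r\}\subseteq\{v_1,\dots,v_b\}$. Here we use $b\le n-s=|T|$, so that $\{v_1,\dots,v_b\}$ makes sense as a $b$-subset of $T$.

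We then identify $\{v_1,\dots,v_b\}$ with the central $K_b$ of $J_n(s,b)$, $S$ with $sK_1$, and $\{v_{b+1},\dots,v_{n-s}\}$ with $K_{n-s-b}$. In $J_n(s,b)$ every pair of vertices is adjacent except pairs inside $S$ and pairs between $S$ and $T\setminus\{v_1,\dots,v_b\}$. Neither kind of pair is an edge of $\Gamma^*$. Hence $\Gamma^*\subseteq J_n(s,b)$ as a spanning subgraph.

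For the consequence, we have a chain $G=G_0,G_1,\dots,G_m=\Gamma^*$ in which each step is a Kelmans operation. By (P2) or (W2), $\mathcal P(G)\le\mathcal P(\Gamma^*)$. Next we pass from $\Gamma^*$ to $J_n(s,b)$ by adding the set $F=E(J_n(s,b))\setminus E(\Gamma^*)$ of missing edges, all at once.
\begin{itemize}
\item $J_n(s,b)$ is connected, since $b\ge1$ and the central $K_b$ is joined to everything.
\item So (P1) gives $\mathcal P(\Gamma^*)<\mathcal P(J_n(s,b))$ when $F\neq\varnothing$, and trivially $\mathcal P(\Gamma^*)=\mathcal P(J_n(s,b))$ when $F=\varnothing$.
\item Under (W1) the inequality is non-strict in either case.
\end{itemize}
Adding $F$ in one step is exactly why the definition of feasibility is phrased for sets $F$ with $G+F$ connected: the intermediate graphs $\Gamma^*$ and the $G_i$ may be disconnected, so we avoid adding edges one at a time.

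The main obstacle, as far as this lemma goes, is the degree bound $r\le b$ across the two procedures. This rests on the monotonicity in \cref{lem:algorithm-one}(ii), which is exactly what the paper's design of the procedures secures, since an arbitrary Kelmans operation could increase a degree in $S$. Everything else is bookkeeping on the vertex partition.
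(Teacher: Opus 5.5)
Your proposal is correct and follows the paper's proof essentially step for step: you get $r\le b$ from \cref{lem:algorithm-one}(ii) and the containment $N_{\Gamma^*}(u_i)\subseteq\{v_1,\dots,v_b\}$ from \cref{lem:algorithm-two}, apply (P2)/(W2) along the Kelmans operations, and make a single application of (P1)/(W1) to $F=E(J_n(s,b))\setminus E(\Gamma^*)$, using that $J_n(s,b)$ is connected. Like the paper, you rely on $b\ge1$; this is implicit in the definition of $J_n(s,b)$ but is not listed among the lemma's hypotheses.
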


\begin{proof}
By \cref{lem:algorithm-one}, \cref{alg:independent} gives a graph $\Gamma$ in which $S$ is independent and $d_\Gamma(u_i)\le d_G(u_i)\le b$. Put
$r=\max\{d_\Gamma(u_i):1\le i\le s\}$, so $r\le b$, and relabel the vertices as required by \cref{alg:concentrate}. By \cref{lem:algorithm-two}, the resulting graph $\Gamma^*$ satisfies that $S$ is independent and
$N_{\Gamma^*}(u_i)\subseteq\{v_1,\dots,v_r\}\subseteq\{v_1,\dots,v_b\}$ for every $i$. The Kelmans operations give
\(\mathcal{P}(G) \leq \mathcal{P}(\Gamma^{*})\).
If \(\Gamma^{*}=J_n(s,b)\), there is nothing to prove.
Otherwise, let
$F=E\bigl(J_n(s,b)\bigr)\setminus E(\Gamma^{*}).$
Then \(F\neq\varnothing\), and
\(J_n(s,b)=\Gamma^{*}+F\) is connected since \(b\geq 1\).
Hence condition \textup{(P1)} gives
$\mathcal{P}(\Gamma^{*})<\mathcal{P}\bigl(J_n(s,b)\bigr).$
\end{proof}

The next lemma gives the equality argument that is common to all three main results.

\begin{lemma}\label{lem:equality-rigidity}
In the setting of \cref{lem:compression}, suppose that the output of Algorithm 2 is exactly the specified copy of $J_n(s,b)$. Then \cref{alg:independent} performs no operation, $G=\Gamma$, the set $S$ is independent in $G$, every vertex of $S$ has degree $b$ in $G$, and $G[T]$ is complete.
\end{lemma}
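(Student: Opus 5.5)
We will argue by tracking degrees of $S$ through the two procedures, using \cref{lem:algorithm-one,lem:algorithm-two}. Suppose the output $\Gamma^*$ of \cref{alg:concentrate} is exactly the specified copy of $J_n(s,b)$. In this copy $S$ is the independent set $sK_1$, the set $\{v_1,\dots,v_b\}$ is the central $K_b$, and $\{v_{b+1},\dots,v_{n-s}\}$ spans $K_{n-s-b}$. Hence every vertex of $S$ has degree exactly $b$ in $\Gamma^*$. By \cref{lem:algorithm-two}(ii), degrees in $S$ are preserved by \cref{alg:concentrate}, so $d_\Gamma(u_i)=b$ for all $i$. On the other hand, $d_\Gamma(u_i)\le d_G(u_i)\le b$ by \cref{lem:algorithm-one}(ii) and the hypothesis. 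Therefore $d_\Gamma(u_i)=d_G(u_i)=b$ for every $i$. By \cref{lem:algorithm-one}(iii), no $u_i$ is ever a source of an operation. Every operation of \cref{alg:independent} has some $u_i$ as its source, so no operation is performed and $G=\Gamma$. Consequently $S$ is independent in $G$ by \cref{lem:algorithm-one}(i), and every vertex of $S$ has degree $b$ in $G$.

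It remains to show that $G[T]=\Gamma[T]$ is complete. Here $r=b$ and $N_\Gamma(u_1)=\{v_1,\dots,v_b\}$ after relabeling. The key observation is that an operation $\Gamma^*[v_j\to v_t]$ with $v_j,v_t\in T$ does not change the number of edges inside $T$. The map sending $v_jw$ to $v_tw$ for $w\in W\cap T$ is a bijection between the deleted and created $T$-edges, and $v_t\notin W$ because $W$ excludes $N[v_t]$. Since Kelmans operations also preserve the total number of edges, and $e(S)$ and $e(S,T)$ are fixed (both $S$-degrees and independence are preserved), the count $e(\Gamma^*[T])$ equals $e(\Gamma[T])$. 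In $J_n(s,b)$ the set $T$ induces $K_b\vee K_{n-s-b}=K_{n-s}$, which is complete. So $e(\Gamma[T])=\binom{n-s}{2}$, and $G[T]$ is complete.

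The only delicate point is the edge-count bookkeeping in the second step. I would double-check that a Kelmans operation between two vertices of $T$ moves each $W$-edge to an edge of the same type ($T$–$T$ or $S$–$T$). I would also confirm that the images are genuinely new, that is, the new edges $v_tw$ are non-edges before the operation, so that counts by type are exactly preserved. As a cleaner alternative, one could note that if $G[T]$ were complete then \cref{alg:concentrate} performs no operation at all. However, the counting argument is what proves completeness directly, so I would rely on it.
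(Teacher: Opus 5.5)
Your proof is correct and is essentially the paper's. The squeeze $b=d_\Gamma(u_i)\le d_G(u_i)\le b$ with \cref{lem:algorithm-one}(iii) shows that \cref{alg:independent} does nothing, and completeness of $G[T]$ follows from $e(G[T])=e(\Gamma^*)-sb=\binom{n-s}{2}$, which is exactly your global count; your per-operation bijection is also valid but redundant.

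One correction to your closing aside, which you rightly do not rely on. A complete $G[T]$ does not force \cref{alg:concentrate} to be idle. Take $n=5$, $s=2$, $b=1$, $T=\{v_1,v_2,v_3\}$ a clique, $N_G(u_1)=\{v_1\}$ and $N_G(u_2)=\{v_2\}$. The algorithm performs $[v_2\to v_1]$ and outputs exactly $J_5(2,1)$; this is the case $|N_G(S)|>b$ that the main proofs later exclude via closure.
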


\begin{proof}
\begin{claim}\label{clm:rigidity-no-first}
\cref{alg:independent} performs no operation.
\end{claim}

Since $\Gamma^*=J_n(s,b)$, every vertex of $S$ has degree $b$ in $\Gamma^*$. \cref{alg:concentrate} preserves all degrees in $S$, and hence $d_\Gamma(u_i)=b$ for every $i$. By \cref{lem:algorithm-one},
$b=d_\Gamma(u_i)\le d_G(u_i)\le b$. Thus, $d_G(u_i)=d_\Gamma(u_i)=b$. If $u_i$ had been used as a source, then \cref{lem:algorithm-one}(iii) would give $d_\Gamma(u_i)<d_G(u_i)$, a contradiction. This proves the claim. Consequently, $G=\Gamma$ and $S$ is independent in $G$.

\begin{claim}\label{clm:rigidity-T}
The graph $G[T]$ is complete.
\end{claim}

Kelmans operations preserve the total number of edges, so $e(G)=e(\Gamma^*)$. Since $S$ is independent in $G$ and every vertex of $S$ has degree $b$, we have $e_G(S,T)=sb$. The same equality holds in $\Gamma^*=J_n(s,b)$. Therefore,
\[
e(G[T])=e(G)-sb=e(\Gamma^*)-sb=\binom{n-s}{2}.
\]
Thus, $G[T]$ is complete, proving the claim and the lemma.
\end{proof}

We shall also use the following unified closure calculation.

\begin{lemma}\label{lem:closure-completion}
Let $H$ be a graph of order $m$ with a partition $V(H)=S\cup T$. Suppose that $S$ is independent, $T$ is a clique, $|S|=a$, and $d_H(x)=b$ for every $x\in S$. Put $A=N_H(S)\subseteq T$. Let $\tau$ be an integer such that
$b+m-a\ge\tau$ and $2(m-a)\ge\tau$. If $|A|\ge b+1$, then $\cl_\tau(H)=K_m$.
\end{lemma}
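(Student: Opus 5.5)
We run the $\tau$-closure process in three stages. First we make every vertex of $A$ adjacent to every vertex of $S$. Then we connect the remaining vertices of $T$ to $S$. Finally we fill in $S$. Throughout, degrees only increase along the closure, so once a pair has degree sum at least $\tau$ it stays eligible; we therefore only need to exhibit, at each stage, nonadjacent pairs whose current degree sum is at least $\tau$.

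\emph{Stage 1.} Each $v\in T$ is adjacent to all of $T\setminus\{v\}$, so $d_H(v)\ge m-a-1$. Every $v\in A$ also has a neighbour in $S$, so $d_H(v)\ge m-a$. Take $x\in S$ and $v\in A$ with $xv\notin E(H)$. Then $d(x)+d(v)\ge b+m-a\ge\tau$, so $xv$ is added. Repeating this (degrees only grow), every vertex of $S$ becomes adjacent to all of $A$. In the current graph each $x\in S$ therefore has degree at least $|A|\ge b+1$.

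\emph{Stage 2.} Take $w\in T\setminus A$ and $x\in S$. Now $d(x)\ge b+1$ and $d(w)\ge m-a-1$, so the sum is at least $b+m-a\ge\tau$, and $xw$ is added. After this stage every vertex of $S$ is adjacent to all of $T$. So each $x\in S$ has degree at least $m-a$, and each $v\in T$ has degree $m-1$.

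\emph{Stage 3.} Take two vertices $x,y\in S$. Their degree sum is at least $2(m-a)\ge\tau$, so $xy$ is added. Hence the closure is complete, i.e.\ $\cl_\tau(H)=K_m$.

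\emph{Main obstacle.} Stage 2 is the delicate point. A vertex $w\in T\setminus A$ has degree only $m-a-1$, one less than a vertex of $A$. The deficit is recovered only because Stage 1 raises the degree of $x\in S$ from $b$ to at least $|A|\ge b+1$, and this is exactly where the hypothesis $|A|\ge b+1$ is used. If $T\setminus A=\varnothing$, Stage 2 is vacuous. We should also check that Stage 1 really has pairs to start from. If $x\in S$ is adjacent to every vertex of $A$, then $|A|\le d(x)=b<b+1$, contradicting $|A|\ge b+1$. So each $x\in S$ has a non-neighbour in $A$. Since the closure is well defined and independent of the order in which edges are added, it suffices to exhibit this particular sequence of additions.
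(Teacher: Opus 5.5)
Your proposal is correct and follows the paper's proof essentially step for step, using the same three stages: edges between $S$ and $A$ via $d_H(y)\ge m-a$ for $y\in A$, then edges between $S$ and $T\setminus A$ via $d(x)\ge |A|\ge b+1$, and finally edges inside $S$ via $2(m-a)\ge\tau$. Your extra check that every $x\in S$ has a non-neighbour in $A$ is correct but not needed for the argument.
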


\begin{proof}
\begin{claim}\label{clm:closure-A}
The $\tau$-closure adds every missing edge between $S$ and $A$.
\end{claim}

For $y\in A$, the clique structure of $T$ and the fact that $y$ has a neighbor in $S$ give $d_H(y)\ge m-a$. Hence, for every $x\in S$ nonadjacent to $y$, we have
$d_H(x)+d_H(y)\ge b+m-a\ge\tau$. Degrees only increase during the closure process, and the claim follows.

\begin{claim}\label{clm:closure-TminusA}
After the edges in \cref{clm:closure-A} have been added, the $\tau$-closure adds every edge between $S$ and $T\setminus A$.
\end{claim}

At this stage each vertex of $S$ has degree at least $|A|\ge b+1$. A vertex $z\in T\setminus A$ has no neighbor in $S$ in the original graph and has degree $m-a-1$ in the clique $T$. Therefore,
$(b+1)+(m-a-1)=b+m-a\ge\tau$, and the claim follows.

\begin{claim}\label{clm:closure-S}
The $\tau$-closure finally adds every missing edge inside $S$.
\end{claim}

After \cref{clm:closure-TminusA}, every vertex of $S$ is adjacent to all $m-a$ vertices of $T$. The degree sum of two vertices of $S$ is therefore at least $2(m-a)\ge\tau$. Thus, all missing edges inside $S$ are added. The resulting graph is $K_m$.
\end{proof}

We now prove the main theorems.

\begin{proof}[Proof of \cref{ex-P-CP}]
By Theorem 2.3, we have the extremal parts. 
For the part (i). 
let $M=\max\{\mathcal P(H_{n,s}):k\le s\le\lfloor(n-1)/2\rfloor\}$. By Theorem 2.3, we have the following claims.
 
\begin{claim}\label{clm:cycle-lower}
For every $s$ with $k\le s\le\lfloor(n-1)/2\rfloor$, the graph
$H_s:=H_{n,s}$ has minimum degree $s$ and is non-Hamiltonian.
\end{claim}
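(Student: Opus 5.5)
The claim has two parts: $\delta(H_s)=s$, and $H_s$ is non-Hamiltonian. The plan is to read the degrees directly off the structure $H_s=K_s\vee(sK_1\cup K_{n-2s})$ and then use the standard component-counting obstruction for Hamilton cycles. This is the argument already sketched in \cref{clm:weak-cycle-lower}.

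For the degrees, list the three vertex classes. Each vertex of the central $K_s$ is adjacent to every other vertex, so its degree is $n-1$. Each vertex of the independent set $sK_1$ is adjacent exactly to the $s$ central vertices, so its degree is $s$. Each vertex of $K_{n-2s}$ is adjacent to the other $n-2s-1$ clique vertices and to the $s$ central vertices, so its degree is $n-s-1$.

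Since $s\le\lfloor(n-1)/2\rfloor$, we have $n-2s\ge1$. Hence the clique $K_{n-2s}$ is nonempty, and $n-s-1\ge s$. The minimum degree is therefore exactly $s$, attained on the independent set.

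For non-Hamiltonicity, let $X$ be the vertex set of the central $K_s$. The graph $H_s-X$ is $sK_1\cup K_{n-2s}$. Because $n-2s\ge1$, it has exactly $s+1$ components.

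Now suppose $C$ is a Hamilton cycle in $H_s$. Deleting the $s$ vertices of $X$ from $C$ cuts it into at most $s$ paths. These paths together cover $V(H_s)\setminus X$, and each lies inside a single component of $H_s-X$. So $H_s-X$ has at most $s$ components, contradicting the count of $s+1$. Hence $H_s$ is non-Hamiltonian.

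There is no real obstacle here. The only point needing care is the inequality $n-2s\ge1$: it guarantees the extra component, and it also guarantees that the minimum degree is $s$ rather than something smaller coming from an empty clique. Both follow from the upper bound $s\le\lfloor(n-1)/2\rfloor$.
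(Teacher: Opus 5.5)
Your proof is correct and follows the paper's argument. The paper proves this claim by deferring to \cref{clm:weak-cycle-lower}, where it reads off $\delta(H_{n,s})=s$ and notes that deleting the central $K_s$ leaves $s+1$ components while deleting $s$ vertices from a Hamilton cycle leaves at most $s$ paths; you additionally make explicit the check $n-s-1\ge s$ and the nonemptiness of $K_{n-2s}$, both of which the paper leaves implicit.
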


\begin{claim}\label{clm:cycle-upper}
Every non-Hamiltonian graph $G$ of order $n$ with $\delta(G)\ge k$ satisfies $\mathcal P(G)\le M$.
\end{claim}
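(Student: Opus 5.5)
The claim to prove is \cref{clm:cycle-upper}: every non-Hamiltonian $G$ of order $n$ with $\delta(G)\ge k$ satisfies $\mathcal P(G)\le M$. I will follow the argument of \cref{clm:weak-cycle-upper}, which only used (W1) and (W2). A feasible parameter satisfies both, since (P2) is (W2) and (P1) gives (W1) with strict inequality whenever $F\neq\varnothing$.

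First, $n\ge 2k+1\ge 3$, so \cref{thm:posa} applies. It gives an integer $s$ with $1\le s\le\lfloor(n-1)/2\rfloor$ and a set $S=\{u_1,\dots,u_s\}$ with $d_G(u_i)\le s$ for every $i$. Any vertex of $S$ has degree at least $\delta(G)\ge k$, so $s\ge k$. Hence $s$ lies in the range $k\le s\le\lfloor(n-1)/2\rfloor$ over which $M$ is taken.

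Next, put $T=V(G)\setminus S$ and check the hypotheses of \cref{lem:compression} with $b=s$. We need $b\le n-s$, which holds because $2s\le n-1$. We also need the input condition of \cref{alg:independent}, namely $|N_G(u_j)\cap(T\cup\{u_1,\dots,u_{j-1}\})|\le d_G(u_j)\le s\le n-s$ for $2\le j\le s$. The lemma then yields $\mathcal P(G)\le\mathcal P(J_n(s,s))$. Since $J_n(s,s)=K_s\vee(sK_1\cup K_{n-2s})=H_{n,s}$, we get $\mathcal P(G)\le\mathcal P(H_{n,s})\le M$.

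The only subtle point is that \cref{alg:independent} may lower degrees in $S$ and so change the minimum degree. This does not matter here: the bound needs only the value $s$ coming from P\'osa's set, and that $s$ is already at least $k$ in the original $G$. The remaining ingredient for the extremal family is the equality case. If $\mathcal P(G)=M$, then strict (P1) forces the output $\Gamma^*$ of \cref{alg:concentrate} to be $J_n(s,s)$ exactly. \cref{lem:equality-rigidity} then gives $G=H_{n,s}$, so every extremal graph is some $H_{n,s}$ attaining $M$. Conversely, each such $H_{n,s}$ is admissible by \cref{clm:cycle-lower}.
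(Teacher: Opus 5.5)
Your argument is correct and follows the paper's own route: the paper derives this claim from \cref{clm:weak-cycle-upper}, using P\'osa's theorem (\cref{thm:posa}) to get $s\ge k$ and then \cref{lem:compression} with $b=s$ to get $\mathcal P(G)\le\mathcal P(J_n(s,s))=\mathcal P(H_{n,s})\le M$, exactly as you wrote. One caveat about your closing aside on the equality case, which lies outside this claim: \cref{lem:equality-rigidity} alone does not give $G=H_{n,s}$, because the vertices of $S$ could still have different $s$-element neighbourhoods in the clique $T$; the paper rules out $|N_G(S)|\ge s+1$ separately, via \cref{lem:closure-completion} and \cref{thm:closure}(i), using that $G$ is non-Hamiltonian.
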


\begin{claim}\label{clm:cycle-equality}
If $G\in\EX_{\mathcal P}(n,C_n;\delta\ge k)$, then
$G=H_s$ for some $s$ with $k\le s\le\lfloor(n-1)/2\rfloor$.
\end{claim}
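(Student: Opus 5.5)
Let $G\in\EX_{\mathcal P}(n,C_n;\delta\ge k)$. By \cref{clm:cycle-lower,clm:cycle-upper}, $\mathcal P(G)=M$. The plan is to re-run the compression argument from the proof of \cref{clm:weak-cycle-upper}, now using the strict inequality in (P1).

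\textbf{Step 1: compression.} We may assume $G$ is $n$-closed. Adding a closure edge keeps $G$ non-Hamiltonian by \cref{thm:closure}(i) and does not lower the minimum degree. If $\cl_n(G)\ne G$, some component issue could arise, but $\cl_n(G)$ is connected whenever $G$ has an edge addition that matters. Strict (P1) then gives $\mathcal P(\cl_n(G))>\mathcal P(G)=M$, contradicting \cref{clm:cycle-upper}. (If $G$ is disconnected, add a connecting edge set first, or note that the extremal $G$ is connected. The same strictness argument gives this.)

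Apply \cref{thm:posa} to obtain $s$ and $S=\{u_1,\dots,u_s\}$ with $d_G(u_i)\le s$. As before, $s\ge k$, and the input condition of \cref{alg:independent} holds with $b=s$. By \cref{lem:compression} there is a $\Gamma^*\subseteq J_n(s,s)=H_{n,s}$ with $\mathcal P(G)\le\mathcal P(\Gamma^*)$. If $\Gamma^*\ne H_{n,s}$, strict (P1) gives $\mathcal P(G)<\mathcal P(H_{n,s})\le M$, a contradiction. Hence the output equals the specified copy of $H_{n,s}$.

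\textbf{Step 2: rigidity.} \cref{lem:equality-rigidity} now yields the following: $S$ is independent in $G$, every $u_i$ has degree exactly $s$, and $G[T]$ is complete, where $|T|=n-s$. Let $A=N_G(S)\subseteq T$. If $|A|=s$, then $G=H_{n,s}$ and we are done.

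\textbf{Step 3: ruling out $|A|\ge s+1$ (the main obstacle).} Here the algorithms moved edges, so $G$ differs from $H_{n,s}$. We must show such a $G$ is Hamiltonian. Apply \cref{lem:closure-completion} with $H=G$, $m=n$, $a=b=s$, $\tau=n$. Its hypotheses are $s+n-s=n\ge n$ and $2(n-s)\ge n$, the latter since $s\le\lfloor(n-1)/2\rfloor$. It gives $\cl_n(G)=K_n$, which is Hamiltonian. Then \cref{thm:closure}(i) makes $G$ Hamiltonian, a contradiction. So $|A|=s$ and $G\cong H_{n,s}$ with $k\le s\le\lfloor(n-1)/2\rfloor$.

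The delicate point is making sure the copy of $J_n(s,s)$ produced is exactly the one used in \cref{lem:equality-rigidity}, and that the strict (P1) step is applicable. The latter holds because $J_n(s,b)$ is connected for $b\ge1$, which \cref{lem:compression} already records. With this, Step 1 never needs the closure detour; the closure is used only in Step 3.
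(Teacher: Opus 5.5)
Your proposal is correct and follows the paper's proof step for step: Pósa's theorem and the two Kelmans-operation algorithms, then strict (P1) to force $\Gamma^*=H_{n,s}$, then \cref{lem:equality-rigidity} for rigidity, and finally \cref{lem:closure-completion} with $m=n$, $a=b=s$, $\tau=n$ to rule out $|A|\ge s+1$. The opening reduction to an $n$-closed $G$ is superfluous, as you note yourself, and its stated justification is loose ($\cl_n(G)$ need not be connected unless you first show the extremal $G$ is connected), but nothing later depends on it.
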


Choose $S$ and $s$ as in the proof of \cref{clm:cycle-upper}, and let $\Gamma$ and $\Gamma^*$ be the outputs of the two algorithms. Since $\mathcal P(G)=M$, we have
\[
M=\mathcal P(G)\le\mathcal P(\Gamma^*)
\le\mathcal P(H_s)\le M.
\tag{3.2}\label{eq:cycle-equality-chain}
\]
Hence all inequalities in \eqref{eq:cycle-equality-chain} are equalities. The graph $H_s$ is connected and is obtained from $\Gamma^*$ by adding nonedges. If $\Gamma^*$ were a proper spanning subgraph of $H_s$, condition (P1) would give
$\mathcal P(H_s)>\mathcal P(\Gamma^*)$, contrary to \eqref{eq:cycle-equality-chain}. Therefore, $\Gamma^*=H_s$.

By \cref{lem:equality-rigidity} with $b=s$, \cref{alg:independent} performs no operation, $G=\Gamma$, the set $S$ is independent in $G$, every vertex of $S$ has degree $s$, and $T$ is a clique. Put $A=N_G(S)\subseteq T$. We have $|A|\ge s$. If $|A|=s$, then $N_G(x)=A$ for every $x\in S$, and hence $G=H_s$.

Suppose that $|A|\ge s+1$. Apply \cref{lem:closure-completion} to $G$ with
$m=n$, $a=b=s$, and $\tau=n$. We have
$b+m-a=n$ and $2(m-a)=2(n-s)\ge n$. Thus, $\cl_n(G)=K_n$. By \cref{thm:closure}(i), $G$ is Hamiltonian, a contradiction. We infer that $|A|=s$ and $G=H_s$. This proves the claim. Conversely, \cref{clm:cycle-lower} shows that every $H_{n,s}$ in the stated range whose $\mathcal P$-value equals the displayed maximum is extremal. Hence the asserted equality for the extremal family follows, and the proof of part (i) is complete.
\end{proof}

\begin{proof}[Proof of \cref{ex-P-CP}(ii)]
Put $M=\max\{\mathcal P(L_{n,s}):k+1\le s\le\lfloor n/2\rfloor\}$. By Theorem 2.3, we have the following.

\begin{claim}\label{clm:path-lower}
For every $s$ with $k+1\le s\le\lfloor n/2\rfloor$, the graph
$L_s:=L_{n,s}$ has minimum degree $s-1$ and has no Hamilton path.
\end{claim}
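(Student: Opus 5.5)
The claim has two parts: $L_s=L_{n,s}=K_{s-1}\vee(sK_1\cup K_{n-2s+1})$ has minimum degree $s-1$, and it has no Hamilton path. Both are checked directly from the structure, exactly as in \cref{clm:weak-path-lower} of the proof of \cref{thm:weak-extremal}(ii).

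\emph{Minimum degree.} Each vertex of the independent set $sK_1$ is adjacent exactly to the $s-1$ vertices of the central clique $K_{s-1}$, so it has degree $s-1$. A vertex of $K_{n-2s+1}$ has degree $(n-2s)+(s-1)=n-s-1$. Since $s\le\lfloor n/2\rfloor$, this is at least $s-1$. A central vertex has degree $n-1$. Hence $\delta(L_s)=s-1$, and since $s\ge k+1$ we get $\delta(L_s)\ge k$. The only point needing care is the component $K_{n-2s+1}$: because $s\le\lfloor n/2\rfloor$, we have $n-2s+1\ge1$, so this component is nonempty.

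\emph{Non-traceability.} Delete the $s-1$ central vertices. What remains has $s+1$ components: the $s$ isolated vertices and the nonempty clique $K_{n-2s+1}$. Now suppose $L_s$ had a spanning path $P$. Removing $s-1$ vertices from $P$ leaves at most $(s-1)+1=s$ subpaths. Each component of $L_s-K_{s-1}$ is a union of some of these subpaths, so $L_s-K_{s-1}$ has at most $s$ components. This contradicts the count of $s+1$, so $L_s$ has no Hamilton path.

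An alternative route is through \cref{lem:join-path-cycle}: $L_s\vee K_1$ contains $H_{n+1,s}$ with the same independent set and cut, and the identical component count shows it is non-Hamiltonian. The direct count above is simpler, and no step of it is a real obstacle.
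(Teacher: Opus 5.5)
Your proof is correct and is essentially the paper's argument (given in the proof of Theorem~2.3 and invoked here): the direct degree count, plus the observation that deleting the central $K_{s-1}$ leaves $s+1$ components while deleting $s-1$ vertices from a spanning path leaves at most $s$ segments. In your aside, $L_s\vee K_1$ is in fact equal to $H_{n+1,s}$, not merely a supergraph of it.
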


\begin{claim}\label{clm:path-upper}
Every non-traceable graph $G$ of order $n$ with $\delta(G)\ge k$ satisfies $\mathcal P(G)\le M$.
\end{claim}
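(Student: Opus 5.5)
The statement to prove is Claim~\ref{clm:path-upper}: every non-traceable graph $G$ of order $n$ with $\delta(G)\ge k$ satisfies $\mathcal P(G)\le M$. I would not argue from scratch but reproduce the argument of \cref{clm:weak-path-upper} in the proof of \cref{thm:weak-extremal}(ii), since a feasible parameter is in particular weakly feasible.

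\textbf{Step 1: a low-degree set.} Since $n\ge 2k+2\ge4$, \cref{lem:path-low-degree} applies to $G$. It gives an integer $s$ with $1\le s\le\lfloor n/2\rfloor$ and a set $S=\{u_1,\dots,u_s\}$ with $d_G(u_i)\le s-1$ for every $i$. Because $\delta(G)\ge k$, we get $k\le s-1$, so $s\ge k+1$. Thus $s$ lies in the index range defining $M$.

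\textbf{Step 2: check the hypotheses of \cref{lem:compression}.} Put $T=V(G)\setminus S$ and $b=s-1$.
\begin{itemize}
\item $b\ge k\ge1$.
\item $s\le n/2$ gives $b\le s\le n-s$.
\item For $2\le j\le s$, we have $|N_G(u_j)\cap(T\cup\{u_1,\dots,u_{j-1}\})|\le d_G(u_j)\le s-1\le n-s$, which is the input condition of \cref{alg:independent}.
\end{itemize}

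\textbf{Step 3: compress and compare.} Running the two algorithms gives $\Gamma^*$, a spanning subgraph of
\[
J_n(s,s-1)=K_{s-1}\vee\bigl(sK_1\cup K_{n-2s+1}\bigr)=L_{n,s}.
\]
Condition (P2) handles every Kelmans operation, so $\mathcal P(G)\le\mathcal P(\Gamma^*)$. Condition (P1) handles the final edge addition, which yields the connected graph $L_{n,s}$. This gives $\mathcal P(G)\le\mathcal P(L_{n,s})\le M$.

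The only point needing care is the range bookkeeping. Specifically, we need $s\ge k+1$, so that $L_{n,s}$ appears in the maximum defining $M$, and $b=s-1\ge1$, so that $J_n(s,b)$ is connected and (P1) applies. Both follow directly from $\delta(G)\ge k\ge1$, so I expect no real obstacle.
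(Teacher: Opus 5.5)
Your proposal is correct and follows the paper's own route: the paper proves this claim by appeal to \cref{thm:weak-extremal}(ii), whose \cref{clm:weak-path-upper} is your argument step for step (apply \cref{lem:path-low-degree}, then \cref{lem:compression} with $b=s-1$, and identify $J_n(s,s-1)=L_{n,s}$). Your explicit checks that feasible implies weakly feasible, that $n\ge 3$, and that $1\le b\le n-s$ spell out bookkeeping the paper leaves implicit.
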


\begin{claim}\label{clm:path-equality}
If $G\in\EX_{\mathcal P}(n,P_n;\delta\ge k)$, then
$G=L_s$ for some $s$ with $k+1\le s\le\lfloor n/2\rfloor$.
\end{claim}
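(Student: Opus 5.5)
We prove \cref{clm:path-equality} by following the proof of \cref{clm:cycle-equality}, with $b=s-1$ in place of $b=s$.

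\textbf{Step 1 (the compression is an equality).} Let $G\in\EX_{\mathcal P}(n,P_n;\delta\ge k)$. As in the proof of \cref{clm:weak-path-upper}, \cref{lem:path-low-degree} gives $s$ and a set $S=\{u_1,\dots,u_s\}$ with $d_G(u_i)\le s-1$. Since $\delta(G)\ge k$, we have $k+1\le s\le\lfloor n/2\rfloor$. The input condition of \cref{alg:independent} holds because $s-1\le n-s$. Let $\Gamma$ and $\Gamma^*$ be the outputs of the two algorithms. By \cref{lem:compression} with $b=s-1$, $\Gamma^*$ is a spanning subgraph of $J_n(s,s-1)=L_s$. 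We therefore get
\[
M=\mathcal P(G)\le\mathcal P(\Gamma^*)\le\mathcal P(L_s)\le M .
\]
Since $s-1\ge k\ge1$, the graph $L_s$ is connected. So if $\Gamma^*\ne L_s$, condition (P1) would give a strict inequality in the middle. Hence $\Gamma^*=L_s$.

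\textbf{Step 2 (rigidity).} \cref{lem:equality-rigidity} with $b=s-1$ now tells us the following: $G=\Gamma$, $S$ is independent in $G$, every $u_i$ has degree exactly $s-1$, and $G[T]$ is complete with $|T|=n-s$. Put $A=N_G(S)\subseteq T$, so $|A|\ge s-1$. If $|A|=s-1$, then every $u_i$ is adjacent to all of $A$, and $G=L_s$, as required.

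\textbf{Step 3 (ruling out $|A|\ge s$).} This is the only genuinely new point, since the closure lemma is stated for cycles. The plan is to pass to $G^+=G\vee K_1$ and use \cref{lem:join-path-cycle}. The graph $G^+$ has order $m=n+1$. The set $S$ is still independent in $G^+$, and each vertex of $S$ now has degree $s$. The set $T'=T\cup\{w\}$ is a clique of size $n-s+1$, and $N_{G^+}(S)=A\cup\{w\}$ has size at least $s+1$.

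Apply \cref{lem:closure-completion} with $a=b=s$ and $\tau=n+1$. The two hypotheses hold:
\[
b+m-a=n+1=\tau, \qquad 2(m-a)=2(n+1-s)\ge n+1 \quad\text{since } s\le n/2 .
\]
So $\cl_{n+1}(G^+)=K_{n+1}$. By \cref{thm:closure}(i), $G^+$ is Hamiltonian, and by \cref{lem:join-path-cycle}, $G$ is traceable. This is a contradiction, so $|A|=s-1$ and $G=L_s$.

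Alternatively, one could apply the closure lemma directly to $G$ with $\tau=n-1$, using that $\cl_{n-1}$ preserves traceability. The join trick is preferable because it uses only the results already stated.

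The main obstacle is therefore Step 3, specifically checking that the clique and degree hypotheses of \cref{lem:closure-completion} survive the passage to $G\vee K_1$. The key observation is that the added vertex $w$ joins $A$ and raises every degree in $S$ by one. The converse direction, that every $L_{n,s}$ attaining $M$ lies in the extremal family, follows from \cref{clm:path-lower}.
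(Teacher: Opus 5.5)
Your proposal is correct and follows the paper's proof essentially step for step. The chain of inequalities with strictness from (P1) forces $\Gamma^*=L_s$; \cref{lem:equality-rigidity} is applied with $b=s-1$; and $|A|\ge s$ is ruled out by applying \cref{lem:closure-completion} to $G\vee K_1$ with $m=n+1$, $a=b=s$, $\tau=n+1$, followed by \cref{thm:closure}(i) and \cref{lem:join-path-cycle}.
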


Choose $S$ and $s$ as in \cref{clm:path-upper}, and let $\Gamma$ and $\Gamma^*$ be the outputs of the two algorithms. As in \eqref{eq:cycle-equality-chain}, extremality gives
\[
M=\mathcal P(G)\le\mathcal P(\Gamma^*)
\le\mathcal P(L_s)\le M.
\tag{3.3}\label{eq:path-equality-chain}
\]
Since $L_s$ is connected, the strictness in (P1) implies $\Gamma^*=L_s$. Applying \cref{lem:equality-rigidity} with $b=s-1$, we obtain $G=\Gamma$, $S$ is independent in $G$, every vertex of $S$ has degree $s-1$, and $T$ is a clique.

Put $A=N_G(S)\subseteq T$. If $|A|=s-1$, then every vertex of $S$ is adjacent to all vertices of $A$, and hence $G=L_s$. Suppose that $|A|\ge s$. Let $G^+=G\vee K_1$, let $w$ be the added vertex, and put $T^+=T\cup\{w\}$. Then $T^+$ is a clique, $S$ is independent, and every vertex of $S$ has degree $s$ in $G^+$. Moreover,
$N_{G^+}(S)=A\cup\{w\}$ has size at least $s+1$.

Apply \cref{lem:closure-completion} to $G^+$ with
$m=n+1$, $a=b=s$, and $\tau=n+1$. We have
$b+m-a=n+1$ and $2(m-a)=2(n+1-s)\ge n+1$. Thus,
$\cl_{n+1}(G^+)=K_{n+1}$. By \cref{thm:closure}(i), the graph $G^+$ is Hamiltonian. It follows from \cref{lem:join-path-cycle} that $G$ has a Hamilton path, a contradiction. Therefore, $|A|=s-1$ and $G=L_s$. This proves the claim. Conversely, \cref{clm:path-lower} shows that every $L_{n,s}$ in the stated range whose $\mathcal P$-value equals the displayed maximum is extremal. Hence the asserted equality for the extremal family follows, and the proof of part (ii) is complete.
\end{proof}

\begin{proof}[Proof of \cref{ex-P-HC}]
Put $M=\max\{\mathcal P(R_{n,s}):k-1\le s\le\lfloor n/2\rfloor-1\}$.

\begin{claim}\label{clm:HC-lower}
For every $s$ with $k-1\le s\le\lfloor n/2\rfloor-1$, the graph
$R_s:=R_{n,s}$ has minimum degree $s+1$ and is not Hamilton-connected.
\end{claim}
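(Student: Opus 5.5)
Two things need checking for $R_s=K_{s+1}\vee(sK_1\cup K_{n-2s-1})$: its minimum degree is $s+1$, and it is not Hamilton-connected. Write $C$ for the central $K_{s+1}$, $I$ for the independent set $sK_1$, and $D$ for the clique $K_{n-2s-1}$. The range $s\le\lfloor n/2\rfloor-1$ gives $n-2s-1\ge 1$, so $D$ is nonempty.

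\emph{Minimum degree.} We compute degrees directly.
\begin{enumerate*}[label=(\alph*)]
\item A vertex of $I$ is adjacent exactly to $C$, so its degree is $s+1$.
\item A vertex of $D$ has degree $(s+1)+(n-2s-2)=n-s-1$.
\item A vertex of $C$ has degree $n-1$.
\end{enumerate*}
Since $s\le\lfloor n/2\rfloor-1$, we have $n-s-1\ge s+1$. Hence $\delta(R_s)=s+1$, and this is at least $k$ because $s\ge k-1$.

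\emph{Not Hamilton-connected.} The plan is a component-counting argument, in the same style as \cref{clm:weak-cycle-lower}. Choose two distinct vertices $x,y\in C$; this is possible since $|C|=s+1\ge k\ge2$. Suppose, for contradiction, that $P$ is a Hamilton path from $x$ to $y$.

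Delete the $s+1$ vertices of $C$ from $P$. Both endpoints of $P$ lie in $C$, so removing these vertices from a path with $C$-endpoints leaves at most $(s+1)-1=s$ subpaths. Each subpath is a connected subgraph of $R_s-C$.

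On the other hand, $R_s-C=sK_1\cup K_{n-2s-1}$ has exactly $s+1$ components, because $D\neq\varnothing$. Every vertex of $R_s-C$ lies on some subpath, and each subpath stays inside a single component. So at least $s+1$ subpaths are needed, contradicting the bound of $s$.

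Hence no Hamilton $x$–$y$ path exists, and $R_s$ is not Hamilton-connected.

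\emph{Main point to watch.} The only delicate step is ensuring that $D$ is nonempty, which is what gives $s+1$ components rather than $s$. This follows from the upper bound on $s$, as noted at the start.
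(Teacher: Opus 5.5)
Your proposal is correct and follows essentially the same argument as the paper: the same direct degree computation, followed by deleting the central $K_{s+1}$ from a putative Hamilton path between two of its vertices and comparing the at most $s$ resulting segments with the $s+1$ components of $R_s-K_{s+1}$. Your explicit check that $n-2s-1\ge 1$, so that the clique part is nonempty and there really are $s+1$ components, is a small detail the paper leaves implicit.
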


The vertices in the independent set have degree $s+1$, and the vertices in $K_{n-2s-1}$ have degree $n-s-1\ge s+1$. Thus, $\delta(R_s)=s+1\ge k$. Let $X$ be the central $K_{s+1}$ and choose distinct vertices $p,q\in X$. The graph $R_s-X$ has $s+1$ components. If a Hamilton $p$--$q$ path existed, then deleting the $s+1$ vertices of $X$ from that path would leave at most $|X|-1=s$ path segments, because both ends of the path lie in $X$. These segments cannot cover all $s+1$ components of $R_s-X$. We infer that $R_s$ is not Hamilton-connected, and hence
$\ex_{\mathcal P}(n,\HC;\delta\ge k)\ge M$.

\begin{claim}\label{clm:HC-upper}
Every non-Hamilton-connected graph $G$ of order $n$ with $\delta(G)\ge k$ satisfies $\mathcal P(G)\le M$.
\end{claim}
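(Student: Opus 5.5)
The plan is to mirror the proofs of \cref{clm:weak-cycle-upper,clm:weak-path-upper}, with \cref{lem:HC-low-degree} playing the role of P\'osa's theorem. Let $G$ be a non-Hamilton-connected graph of order $n$ with $\delta(G)\ge k\ge2$. Since $\delta(G)\ge2$, \cref{lem:HC-low-degree} applies. It gives an integer $t$ with $2\le t\le\lfloor n/2\rfloor$ and a set $S$ of $t-1$ vertices, each of degree at most $t$ in $G$. Put $s=t-1$, so that
\[
1\le s\le\lfloor n/2\rfloor-1,\qquad |S|=s,\qquad d_G(v)\le s+1 \text{ for all } v\in S.
\]
Because $\delta(G)\ge k$, some vertex of $S$ has degree at least $k$. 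Hence $s+1\ge k$, that is, $s\ge k-1$, and $s$ lies in the index range defining $M$.

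Next I would invoke \cref{lem:compression} with $b=s+1$. Write $S=\{u_1,\dots,u_s\}$ in any order and $T=V(G)\setminus S$, so $|T|=n-s$. The hypotheses to check are as follows.
\begin{enumerate*}[label=(\alph*)]
\item $1\le b\le n-s$: from $s\le\lfloor n/2\rfloor-1$ we get $2s+1\le n-1$, hence $s+1\le n-s$.
\item $d_G(u_i)\le b$ for every $i$: this holds by the choice of $S$.
\item The input condition of \cref{alg:independent}: for $2\le j\le s$ we have $|N_G(u_j)\cap(T\cup\{u_1,\dots,u_{j-1}\})|\le d_G(u_j)\le s+1\le n-s$.
\end{enumerate*}
\cref{lem:compression} then yields $\mathcal P(G)\le\mathcal P(J_n(s,s+1))$.

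By \eqref{eq:Jnsb}, $J_n(s,s+1)=K_{s+1}\vee(sK_1\cup K_{n-2s-1})=R_{n,s}$. The clique part has order $n-2s-1\ge1$, so this is the genuine graph of \eqref{eq:canonical-graphs}. Therefore $\mathcal P(G)\le\mathcal P(R_{n,s})\le M$, which proves the claim.

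I expect no serious obstacle. The only delicate points are the index shift between the $t$ of \cref{lem:HC-low-degree} and the $s$ in $R_{n,s}$, and verifying the inequality $b=s+1\le n-s$ needed by \cref{lem:compression}; both reduce to $s\le\lfloor n/2\rfloor-1$. The equality case is where the real work lies, presumably via \cref{lem:equality-rigidity} and \cref{lem:closure-completion} with $\tau=n+1$, but it belongs to the next claim.
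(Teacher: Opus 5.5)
Your proposal is correct and matches the paper's proof: apply \cref{lem:HC-low-degree}, set $s=t-1$, get $s\ge k-1$ from $\delta(G)\ge k$, check $s+1\le n-s$ for the input condition of \cref{alg:independent}, and apply \cref{lem:compression} with $b=s+1$ to obtain $\mathcal P(G)\le\mathcal P(J_n(s,s+1))=\mathcal P(R_{n,s})\le M$. You also make explicit two points the paper leaves implicit: $\delta(G)\ge k\ge 2$ is what licenses \cref{lem:HC-low-degree}, and $n-2s-1\ge1$.
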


By \cref{lem:HC-low-degree}, there is an integer $t$ with
$2\le t\le\lfloor n/2\rfloor$ for which $G$ contains a set of $t-1$ vertices, each of degree at most $t$. Put $s=t-1$. Then
$1\le s\le\lfloor n/2\rfloor-1$, and there is a set
$S=\{u_1,\dots,u_s\}$ such that $d_G(u_i)\le s+1$ for all $i$. Since $\delta(G)\ge k$, we have $s\ge k-1$. Put $T=V(G)\setminus S$. The input condition of \cref{alg:independent} follows from
$s+1\le n-s$. Applying \cref{lem:compression} with $b=s+1$, we obtain
$\mathcal P(G)\le\mathcal P(J_n(s,s+1))=\mathcal P(R_s)\le M$. This proves the claim and the extremal-value equality.

\begin{claim}\label{clm:HC-equality}
If $G\in\EX_{\mathcal P}(n,\HC;\delta\ge k)$, then
$G=R_s$ for some $s$ with $k-1\le s\le\lfloor n/2\rfloor-1$.
\end{claim}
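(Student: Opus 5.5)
The plan follows the equality arguments for \cref{clm:cycle-equality,clm:path-equality}, with the Hamilton-connected closure, \cref{thm:closure}(ii), in place of (i). Let $G\in\EX_{\mathcal P}(n,\HC;\delta\ge k)$, so $\mathcal P(G)=M$. Take $s$ and $S=\{u_1,\dots,u_s\}$ exactly as in the proof of \cref{clm:HC-upper}, so $k-1\le s\le\lfloor n/2\rfloor-1$ and $d_G(u_i)\le s+1$. Run \cref{alg:independent,alg:concentrate} to obtain $\Gamma$ and $\Gamma^*$. As in \eqref{eq:cycle-equality-chain}, the Kelmans operations and (P2) give
$$M=\mathcal P(G)\le\mathcal P(\Gamma^*)\le\mathcal P(R_s)\le M.$$
Since $R_s=J_n(s,s+1)$ is connected, the strictness in (P1) forces $\Gamma^*=R_s$. \cref{lem:equality-rigidity} with $b=s+1$ then tells us the following: $G=\Gamma$, $S$ is independent in $G$, every vertex of $S$ has degree exactly $s+1$, and $G[T]$ is complete.

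Put $A=N_G(S)\subseteq T$, so $|A|\ge s+1$. If $|A|=s+1$, every $u_i$ is adjacent to all of $A$. Then $A$ is a clique of size $s+1$ joined completely to $S$ and to $T\setminus A$, and $S$ has no neighbours in $T\setminus A$. Hence $G=K_{s+1}\vee(sK_1\cup K_{n-2s-1})=R_s$, which is what we want.

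It remains to exclude $|A|\ge s+2$. Apply \cref{lem:closure-completion} to $H=G$ with $m=n$, $a=s$, $b=s+1$ and $\tau=n+1$. The first hypothesis holds with equality: $b+m-a=s+1+n-s=n+1$. The second hypothesis needs $2(n-s)\ge n+1$, i.e.\ $s\le (n-1)/2$, which follows from $s\le\lfloor n/2\rfloor-1$. Since $|A|\ge b+1=s+2$, the lemma gives $\cl_{n+1}(G)=K_n$. Because $K_n$ is Hamilton-connected for $n\ge2$, \cref{thm:closure}(ii) shows that $G$ is Hamilton-connected, a contradiction. So $|A|=s+1$ and $G=R_s$.

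For the converse inclusion, \cref{clm:HC-lower} shows that each $R_{n,s}$ in the range is admissible, so every such $R_{n,s}$ with $\mathcal P(R_{n,s})=M$ is extremal. Together with \cref{clm:HC-upper}, this gives the stated formula for the extremal family.

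I expect the only delicate point to be the closure step: the parameters must be checked against both inequalities of \cref{lem:closure-completion} with the stronger threshold $\tau=n+1$. The first holds with no slack, which is exactly why the degree of $S$ in $G$ must be pinned to precisely $s+1$ before the closure lemma is applied.
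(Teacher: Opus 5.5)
Your proposal is correct and matches the paper's proof of this claim step for step. It uses the equality chain with strict (P1) to force $\Gamma^*=R_s$, then \cref{lem:equality-rigidity} with $b=s+1$, then the split on $|A|$ for $A=N_G(S)$. The case $|A|\ge s+2$ is excluded in the same way, via \cref{lem:closure-completion} with $m=n$, $a=s$, $b=s+1$, $\tau=n+1$ and then \cref{thm:closure}(ii).
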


Choose $S$ and $s$ as in \cref{clm:HC-upper}, and let $\Gamma$ and $\Gamma^*$ be the outputs of the two algorithms. Extremality gives
\[
M=\mathcal P(G)\le\mathcal P(\Gamma^*)
\le\mathcal P(R_s)\le M.
\tag{3.4}\label{eq:HC-equality-chain}
\]
Since $R_s$ is connected, condition (P1) implies $\Gamma^*=R_s$. Applying \cref{lem:equality-rigidity} with $b=s+1$, we obtain $G=\Gamma$, $S$ is independent in $G$, every vertex of $S$ has degree $s+1$, and $T$ is a clique.

Put $A=N_G(S)\subseteq T$. If $|A|=s+1$, then every vertex of $S$ is adjacent to all vertices of $A$, and hence $G=R_s$. Suppose that $|A|\ge s+2$. Apply \cref{lem:closure-completion} to $G$ with
$m=n$, $a=s$, $b=s+1$, and $\tau=n+1$. We have
$b+m-a=n+1$ and, since $s\le\lfloor n/2\rfloor-1$,
$2(m-a)=2(n-s)\ge n+1$. Therefore,
$\cl_{n+1}(G)=K_n$. By \cref{thm:closure}(ii), the graph $G$ is Hamilton-connected, a contradiction. Thus, $|A|=s+1$ and $G=R_s$. This proves the claim. Conversely, \cref{clm:HC-lower} shows that every $R_{n,s}$ in the stated range whose $\mathcal P$-value equals the displayed maximum is extremal. Hence the asserted equality for the extremal family follows, and the proof is complete.
\end{proof}

\end{document}